\numberwithin{equation}{section}
\newtheorem{letterthm}{Theorem}
\newtheorem{lettercor}[letterthm]{Corollary}
\newtheorem{thm}{Theorem}[section]
\newtheorem{lem}[thm]{Lemma}
\newtheorem{cor}[thm]{Corollary}
\newtheorem{prop}[thm]{Proposition}
\theoremstyle{definition}
\newtheorem*{claim}{Claim}
\newtheorem*{caseII_1}{Case type ${\rm II_1}$}
\newtheorem*{caseII_infty}{Case type ${\rm II_\infty}$}
\newtheorem*{caseIII}{Case type ${\rm III}$}
\newcommand{\R}{\mathbf{R}}
\newcommand{\C}{\mathbf{C}}
\newcommand{\Z}{\mathbf{Z}}
\newcommand{\F}{\mathbf{F}}
\newcommand{\N}{\mathbf{N}}
\newcommand{\Ad}{\operatorname{Ad}}
\newcommand{\id}{\text{\rm id}}
\newcommand{\Aut}{\operatorname{Aut}}
\newcommand{\rL}{\mathord{\text{\rm L}}}
\newcommand{\rE}{\mathord{\text{\rm E}}}
\newcommand{\tr}{\mathord{\text{\rm tr}}}
\newcommand{\core}{\mathord{\text{\rm c}}}
\newcommand{\Tr}{\mathord{\text{\rm Tr}}}
\newcommand{\Ball}{\mathord{\text{\rm Ball}}}
\newcommand{\ovt}{\mathbin{\overline{\otimes}}}
\newcommand{\AC}{\mathord{\text{\rm AC}}}
\newcommand{\Bic}{\mathord{\text{\rm B}}}
\newcommand{\dpr}{^{\prime\prime}}
\begin{document}

\title[Asymptotic structure of free product von Neumann algebras]{Asymptotic structure of free product \\ von Neumann algebras}

\begin{abstract}
Let $(M, \varphi) = (M_1, \varphi_1) \ast (M_2, \varphi_2)$ be the free product of any $\sigma$-finite von Neumann algebras endowed with any faithful normal states. We show that whenever $Q \subset M$ is a von Neumann subalgebra with separable predual such that both $Q$ and $Q \cap M_1$ are the ranges of faithful normal conditional expectations and such that both the intersection $Q \cap M_1$ and the central sequence algebra $Q' \cap M^\omega$ are diffuse (e.g.\ $Q$ is amenable), then $Q$ must sit inside $M_1$. This result generalizes the previous results of the first named author in \cite{Ho14} and moreover completely settles the questions of maximal amenability and maximal property Gamma of the inclusion $M_1 \subset M$ in arbitrary free product von Neumann algebras.
\end{abstract}

\author{Cyril Houdayer}
\address{Laboratoire de Math\'ematiques d'Orsay\\ Universit\'e Paris-Sud\\ CNRS\\ Universit\'e Paris-Saclay\\ 91405 Orsay\\ FRANCE}
\email{cyril.houdayer@math.u-psud.fr}
\thanks{CH is supported by ANR grant NEUMANN and ERC Starting Grant GAN 637601.}

\author{Yoshimichi Ueda}
\address{Graduate School of Mathematics \\ Kyushu University \\ Fukuoka, 810-8560 \\ JAPAN}
\email{ueda@math.kyushu-u.ac.jp}
\thanks{YU is supported by Grant-in-Aid for Scientific Research (C) 24540214.}

\subjclass[2010]{46L10, 46L54, 46L36}
\keywords{Asymptotic orthogonality property; Free product von Neumann algebras; Popa's deformation/rigidity theory; Type ${\rm III}$ factors; Ultraproduct von Neumann algebras}

\maketitle

\section{Introduction and statement of the main results}

The first class of concrete maximal amenable subalgebras in von Neumann algebras was discovered by Popa in his seminal article \cite{Po83}. He showed that the generator maximal abelian subalgebra $\rL(\Z) = \rL(\langle a \rangle)$ is maximal amenable inside the free group factor $\rL(\F_2) = \rL(\langle a, b\rangle)$. Popa moreover introduced in \cite{Po83} a powerful method, based on the study of central sequences in the ultraproduct framework, to prove that a given amenable von Neumann subalgebra in a finite von Neumann algebra is maximal amenable. This method will be referred to as Popa's {\em asymptotic orthogonality property} in this paper. Popa's maximal amenability result \cite{Po83} for free group factors was recently extended by the first named author in \cite{Ho14} to a large class of free product von Neumann algebras, possibly of type ${\rm III}$. We refer to \cite{Ho14} and the references therein for further results on maximal amenability in the framework of von Neumann algebras. We point out that Boutonnet-Carderi recently introduced in \cite{BC14} a new method, based on the study of central states, to prove that a given amenable von Neumann subalgebra in a finite von Neumann algebra is maximal amenable. Among other things, they obtained concrete examples of maximal amenable von Neumann subalgebras in type ${\rm II_1}$ factors associated with higher rank lattices.

The aim of this paper is to further generalize the recent work of the first named author in \cite{Ho14} and to completely settle the questions of maximal amenability and maximal property Gamma of the inclusion $M_1 \subset M$ arising from an arbitrary free product $(M, \varphi) = (M_1, \varphi_1) \ast (M_2, \varphi_2)$.

We will say that an inclusion of von Neumann algebras $Q \subset M $ is with {\em expectation} if there exists a faithful normal conditional expectation from $M$ onto $Q$. Our first main result is the following optimal {\em Gamma stability} result inside arbitrary free product von Neumann algebras.

\begin{letterthm}\label{thmA}
For each $i \in \{1, 2\}$, let $(M_i, \varphi_i)$ be any $\sigma$-finite von Neumann algebra endowed with a faithful normal state. Denote by $(M, \varphi) = (M_1, \varphi_1) \ast (M_2, \varphi_2)$ the free product. Let $Q \subset M$ be any von Neumann subalgebra with separable predual and with expectation such that  $Q \cap M_1$ is diffuse and with expectation, and furthermore that $Q' \cap M^\omega$ is diffuse. Then we have $Q \subset M_1$.
\end{letterthm}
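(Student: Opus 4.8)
The plan is to follow Popa's deformation/rigidity strategy via the free malleable deformation on the core, combined with an analysis of the relative commutant and central sequences. The first step is to pass to the continuous cores. Using the natural free product decomposition of cores, one writes $\widetilde M = \core_\varphi(M)$ as a crossed product and, because $Q$ and $Q \cap M_1$ are with expectation, one gets corresponding covariant inclusions $\widetilde Q \subset \widetilde M$ and $\widetilde{Q \cap M_1} \subset \widetilde{M_1}$ that are again with expectation; here the diffuseness of $Q \cap M_1$ guarantees that $\widetilde{Q \cap M_1}$ contains a diffuse abelian subalgebra over which one can run a spectral gap / malleable deformation argument. The point of moving to the core is that $\widetilde M$ carries a semifinite trace and a free-product structure $\widetilde M \supset \widetilde{M_1}$ for which the s-malleable deformation $(\alpha_t)$ of Ioana--Peterson--Popa type (or the Ho14 free product deformation) is available.

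The second step is the key dichotomy coming from Popa's intertwining-by-bimodules and the asymptotic orthogonality property. Since $Q' \cap M^\omega$ is diffuse, one produces inside $Q' \cap M^\omega$ a diffuse abelian subalgebra $A$, and then one shows that the deformation $(\alpha_t)$ must converge uniformly on the unit ball of $A$ (this is where the asymptotic orthogonality property, i.e.\ the free independence estimates of \cite{Po83, Ho14}, enters: central sequences that are ``spread out'' relative to $M_1$ are pushed to zero by the free word-length gradient unless they already live in the $M_1$-side). Combining uniform convergence on $A$ with the fact that $A$ commutes with $Q$, a standard transversality/maximality argument forces $(\alpha_t)$ to converge uniformly on the unit ball of $Q$ as well. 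By the now-classical consequence of malleability (Popa's transversality lemma plus a compactness argument), uniform convergence of $\alpha_t$ on $\Ball(Q)$ yields a Popa intertwiner: $Q \preceq_{\widetilde M} \widetilde{M_1}$, and in fact, exploiting rigidity of the relative commutant (the presence of the diffuse central sequences prevents $Q$ from being intertwined into a ``corner'' that meets $M_2$), one upgrades this to the statement that a corner of $Q$ embeds with expectation into $M_1$ inside $M$ itself.

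The third step is to promote the intertwining into an actual containment. Here I would invoke the structure of free products: if $Q \npreceq_M M_1$ were to fail, the malleable/spectral-gap analysis of the previous step already gives a contradiction with diffuseness of $Q' \cap M^\omega$; so $Q$ is amenable-relative or intertwinable into $M_1$, and then using that $Q \cap M_1$ is diffuse and with expectation together with the ``a corner embeds'' conclusion, a maximality argument (in the spirit of \cite[Theorem A]{Ho14}, where one shows the set of projections $p$ with $pQp \subset M_1$ after a unitary conjugation is directed and exhausts the identity) yields $Q \subset M_1$ outright. The descent from the core back to $M$ is routine once the intertwining is equivariant for the modular/trace-scaling action, using that all the algebras in sight are with expectation.

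The main obstacle I expect is the second step in the genuinely type ${\rm III}$ setting: proving that the asymptotic orthogonality property survives when one only has faithful normal \emph{states} (not traces) and works in the ultraproduct $M^\omega$ rather than a tracial ultrapower. One must carefully set up the free independence and conditional expectation estimates in Ueda's free-product-with-amalgamation-over-the-core picture, control the interaction between the modular automorphism groups of $\varphi_1$, $\varphi_2$ and the free word-length filtration, and make sure the central sequence algebra $Q' \cap M^\omega$ (rather than $Q' \cap M$) is what feeds the deformation—this is precisely the technical advance over \cite{Ho14}, and it is where almost all the work will go.
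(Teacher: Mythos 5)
Your overall architecture (pass to the continuous core, run the IPP-type free malleable deformation, intertwine into $\core_\varphi(M_1)$, then descend) is the route the paper takes for \emph{one half} of the proof only, and the proposal has a genuine gap: you never separate the case where $Q$ (or a direct summand of it) is amenable, and for amenable $Q$ your second step cannot work. Popa's spectral gap argument gives uniform convergence of the malleable deformation on the relative commutant $\mathcal Q' \cap (p\mathcal M p)^\omega$ precisely because $\mathcal Q$ has \emph{no amenable direct summand}: one produces almost-central, almost-$\mathcal Q$-invariant vectors in $\rL^2(p\widetilde{\mathcal M}p)\ominus\rL^2(p\mathcal M p)$ and contradicts Connes's characterization of amenability. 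If $Q$ is amenable this step proves nothing, and the asymptotic orthogonality property cannot substitute for it: the AOP (Theorem \ref{theorem-AOP}) is an orthogonality statement about vectors of the form $y(x-\rE_{M_1^\omega}(x))\xi_{\psi^\omega}$ in $\rL^2(M^\omega)$ and says nothing about the rate of convergence of $\theta_t$. In the paper the two hypotheses feed two disjoint mechanisms. For nonamenable summands (Lemma \ref{lemma-nonamenable-case}) the diffuseness of $Q'\cap M^\omega$ is used, via Masuda--Tomatsu's inclusion $\core_{\varphi^\omega}(M^\omega)\subset(\core_\varphi(M))^\omega$ and Lemma \ref{lemma-intertwining-core}, only to verify condition (iii) of Theorem \ref{deformation/rigidity}, i.e.\ that the relative commutant in the ultraproduct core does not embed into $(\rL_\varphi(\R))^\omega$; this is what lets Peterson's transfer bring the convergence down to $\Ball(\mathcal Q)$. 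For amenable summands (Lemma \ref{lemma-amenable-case}) the deformation is abandoned entirely: the AOP is applied directly to a central sequence $u=(u_n)^\omega$ with $\rE_{M_1^\omega}(u)=0$ to force $z=0$, and in type ${\rm III}$ this further requires writing $Q=\rL(\mathcal R)$ for a hyperfinite equivalence relation, intertwining the Cartan subalgebra first, and then using regularity together with Proposition \ref{proposition-control} (2).

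A second, smaller gap: even in the nonamenable case, uniform convergence of $\theta_t$ on $\Ball(\mathcal Q)$ yields only $\mathcal Q z^\perp\preceq_{\mathcal M}\mathcal M_i$ for \emph{some} $i\in\{1,2\}$. Ruling out $i=2$ and placing the intertwiner inside $\mathcal M_1$ uses that the core of $Q\cap M_1$ does not embed into $\rL_\varphi(\R)$ (Lemma \ref{lemma-intertwining-core-bis}, which needs $Q\cap M_1$ diffuse) together with \cite[Theorem 2.5]{BHR12}; it is the diffuseness of $Q\cap M_1$, not of $Q'\cap M^\omega$, that does this job, contrary to what your sketch suggests. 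Without the amenable/nonamenable dichotomy and this last identification the argument does not close.
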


We refer to Theorem \ref{theorem-general} below for a more general statement that extend \cite[Theorem D]{Ho14} to arbitrary free product von Neumann algebras. As a corollary to Theorem \ref{thmA}, we infer that amenable von Neumann subalgebras $Q \subset M$ with expectation such that the intersection $Q \cap M_1$ is diffuse and with expectation must in fact sit inside $M_1$. Namely, we obtain the following result.

\begin{lettercor}\label{corB}
Let $(M, \varphi) = (M_1, \varphi_1) \ast (M_2, \varphi_2)$ be as in Theorem \ref{thmA}. Let $Q \subset M$ be any amenable von Neumann subalgebra with expectation such that  $Q \cap M_1$ is diffuse and with expectation. Then we have $Q \subset M_1$.
\end{lettercor}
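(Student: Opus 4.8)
The plan is to deduce Corollary \ref{corB} from Theorem \ref{thmA} by showing that, under the amenability hypothesis, the central sequence algebra $Q' \cap M^\omega$ is automatically diffuse. So the only thing to verify is the implication ``$Q$ amenable with expectation $\Rightarrow$ $Q' \cap M^\omega$ diffuse''. Note that we may assume $Q$ has separable predual: indeed, since $Q \cap M_1$ is diffuse, it contains a diffuse abelian von Neumann subalgebra $A$ with separable predual, and it suffices to run the argument with any separable intermediate subalgebra $A \subset Q_0 \subset Q$ that is still amenable and with expectation and still satisfies $Q_0 \cap M_1 \supset A$ diffuse; then Theorem \ref{thmA} gives $Q_0 \subset M_1$, and taking the union over an exhausting net of such $Q_0$ yields $Q \subset M_1$. (If one prefers, one can simply assume separability throughout, matching the hypothesis of Theorem \ref{thmA}.)

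For the diffuseness of the central sequence algebra, I would argue as follows. Since $Q$ is amenable with separable predual and with expectation inside $M$, it is in particular properly infinite or finite; in either case amenability (injectivity) of $Q$ means $Q$ is approximately finite dimensional, hence it has property Gamma relative to itself in the appropriate sense — more concretely, for an amenable von Neumann algebra with separable predual equipped with a faithful normal state $\psi = \varphi|_Q$, the centralizer-type central sequence algebra $Q' \cap Q^\omega$ is diffuse whenever $Q$ itself is diffuse, which holds here because $Q \supset Q \cap M_1$ is diffuse. (This is the standard fact that a diffuse amenable von Neumann algebra has non-trivial central sequences; in the type ${\rm III}$ setting it is the statement that a diffuse injective von Neumann algebra is McDuff-like, provable by a maximality/exhaustion argument building an asymptotically central diffuse abelian subalgebra, or by appealing to the structure theory of injective factors via Connes--Takesaki.) Finally, because $Q \subset M$ is with expectation, one has a normal embedding $Q^\omega \hookrightarrow M^\omega$ compatible with the inclusion, so $Q' \cap Q^\omega \subset Q' \cap M^\omega$, and therefore $Q' \cap M^\omega$ is diffuse as well. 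Now all hypotheses of Theorem \ref{thmA} are met and we conclude $Q \subset M_1$.

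The main obstacle is the type ${\rm III}$ bookkeeping in the claim that a diffuse amenable $\sigma$-finite von Neumann algebra has diffuse central sequence algebra: one must work with the Ocneanu/Ando--Haagerup ultraproduct $Q^\omega$ attached to the state $\varphi|_Q$ rather than a tracial ultrapower, and verify that the asymptotically central abelian subalgebra one constructs actually lives in $Q' \cap Q^\omega$ and is diffuse there. I expect this to follow either from the existing literature on central sequences in injective von Neumann algebras (Connes' characterization of injectivity, or the results on the flow of weights), or from a direct exhaustion argument: write $Q$ as an increasing union of finite-dimensional-by-amenable pieces, use amenability to produce, for each finite subset of $Q$ and each $\varepsilon$, a Haar unitary almost commuting with that subset in the $\|\cdot\|_{\varphi}^\sharp$-norm, and diagonalize over $\omega$. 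Everything else — the passage to separable predual, the inclusion $Q' \cap Q^\omega \subset Q' \cap M^\omega$, and the final invocation of Theorem \ref{thmA} — is routine.
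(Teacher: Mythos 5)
Your proposal follows essentially the same route as the paper's proof: reduce to a separable, amenable, with-expectation intermediate subalgebra $Q_0$ containing a fixed diffuse abelian separable $A \subset Q \cap M_1$, invoke the fact that a diffuse amenable von Neumann algebra with separable predual has diffuse central sequence algebra $Q_0' \cap Q_0^\omega$ (this is exactly \cite[Proposition 2.6]{Ho14}, so no new argument is needed), deduce that $Q_0' \cap M^\omega$ is diffuse from the with-expectation inclusion $Q_0' \cap Q_0^\omega \subset Q_0' \cap M^\omega$, and apply Theorem \ref{thmA}. The one device you leave implicit is how to guarantee that the separable intermediate subalgebra is actually with expectation in $M$ (in the type ${\rm III}$ setting this is not automatic): the paper first uses Lemma \ref{lemma-centralizer} to choose a state $\psi$ making $Q$, $Q\cap M_1$ and $M_1$ globally $\sigma^\psi$-invariant with $(Q \cap M_1)^\psi$ diffuse, picks $A \subset (Q\cap M_1)^\psi$, and then, for each $x \in Q$, takes $Q_0$ to be generated by the full $\sigma^\psi$-orbits of $x$ and of $A$, so that $Q_0$ is separable, globally $\sigma^\psi$-invariant (hence with expectation), amenable, and satisfies that $Q_0 \cap M_1 \supset A$ is diffuse and with expectation.
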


We point out that the separability assumption on the predual of the amenable von Neumann subalgebra $Q \subset M$ is no longer needed in Corollary \ref{corB}. As we mentioned before, in the case when both $M_1$ and $M_2$ are {\em tracial} and both $\varphi_1$ and $\varphi_2$ are faithful normal {\em tracial} states, Corollary \ref{corB} is a consequence of \cite[Theorem D]{Ho14}. Very recently, Ozawa gave in \cite{Oz15} a short proof of the above Corollary~\ref{corB} in the {\em tracial} case using an idea in \cite{BC14}.  However, that proof depends upon the assumption that given states are tracial. Moreover, we would like to emphasize that the tools and the techniques we will develop in order to achieve the goal of this paper have strong potential in future research, and indeed lead to our next work on general rigidity phenomenon for free product von Neumann algebras \cite{HU15}.

We also point out that \cite[Theorem A and Corollary B]{Ho14} hold true under the more general assumption that $M_1$ is diffuse, instead of the centralizer $(M_1)^{\varphi_1}$ being diffuse as in \cite{Ho14}. In fact, we prove the optimal asymptotic orthogonality property result in arbitrary free product von Neumann algebras (see Theorem \ref{theorem-AOP} below) to make those assertions hold under such a general assumption. Remark that this generalization of \cite[Theorem A]{Ho14} does not follow from Theorem \ref{thmA}, since it is applicable to any intermediate subalgebra $M_1 \subset Q \subset M$ without {\it a priori} assuming it to be with expectation.

We now briefly explain the strategy of the proof of Theorem \ref{thmA}. To simplify the discussion, we will further assume that $Q \subset M$ is a {\em subfactor}. We refer to Section \ref{proofs-main-results} for further details.

{\bf Assume that $Q$ is amenable.} In that case, we exploit the fact that $Q$ is AFD with a Cartan subalgebra $A \subset Q$ and hence has lots of central sequences that sit inside the ultraproduct von Neumann subalgebra $A^\omega \subset Q^\omega$. This is a key observation when $Q$ is of type $\rm III$. Using our generalization of the asymptotic orthogonality property in arbitrary free product von Neumann algebras (see Theorem \ref{theorem-AOP} below) and exploiting the recent generalization of Popa's intertwining techniques obtained in \cite{HI15}, we then show that any corner of $A$ must embed with expectation into $M_1$ inside $M$. By exploiting the regularity property of the Cartan inclusion $A \subset Q$ and using a standard maximality argument, we deduce that $Q \subset M_1$.

We point out that our strategy, based on the study of central sequences in the ultraproduct framework {\em via} Popa's asymptotic orthogonality property, works for arbitrary von Neumann algebras. Hence we are able to deal with amenable subfactors $Q \subset M$ in Theorem \ref{thmA} and Corollary \ref{corB} that can possibly be of type ${\rm III}$.

{\bf Assume that $Q$ is nonamenable.} In that case, we use Connes-Takesaki's structure theory \cite{Co72, Ta03} and Popa's deformation/rigidity theory \cite{Po01, Po03, Po06} inside the ultraproduct of the continuous core $(\core_\varphi(M))^\omega$. One of the new features of our proof is to exploit a recent result of Masuda-Tomatsu \cite{MT13} showing that the continuous core of the ultraproduct von Neumann algebra $\core_{\varphi^\omega}(M^\omega)$ sits, as an intermediate von Neumann subalgebra with trace preserving conditional expectations, between $\core_\varphi(M)$ and $(\core_\varphi(M))^\omega$, that is, 
$$\core_\varphi(M) \subset \core_{\varphi^\omega}(M^\omega) \subset (\core_\varphi(M))^\omega.$$
Using Popa's spectral gap rigidity principle and intertwining techniques, we then show that any finite corner of $\core_\varphi(Q)$ must embed into $\core_{\varphi}(M_1)$ inside the ambient continuous core $\core_\varphi(M)$. By a standard maximality argument, we deduce that $\core_\varphi(Q) \subset \core_\varphi(M_1)$ and hence $Q \subset M_1$.

We point out that we do need to pass to the continuous core $\core_\varphi(M)$ in order to make Popa's spectral gap rigidity principle work since we ultimately use Connes's characterization of amenability for {\em finite} von Neumann algebras \cite{Co75}.

We conclude this paper with an appendix in which we give a short proof of an unpublished result due to the second named author showing that Connes's bicentralizer problem has a positive solution for all type ${\rm III_1}$ factors arising as free products of von Neumann algebras.

\tableofcontents

\section{Preliminaries}\label{preliminaries}

For any von Neumann algebra $M$, we will denote by $\mathcal Z(M)$ the centre of $M$, by $\mathcal U(M)$ the group of unitaries in $M$, by $\Ball(M)$ the unit ball of $M$ with respect to the uniform norm $\|\cdot\|_\infty$ and by $(M, \rL^2(M), J^M, \mathfrak P^M)$ the standard form of $M$. More generally, for any linear subspace $\mathcal V \subset M$, we will denote by $\Ball(\mathcal V)$ the unit ball of $\mathcal V$ with respect to the uniform norm $\|\cdot\|_\infty$.

\subsection*{Background on $\sigma$-finite von Neumann algebras}

Let $M$ be any $\sigma$-finite von Neumann algebra with unique predual $M_\ast$ and $\varphi \in M_\ast$ any faithful state. We will write $\|x\|_\varphi = \varphi(x^* x)^{1/2}$ 
for all $x \in M$. Recall that on $\Ball(M)$, the topology given by $\|\cdot\|_\varphi$ 
coincides with the $\sigma$-strong topology. Denote by $\xi_\varphi \in \mathfrak P^M$ the unique canonical implementing vector of $\varphi$. The mapping $M \to \rL^2(M) : x \mapsto x \xi_\varphi$ defines an embedding with dense image such that $\|x\|_\varphi = \|x \xi_\varphi\|_{\rL^2(M)}$ for all $x \in M$.
 
We denote by $\sigma^\varphi$ the modular automorphism group of the state $\varphi$.  The centralizer $M^\varphi$ of the state $\varphi$ is by definition the fixed point algebra of $(M, \sigma^\varphi)$.  The {\em continuous core} of $M$ with respect to $\varphi$, denoted by $\core_\varphi(M)$, is the crossed product von Neumann algebra $M \rtimes_{\sigma^\varphi} \R$.  The natural inclusion $\pi_\varphi: M \to \core_\varphi(M)$ and the unitary representation $\lambda_\varphi: \R \to \core_\varphi(M)$ satisfy the {\em covariance} relation
$$
  \lambda_\varphi(t) \pi_\varphi(x) \lambda_\varphi(t)^*
  =
  \pi_\varphi(\sigma^\varphi_t(x))
  \quad
  \text{ for all }
  x \in M \text{ and all } t \in \R.
$$
Put $\rL_\varphi (\R) = \lambda_\varphi(\R)\dpr$. There is a unique faithful normal conditional expectation $\rE_{\rL_\varphi (\R)}: \core_{\varphi}(M) \to \rL_\varphi(\R)$ satisfying $\rE_{\rL_\varphi (\R)}(\pi_\varphi(x) \lambda_\varphi(t)) = \varphi(x) \lambda_\varphi(t)$. The faithful normal semifinite weight defined by $f \mapsto \int_{\R} \exp(-s)f(s) \, {\rm d}s$ on $\rL^\infty(\R)$ gives rise to a faithful normal semifinite weight $\Tr_\varphi$ on $\rL_\varphi(\R)$ {\em via} the Fourier transform. The formula $\Tr_\varphi = \Tr_\varphi \circ \rE_{\rL_\varphi (\R)}$ extends it to a faithful normal semifinite trace on $\core_\varphi(M)$.

Because of Connes's Radon--Nikodym cocycle theorem \cite[Th\'eor\`eme 1.2.1]{Co72} (see also \cite[Theorem VIII.3.3]{Ta03}), the semifinite von Neumann algebra $\core_\varphi(M)$ together with its trace $\Tr_\varphi$ does not depend on the choice of $\varphi$ in the following precise sense. If $\psi$ is another faithful normal state on $M$, there is a canonical surjective $\ast$-isomorphism
$\Pi_{\varphi,\psi} : \core_\psi(M) \to \core_{\varphi}(M)$ such that $\Pi_{\varphi,\psi} \circ \pi_\psi = \pi_\varphi$ and $\Tr_\varphi \circ \Pi_{\varphi,\psi} = \Tr_\psi$. Note however that $\Pi_{\varphi,\psi}$ does not map the subalgebra $\rL_\psi(\R) \subset \core_\psi(M)$ onto the subalgebra $\rL_\varphi(\R) \subset \core_\varphi(M)$ (and hence we use the symbol $\rL_\varphi(\R)$ instead of the usual $\rL(\R)$).

In order to prove the asymptotic orthogonality property inside arbitrary free product von Neumann algebras (see Theorem \ref{theorem-AOP} below), we will use the following key simple lemma whose proof is similar to \cite[Proposition 2.8]{MU12} using \cite[Theorem 11.1]{HS90}. 

\begin{lem}\label{lemma-centralizer} 
Let $(M, \varphi)$ be any diffuse $\sigma$-finite von Neumann algebra endowed with a faithful normal state. For every $\delta>0$, there exists a faithful normal state $\psi \in M_\ast$ such that $\Vert \varphi - \psi\Vert < \delta$ and such that the centralizer $M^\psi$ is diffuse. 
\end{lem}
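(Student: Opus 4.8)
The plan is to perturb $\varphi$ in norm to a faithful normal state $\psi$ whose centralizer $M^\psi$ contains a diffuse von Neumann subalgebra, hence is itself diffuse, following the argument of \cite[Proposition 2.8]{MU12}. I would first reduce along the centre of $M$. Since $\mathcal Z(M) \subseteq M^\varphi$, the case where $\mathcal Z(M)$ is diffuse is immediate with $\psi = \varphi$. In general, write $1 = z_{\rm c} + \sum_n z_n$ in $\mathcal Z(M)$, where $\{z_n\}$ is the (at most countable, by $\sigma$-finiteness) set of atoms of $\mathcal Z(M)$ and $z_{\rm c}$ is their complement; then $M = M z_{\rm c} \oplus \bigoplus_n M z_n$, all of $z_{\rm c}$ and the $z_n$ lie in $M^\varphi$, the summand $M z_{\rm c}$ (if nonzero) has diffuse centre, and each summand $M z_n$ is a diffuse factor. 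It then suffices to perturb $\varphi$ on each summand, arranging the perturbation on $M z_n$ to have norm $< \delta\, \varphi(z_n)$; no perturbation at all is needed on $M z_{\rm c}$, since $\mathcal Z(M z_{\rm c}) \subseteq (M z_{\rm c})^{\varphi}$ is already diffuse.

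Next I dispose of the semifinite factor summands by an elementary argument --- no perturbation is needed there either. A diffuse semifinite factor is of type ${\rm II}_1$ or ${\rm II}_\infty$, and the centralizer of \emph{any} faithful normal state on such a factor is diffuse. Indeed, no faithful normal state $\varphi_0$ on a type ${\rm II}_1$ or ${\rm II}_\infty$ factor $N$ has trivial centralizer: the Radon--Nikodym density $h$ of $\varphi_0$ with respect to the canonical trace $\Tr$ generates an abelian subalgebra of $N^{\varphi_0}$, so $N^{\varphi_0} = \C$ would force $h$ to be a scalar, contradicting $\Tr(h) < \infty$ in the ${\rm II}_\infty$ case and making $\varphi_0$ tracial --- hence $N^{\varphi_0} = N \neq \C$ --- in the ${\rm II}_1$ case. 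Consequently, if $P := M z_n$ is of type ${\rm II}$ and $P^{\varphi_0}$ had a minimal projection $q$, then $q \in P^{\varphi_0}$ and the corner $q P q$ (again a type ${\rm II}_1$ or ${\rm II}_\infty$ factor) would carry the faithful normal state $\varphi_0(q\,\cdot\,q)/\varphi_0(q)$ with centralizer $q P^{\varphi_0} q = \C q$, which is impossible by the foregoing; hence $P^{\varphi_0}$ is diffuse.

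It remains to handle the type ${\rm III}$ factor summands, and this is where the only genuine difficulty lies: the centralizer of a faithful normal state on a type ${\rm III}$ factor need not be diffuse, and can in fact be trivial. Nor can this be repaired by a naive pinching: replacing $\varphi_0$ by $\varphi_0(p\,\cdot\,p) + \varphi_0(p^\perp\,\cdot\,p^\perp)$ does force a given projection $p$ into the centralizer, but it moves $\varphi_0$ by roughly $\sqrt{\varphi_0(p)(1 - \varphi_0(p))}$ in norm, so it is cheap only for ``tiny'' projections, out of which one can assemble at best an atomic subalgebra of the centralizer, never a diffuse one. Here I would invoke the structure theory of type ${\rm III}$ von Neumann algebras in the form of \cite[Theorem 11.1]{HS90}, exactly as in the proof of \cite[Proposition 2.8]{MU12}: given a faithful normal state $\varphi_0$ on a type ${\rm III}$ factor $P$, it produces a faithful normal state $\psi_0$ on $P$ with $\|\varphi_0 - \psi_0\|$ as small as desired and with $P^{\psi_0}$ diffuse (indeed containing a diffuse abelian subalgebra arising from the continuous/discrete decomposition of $P$). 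Assembling $\psi$ from these states $\psi_0$ on the type ${\rm III}$ summands and from $\varphi$ on all remaining summands yields a faithful normal state with $\|\varphi - \psi\| < \delta$ whose centralizer $M^\psi$ has a diffuse direct summand, as required. The whole substance of the argument is thus concentrated in the type ${\rm III}$ step, and its content is \cite[Theorem 11.1]{HS90}.
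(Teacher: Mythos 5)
Your argument is correct, and it rests on the same key input as the paper's proof, namely \cite[Theorem 11.1]{HS90}; the difference lies in the reduction. The paper splits $M = M_d \oplus M_c$ with $M_d$ of type ${\rm I}$ (hence, since $M$ is diffuse, with diffuse centre) and $M_c$ without type ${\rm I}$ direct summand, and applies \cite[Theorem 11.1]{HS90} once to all of $M_c$, so that the type ${\rm II}$ and type ${\rm III}$ parts are treated simultaneously and no countable assembly of perturbations is needed: the resulting $\psi = \varphi_d + \varphi'_c$ satisfies $\mathcal Z(M_d) \oplus (M_c)^{\varphi'_c} \subseteq M^\psi$, with the first component central in $M^\psi_{\phantom{1}}$ restricted to $M_d$ and the second of type ${\rm II_1}$. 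Your decomposition along the atoms of $\mathcal Z(M)$ forces a case split by factor type and a gluing of countably many perturbations (which you handle correctly by rescaling the perturbation on $Mz_n$ to have norm $<\delta\,\varphi(z_n)$), but it buys the pleasant and correct observation that the type ${\rm II}$ factor summands need no perturbation at all, so the whole weight of \cite[Theorem 11.1]{HS90} is carried only by the type ${\rm III}$ summands. Two small points to tighten. First, your opening claim that a von Neumann algebra containing a unital diffuse subalgebra is itself diffuse is false in general (consider $\rL^\infty[0,1] \subset B(\rL^2[0,1])$); what you actually use, and what is true, is that on each summand the diffuse subalgebra either exhausts the centralizer or lies in its centre, and a minimal projection of the centralizer would produce a $B(H)$ direct summand and hence a normal character on a diffuse abelian algebra, which is impossible. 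Second, the final sentence should conclude that $M^\psi$ is diffuse --- which your construction does deliver, since $M^\psi$ decomposes along $z_c$ and the $z_n$ into summands each of which you have made diffuse --- rather than that it merely has a diffuse direct summand.
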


\begin{proof} 
Write $M = M_d\oplus M_c$ where $M_d$ is of type I with diffuse center and $M_c$ has no type I direct summand. The above decomposition gives $\varphi = \varphi_d+\varphi_c$. By \cite[Theorem 11.1]{HS90} (which dates back to Connes-St{\o}rmer's transitivity theorem \cite{CS78}), one can find a faithful normal positive linear functional $\varphi'_c \in (M_c)_\ast$ such that $\Vert \varphi'_c\Vert_{(M_c)_\ast} = \Vert \varphi_c\Vert_{(M_c)_\ast}$, $\Vert \varphi_c - \varphi'_c\Vert_{(M_c)_\ast} < \delta$ and $(M_c)^{\varphi'_c}$ is of type ${\rm II_1}$. Put $\psi := \varphi_d+\varphi'_c$ and observe that $\psi \in M_\ast$ is a faithful normal state. Then we have $\Vert \varphi - \psi\Vert_{M_\ast} = \Vert \varphi_c - \varphi'_c\Vert_{(M_c)_\ast} < \delta$ and $\mathcal{Z}(M_d)\oplus(M_c)^{\varphi'_c} \subset M^\psi$. Therefore, the centralizer $M^\psi$ is a diffuse von Neumann subalgebra (see e.g.\ \cite[Theorem IV.2.2.3]{Bl06}). 
\end{proof}

\subsection*{Popa's intertwining techniques}

To fix notation, let $M$ be any $\sigma$-finite von Neumann algebra, $1_A$ and $1_B$ any nonzero projections in $M$, $A\subset 1_AM1_A$ and $B\subset 1_BM1_B$ any von Neumann subalgebras. Popa introduced his powerful {\em intertwining-by-bimodules techniques} in \cite{Po01} in the case when $M$ is finite and more generally in \cite{Po03} in the case when $M$ is endowed with an almost periodic faithful normal state $\varphi$ for which $1_A \in M^\varphi$, $A \subset 1_A M^\varphi 1_A$ and $1_B \in M^\varphi$, $B \subset 1_B M^\varphi 1_B$. It was showed in \cite{HV12,Ue12} that Popa's intertwining techniques extend to the case when $B$ is finite and with expectation in $1_B M 1_B$ and $A \subset 1_A M 1_A$ is any von Neumann subalgebra.

In this paper, we will need the following generalization of \cite[Theorem A.1]{Po01} in the case when $A \subset 1_A M 1_A$ is any finite von Neumann subalgebra with expectation and $B \subset 1_B M 1_B$ is any von Neumann subalgebra with expectation.

\begin{thm}[{\cite[Theorem 4.3]{HI15}}]\label{theorem-intertwining}
Let $M$ be any $\sigma$-finite von Neumann algebra, $1_A$ and $1_B$ any nonzero projections in $M$, $A\subset 1_AM1_A$ and $B\subset 1_BM1_B$ any von Neumann subalgebras with faithful normal conditional expectations $\rE_A : 1_A M 1_A \to A$ and $\rE_B : 1_B M 1_B \to B$ respectively. Assume moreover that $A$ is a finite von Neumann algebra.

Then the following conditions are equivalent:

\begin{enumerate}
\item There exist projections $e \in A$ and  $f \in B$, a nonzero partial isometry $v \in eMf$ and a unital normal $\ast$-homomorphism $\theta : eAe \to fBf$ such that the inclusion $\theta(eAe) \subset fBf$ is with expectation and $av = v \theta(a)$ for all $a \in eAe$.
\item There exist $n \geq 1$, a projection $q \in \mathbf M_n (B)$, a nonzero partial isometry $v\in (1_AM\otimes\mathbf{M}_{1, n}(\C))q$ and a unital normal $\ast$-homomorphism $\pi\colon A \rightarrow q\mathbf{M}_n(B)q$ such that the inclusion $\pi(A) \subset q \mathbf M_n (B) q$ is with expectation and $av = v\pi(a)$ for all $a\in A$. 
\item There exists no net $(w_i)_{i \in I}$ of unitaries in $\mathcal U(A)$ such that $\rE_{B}(b^*w_i a)\rightarrow 0$ $\sigma$-strongly as $i\to\infty$ for all $a,b\in 1_AM1_B$.
\end{enumerate}
If one of the above conditions is satisfied, we will say that $A$ {\em embeds with expectation into} $B$ {\em inside} $M$ and write $A \preceq_M B$.
\end{thm}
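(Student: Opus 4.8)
The plan is to prove the cycle of implications by separating the \emph{algebraic} equivalence $(1)\Leftrightarrow(2)$ from the \emph{analytic} equivalence $(2)\Leftrightarrow(3)$, the latter being the heart of the matter and the place where finiteness of $A$ and $\sigma$-finiteness of $M$ genuinely enter. Throughout I fix a faithful normal state $\psi$ on $B$ and use that, being $\sigma$-finite and finite, $A$ carries a faithful normal tracial state $\tau_A$ (pull a faithful normal state on $\mathcal Z(A)$ back through the centre-valued trace of $A$). For $(1)\Rightarrow(2)$: using $\tau_A$ one writes $1_A$ as a finite sum $\sum_{k\le n}u_k^*e'u_k$ with $e'\le e$ and partial isometries $u_k\in A$, transports $\theta\colon eAe\to fBf$ along the corresponding matrix units into a unital normal $\ast$-homomorphism $\pi\colon A\to q\mathbf M_n(B)q$ for a suitable $q\in\mathbf M_n(B)$, builds the intertwiner from $v$ and the $u_k$'s, and inherits the ``with expectation'' property for $\pi(A)\subset q\mathbf M_n(B)q$ from that for $\theta(eAe)\subset fBf$ together with the faithful normal expectation $\rE_B\otimes\operatorname{id}_{\mathbf M_n(\C)}$. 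The converse $(2)\Rightarrow(1)$ is the standard reduction of a matrix amplification to an honest corner, again using that $A$ is finite, producing $e\in A$, $f\in B$, $\theta$ and $v$ as in $(1)$.

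For the easy direction of $(2)\Leftrightarrow(3)$---``a net as in the negation of $(3)$ exists $\Rightarrow$ $(2)$ fails''---suppose $\pi,v$ as in $(2)$ existed and $(w_i)$ is such a net in $\mathcal U(A)$. Writing $v=\sum_j v^{(j)}\otimes e_{1j}$ with $v^{(j)}\in 1_AM1_B$ and applying $\rE_B\otimes\operatorname{id}_{\mathbf M_n(\C)}$ to $v^*w_iv=v^*v\,\pi(w_i)$, one gets $R\,\pi(w_i)\to0$ $\sigma$-strongly, where $R:=(\rE_B\otimes\operatorname{id}_{\mathbf M_n(\C)})(v^*v)\in q\mathbf M_n(B)q$ is positive and nonzero (its diagonal entries $\rE_B((v^{(j)})^*v^{(j)})$ cannot all vanish). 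Now apply the faithful normal conditional expectation $\rE'\colon q\mathbf M_n(B)q\to\pi(A)$ given by the ``with expectation'' clause of $(2)$: since $\pi(w_i)\in\pi(A)$ and $\rE'$ is normal and bimodular, $\rE'(R)\,\pi(w_i)\to0$ $\sigma$-strongly with $\rE'(R)\in\pi(A)$ positive and nonzero. But $\pi(A)$ is a finite $\sigma$-finite von Neumann algebra, hence has a faithful normal tracial state $\tau'$, and then $\|\rE'(R)\,\pi(w_i)\|_{\tau'}=\|\rE'(R)\|_{\tau'}$ is a strictly positive constant, contradicting $\sigma$-strong convergence to $0$. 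It is precisely here that finiteness of $A$---through the trace on $\pi(A)$---replaces the semifinite trace on the basic construction $\langle M,e_B\rangle$ that is unavailable when $M$ is not tracial.

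For the hard direction $(3)\Rightarrow(2)$: assume no such net exists. Extending $\psi$ to $1_BM1_B$ by $\psi\circ\rE_B$ and unwinding the absence of a net in its standard ``finite data'' form yields $\delta>0$ and finitely many $c_1,\dots,c_k\in 1_AM1_B$ with $\sum_{j,l}\|\rE_B(c_j^*\,w\,c_l)\|_\psi^2\ge\delta$ for all $w\in\mathcal U(A)$. I would then run Popa's averaging argument in the right Hilbert $B$-module $\mathcal E=\overline{1_AM1_B}$ for the $B$-valued inner product $\langle x,y\rangle_B=\rE_B(x^*y)$, on which $A$ acts by adjointable operators: the $c_j$'s define a positive ``$\rE_B$-finite'' adjointable operator $P=\sum_j|c_j\rangle\langle c_j|$ on $\mathcal E$, and one selects the unique element $P_0$ of minimal norm---the norm being measured through a faithful normal state built from $\psi$ and $\tau_A$, so that $\mathcal U(A)$ acts isometrically and the relevant convex set is weakly compact---in the weakly closed convex hull $\overline{\conv}^{\,w}\{u\,P\,u^*:u\in\mathcal U(A)\}$. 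By uniqueness $P_0$ commutes with $A$; by the lower bound it is nonzero; and it remains $\rE_B$-finite. Its polar/spectral decomposition then produces a projection $q\in\mathbf M_n(B)$, a unital normal $\ast$-homomorphism $\pi\colon A\to q\mathbf M_n(B)q$ together with a conditional expectation onto $\pi(A)$, and a nonzero partial isometry $v$ with $av=v\pi(a)$, that is, condition $(2)$. The passage from general nonzero projections $1_A,1_B$ to the case $1_A=1_B=1$ is routine.

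I expect the genuinely delicate point to be exactly this averaging/minimal-element step: when $M$ (equivalently $B$) carries no trace, the Jones basic construction has no canonical semifinite trace, so ``$\rL^2$-minimal element'', ``finite projection'' and the compactness/uniqueness that drive Popa's classical argument must all be rebuilt from the $B$-valued inner product and the right Hilbert $B$-module structure of $\overline{1_AM1_B}$, exploiting $\rE_A$, $\rE_B$ and the tracial state $\tau_A$ of the finite algebra $A$ to manufacture a genuine Hilbert-space averaging; once that framework is in place the standard convexity arguments go through, and everything else (the amplification steps and the easy direction above) is comparatively soft. This is precisely the content of \cite[Theorem 4.3]{HI15}, which we take as a black box.
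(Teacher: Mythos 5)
The paper does not prove this theorem at all: it imports it verbatim as \cite[Theorem 4.3]{HI15}, which is exactly what your closing sentence does, so there is no in-paper argument to compare against and your approach coincides with the paper's by construction. For what it is worth, your sketch is a faithful outline of how the cited result is actually established --- the matrix-amplification for $(1)\Leftrightarrow(2)$, the easy implication via $R=(\rE_B\otimes\id)(v^*v)$ and a trace on $\pi(A)$, and the convexity/minimal-element argument run against the $\Ad(\mathcal U(A))$-invariant state $\tau_A\circ\rE_A$ in place of the missing semifinite trace on the basic construction.
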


Moreover, \cite[Theorem 4.3]{HI15} asserts that when $B \subset 1_B M 1_B$ is a {\em semifinite} von Neumann subalgebra endowed with any fixed faithful normal semifinite trace $\Tr$, then $A \preceq_M B$ if and only if there exist a projection $e \in A$, a $\Tr$-finite projection $f \in B$, a nonzero partial isometry $v \in eMf$ and a unital normal $\ast$-homomorphism $\theta : eAe \to fBf$ such that $av = v \theta(a)$ for all $a \in eAe$. Hence, in that case, the notation $A \preceq_M B$ is consistent with \cite[Proposition 3.1]{Ue12}. In particular, the projection $q \in \mathbf M_n (B)$ in Theorem \ref{theorem-intertwining} (2) is chosen to be finite under the trace $\Tr \otimes \tr_n$, when $B$ is semifinite with any fixed faithful normal semifinite trace $\Tr$. We refer to \cite[Section 4]{HI15} for further details.

We say that a $\sigma$-finite von Neumann algebra $P$ is {\em tracial} if it is endowed with a faithful normal tracial state $\tau$. Following \cite{Jo82, PP84}, a unital inclusion of tracial von Neumann algebras $A \subset (P, \tau)$ has {\em finite Jones index} if $\dim_A(\rL^2(P, \tau)_A) < +\infty$ with the Murray--von Neumann dimension function $\dim_A$ determined by $\tau$. Following \cite[Appendix A]{Va07}, a unital inclusion of tracial von Neumann algebras $A \subset (P, \tau)$ has {\em essentially finite index} if there exists a sequence of nonzero projections $(p_n)_n$ in $A' \cap P$ such that the unital inclusion of tracial von Neumann algebras $Ap_n \subset (p_n P p_n, \frac{\tau(p_n\,\cdot\,p_n)}{\tau(p_n)})$ has finite Jones index for all $n \in \N$ and $p_n \to 1$ $\sigma$-strongly as $n \to \infty$.

We will need the following technical lemma about how the intertwining technique behaves with respect to taking subalgebras of essentially finite index. 

\begin{lem}[{\cite[Lemma 3.9]{Va07}}]\label{lemma-finite-index}
Let $M$ be any $\sigma$-finite von Neumann algebra, $1_P$ and $1_B$ any nonzero projections in $M$, $P\subset 1_PM1_P$ and $B\subset 1_BM1_B$ any von Neumann subalgebras with expectation. Assume moreover that $P$ is a finite von Neumann algebra and $A \subset P$ is a unital von Neumann subalgebra of essentially finite index. Then $A \preceq_M B$ implies $P \preceq_M B$.
\end{lem}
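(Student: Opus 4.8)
The plan is to use the "no sequence of unitaries" characterization of intertwining, namely condition (3) in Theorem \ref{theorem-intertwining}, in its contrapositive form. Assume $P \not\preceq_M B$; I will show $A \not\preceq_M B$. By definition of essentially finite index, fix a sequence of nonzero projections $(p_n)_n$ in $A' \cap P$ with $p_n \to 1$ $\sigma$-strongly such that $Ap_n \subset p_n P p_n$ has finite Jones index for every $n$. The idea is that, because $P \not\preceq_M B$, there is a net $(w_i)_i$ of unitaries in $\mathcal U(P)$ with $\rE_B(b^* w_i a) \to 0$ $\sigma$-strongly for all $a, b \in 1_P M 1_B$; I want to manufacture from it, for each fixed $n$, an analogous net inside $\mathcal U(Ap_n)$, and then run a standard diagonal/maximality argument over $n$ to conclude $A \not\preceq_M B$.

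The key step is the finite-index reduction. Fix $n$ and work inside the finite von Neumann algebra $p_n P p_n$ with its normalized trace $\tau_n$. Since $Ap_n \subset p_n P p_n$ has finite Jones index, by \cite{PP84} the Pimsner--Popa inequality gives a constant $\lambda > 0$ with $\rE_{Ap_n}(x^* x) \geq \lambda\, x^* x$ for all $x \in p_n P p_n$; equivalently, the conditional expectation $\rE_{Ap_n} : p_n P p_n \to Ap_n$ is, up to the constant $\lambda$, "coercive" in $\|\cdot\|_{\tau_n}$. This is exactly the hypothesis under which one can run the averaging argument of \cite[Lemma 3.9]{Va07} (or the original \cite[Theorem A.1]{Po01} style argument): if $Ap_n \preceq_M B$, witnessed by a net in $\mathcal U(Ap_n)$ whose $\rE_B$-pushforwards do \emph{not} go to zero, then averaging over a suitable finite set coming from a Pimsner--Popa basis of $p_n P p_n$ over $Ap_n$ produces a net in $\mathcal U(p_n P p_n)$, hence in $\mathcal U(P p_n)$, with the same non-vanishing property relative to $\rE_B$ on $p_n M 1_B$. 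Since $p_n \to 1$, a diagonalization over $n$ then upgrades this to a genuine violation of Theorem \ref{theorem-intertwining}(3) for the inclusion $P \subset 1_P M 1_P$ versus $B$, i.e.\ $P \preceq_M B$, contradicting our assumption. Running this contrapositive for every $n$ and assembling gives: $A \preceq_M B$ forces $Ap_n \preceq_M B$ hence (by the finite-index averaging) $P p_n \preceq_M B$ for all $n$, and letting $p_n \to 1$ yields $P \preceq_M B$.

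The main obstacle, and the one place requiring care rather than quotation, is the interaction between the semifinite/type~III ambient algebra $M$ and the finiteness of $P$: the intertwining relation $A \preceq_M B$ is formulated via $\rE_B$ on the (possibly non-tracial) corners $1_A M 1_B$, while the Pimsner--Popa coerciveness of $Ap_n \subset p_n P p_n$ lives in the trace $\|\cdot\|_{\tau_n}$ on $P$. One must check that the Pimsner--Popa basis argument, which is intrinsic to the pair $(Ap_n, p_n P p_n)$, can be transported to control the $\sigma$-strong behavior of $\rE_B(b^* (\sum_k \eta_k w_i \eta_k^*) a)$ uniformly — this is where \cite[Lemma 3.9]{Va07} already did the work in the finite setting, and the point is simply that since $P$ itself is finite and with expectation in $1_P M 1_P$, one composes with $\rE_P$ and reduces everything to the finite inclusion $Ap_n \subset p_n P p_n$ before feeding into $\rE_B$. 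Modulo this bookkeeping, the argument is exactly that of \cite[Lemma 3.9]{Va07}, and I would present it as such, spelling out only the reduction to the finite corner $p_n P p_n$ and the final diagonalization in $n$.
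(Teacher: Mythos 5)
The paper's own ``proof'' here is a one-line citation: it invokes \cite[Lemma 3.9]{Va07} and asserts that Vaes's argument carries over \emph{mutatis mutandis}, the point being that the finite-index mechanism lives entirely inside the finite algebra $P$ and its corners $p_nPp_n$. You have correctly identified the ingredients of that argument (Pimsner--Popa bases for the finite-index inclusions $Ap_n\subset p_nPp_n$, reduction to the corners, and the need to check that nothing breaks when the ambient $M$ is not tracial), but the mechanism as you describe it does not work, on three counts. First, you misstate the negation of Theorem \ref{theorem-intertwining}(3): $Ap_n\preceq_M B$ is \emph{not} ``witnessed by a net in $\mathcal U(Ap_n)$ whose $\rE_B$-pushforwards do not go to zero''; it is the statement that \emph{no} net in $\mathcal U(Ap_n)$ has vanishing pushforwards, which amounts to a lower bound $\sum_{a,b\in\mathcal F}\|\rE_B(b^*ua)\|^2\geq\delta$ holding uniformly over \emph{all} $u\in\mathcal U(Ap_n)$ for some fixed finite set $\mathcal F$ and $\delta>0$ (cf.\ \cite[Lemma 2.3]{BHR12}, used elsewhere in the paper). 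Second, ``averaging over a Pimsner--Popa basis produces a net in $\mathcal U(p_nPp_n)$'' is not an argument: averages of unitaries are not unitaries, and the transfer must in any case go the other way. One takes an \emph{arbitrary} $v\in\mathcal U(p_nPp_n)$, expands $v=\sum_k\eta_k\rE_{Ap_n}(\eta_k^*v)$ over a finite Pimsner--Popa basis $(\eta_k)_k$ of $p_nPp_n$ over $Ap_n$, and uses the uniform lower bound for $Ap_n$ together with the Pimsner--Popa inequality to deduce the uniform lower bound for $v$; equivalently and more cleanly, one uses characterizations (1)--(2): a nonzero $Ap_n$-$B$ intertwining bimodule generates, under left multiplication by $p_nPp_n$, a $p_nPp_n$-$B$ intertwining bimodule, because $\rL^2(p_nPp_n)$ is finitely generated as a right $Ap_n$-module. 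Third, your two paragraphs argue in opposite directions: the first sets up the contrapositive and proposes to manufacture a net in $\mathcal U(Ap_n)$ out of a net in $\mathcal U(P)$, which is impossible in general (compressing a unitary of $P$ does not produce a unitary of $A$, and finite index is of no help in that direction), while the second argues the direct implication.

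The clean direct argument also shows that your closing ``diagonalization over $n$'' is unnecessary. If $A\preceq_M B$, take $e\in A$, $f\in B$, $v$ and $\theta$ as in Theorem \ref{theorem-intertwining}(1). Since $p_n\in A'\cap P$ and $p_n\to 1$ $\sigma$-strongly, we have $p_nv\to v\neq 0$, so $ap_nv=p_nav=p_nv\,\theta(a)$ for $a\in eAe$ and $p_nv\neq 0$ for some single $n$; taking the polar part of $p_nv$ gives $Ap_n\preceq_M B$. The finite Jones index of $Ap_n\subset p_nPp_n$ then yields $p_nPp_n\preceq_M B$ by the bimodule argument above, and this already implies $P\preceq_M B$, since a corner of $p_nPp_n$ is a corner of $P$ in the sense of condition (1). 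I would rewrite the proof in this form; as it stands, the sketch would not survive being turned into a complete argument.
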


\begin{proof}
This result is \cite[Lemma 3.9]{Va07} when the ambient von Neumann algebra $M$ is {\em finite} and its proof applies {\em mutatis mutandis} to our more general setting.
\end{proof}

We will moreover need the following two technical lemmas about intertwining subalgebras inside continuous cores.

\begin{lem}\label{lemma-intertwining-core}
Let $(M, \varphi)$ be any $\sigma$-finite von Neumann algebra endowed with a faithful normal state. Let $q \in M^\varphi$ be any nonzero projection and $Q \subset qMq$ any von Neumann subalgebra that is globally invariant under the modular automorphism group $\sigma^{\varphi_q}$ of $\varphi_q = \frac{\varphi(q\,\cdot\,q)}{\varphi(q)}$. Denote by $\rE_Q$ the unique $\varphi_q$-preserving conditional expectation from $qMq$ onto $Q$.

Then for every nonzero finite trace projection $p \in \core_\varphi(M)$ and every net $(u_i)_{i\in I}$ in $\Ball(M)$ such that $\lim_i\rE_{Q}(b^* u_i a) = 0$ $\sigma$-strongly for all $a, b \in M q$, we have 
$$\lim_i \|\rE_{\core_{\varphi_{q}}(Q)}(y^* \, p\pi_\varphi(u_i)p \, x)\|_2 = 0, \forall x, y \in p \core_\varphi(M) \pi_\varphi(q).$$

In particular, for any faithful normal state $\psi \in M_\ast$, any nonzero projection $r \in M^\psi$, any von Neumann subalgebra $R \subset rM^\psi r$ satisfying $R \npreceq_M Q$ and any finite trace projection $s \in \rL_\psi(\R)$, we have 
$$\Pi_{\varphi, \psi}(\pi_\psi(R)s) \npreceq_{\core_{\varphi}(M)} \core_{\varphi_{q}}(Q).$$
\end{lem}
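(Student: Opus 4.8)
The plan is to first deduce the ``in particular'' clause from the main assertion, and then to prove the main assertion by reducing it, through a chain of approximations inside the crossed product $\core_\varphi(M)=M\rtimes_{\sigma^\varphi}\R$, to the defining hypothesis on $(u_i)$.

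\emph{Reducing the ``in particular'' clause.} Assume $R\npreceq_M Q$. Condition~(3) of Theorem~\ref{theorem-intertwining}, applied to the inclusions $R\subset M$ (unit $r$) and $Q\subset M$ (unit $q$), yields a net $(w_i)_i$ of unitaries in $\mathcal U(R)$ with $\rE_Q(b^*w_ia)\to 0$ $\sigma$-strongly for all $a,b\in rMq$; writing $w_i=rw_ir$, this holds for all $a,b\in Mq$. Put $p:=\pi_\varphi(r)\,\Pi_{\varphi,\psi}(s)$, a nonzero finite trace projection in $\core_\varphi(M)$ (the two factors commute since $r\in M^\psi$ and $s\in\rL_\psi(\R)$; the case $\pi_\psi(r)s=0$ is trivial), and $v_i:=\Pi_{\varphi,\psi}(\pi_\psi(w_i)s)=\pi_\varphi(w_i)\,\Pi_{\varphi,\psi}(s)$. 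Using $w_i=rw_ir\in M^\psi$ one checks that $v_i$ is a unitary in $\Pi_{\varphi,\psi}(\pi_\psi(R)s)$ with unit $p$, that $p\,\core_\varphi(M)\,\pi_\varphi(q)$ is the two-sided corner for the pair $\Pi_{\varphi,\psi}(\pi_\psi(R)s)\subset\core_\varphi(M)$ and $\core_{\varphi_q}(Q)\subset\core_\varphi(M)$ that appears in Theorem~\ref{theorem-intertwining}(3), and that $v_i=p\,\pi_\varphi(w_i)\,p$. The main assertion, applied with $u_i:=w_i$ and with this $p$, gives $\|\rE_{\core_{\varphi_q}(Q)}(y^*v_ix)\|_2\to 0$ for all $x,y\in p\,\core_\varphi(M)\,\pi_\varphi(q)$; these elements being uniformly bounded and $\rE_{\core_{\varphi_q}(Q)}$ being a $\|\cdot\|_\infty$-contraction, the convergence also holds $\sigma$-strongly. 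By Theorem~\ref{theorem-intertwining} this is exactly the negation of $\Pi_{\varphi,\psi}(\pi_\psi(R)s)\preceq_{\core_\varphi(M)}\core_{\varphi_q}(Q)$.

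\emph{Setup and first reductions for the main assertion.} Since $q\in M^\varphi$, Takesaki's theorem gives $\sigma^{\varphi_q}_t=\sigma^\varphi_t|_{qMq}$, so there is a canonical $\ast$-isomorphism $\pi_\varphi(q)\,\core_\varphi(M)\,\pi_\varphi(q)\cong\core_{\varphi_q}(qMq)$ carrying $\pi_\varphi|_{qMq}$ to $\pi_{\varphi_q}$ and $\pi_\varphi(q)\lambda_\varphi(\R)''$ to $\rL_{\varphi_q}(\R)$, and intertwining $\Tr_\varphi$ restricted to the corner with a positive multiple of $\Tr_{\varphi_q}$. As $Q$ is globally $\sigma^{\varphi_q}$-invariant, $\core_{\varphi_q}(Q)=Q\rtimes_{\sigma^{\varphi_q}}\R$ sits inside $\core_{\varphi_q}(qMq)\cong\pi_\varphi(q)\,\core_\varphi(M)\,\pi_\varphi(q)$, and $\rE_{\core_{\varphi_q}(Q)}$ is the transport of $\rE_Q\rtimes\id$; in particular it is $\Tr_\varphi$-preserving (hence the $\rL^2$-orthogonal projection onto $\rL^2(\core_{\varphi_q}(Q))$), is a contraction for $\|\cdot\|_\infty$ and $\|\cdot\|_2$, and restricts on $\pi_\varphi(qMq)$ to $\pi_\varphi\circ\rE_Q\circ\pi_\varphi^{-1}$. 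For $x,y\in p\,\core_\varphi(M)\,\pi_\varphi(q)$ one has $x=px$ and $y^*=y^*p$, so $w_i:=y^*p\pi_\varphi(u_i)px=y^*\pi_\varphi(u_i)x$ lies in $\pi_\varphi(q)\core_\varphi(M)\pi_\varphi(q)$ and, $p$ being of finite trace, is square-integrable for $\Tr_\varphi$. Since $\rE_{\core_{\varphi_q}(Q)}$ is a $\|\cdot\|_2$- and $\|\cdot\|_\infty$-contraction and $\|p\pi_\varphi(u_i)p\|_\infty\le 1$, a triangle-inequality argument reduces the claim to proving the convergence for $x,y$ in a $\|\cdot\|_2$-dense subspace of $p\,\core_\varphi(M)\,\pi_\varphi(q)$ on which $\|\cdot\|_\infty$ is uniformly bounded; since $\{\pi_\varphi(a)\lambda_\varphi(f):a\in Mq,\ f\in C_c(\R)\}$ has $\sigma$-strong${}^{\ast}$-dense span in $\core_\varphi(M)\pi_\varphi(q)$, Kaplansky's theorem and left multiplication by the finite trace projection $p$ show $\spn\{p\pi_\varphi(a)\lambda_\varphi(f)\}$ is such a subspace. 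By bilinearity take $x=p\pi_\varphi(a)\lambda_\varphi(f)$, $y=p\pi_\varphi(b)\lambda_\varphi(g)$ with $a,b\in Mq$, $f,g\in C_c(\R)$. Then $w_i=\lambda_\varphi(g)^*\big(\pi_\varphi(b^*)p\pi_\varphi(u_i)p\pi_\varphi(a)\big)\lambda_\varphi(f)$, and since $\lambda_\varphi(f),\lambda_\varphi(g)\in\rL_\varphi(\R)$ commute with $\pi_\varphi(q)$, define elements of $\rL_{\varphi_q}(\R)\subset\core_{\varphi_q}(Q)$ and have uniform norm $\le\|f\|_1,\|g\|_1$, bimodularity of $\rE_{\core_{\varphi_q}(Q)}$ over $\core_{\varphi_q}(Q)$ yields $\|\rE_{\core_{\varphi_q}(Q)}(w_i)\|_2\le\|f\|_1\|g\|_1\,\|\rE_{\core_{\varphi_q}(Q)}(\pi_\varphi(b^*)p\pi_\varphi(u_i)p\pi_\varphi(a))\|_2$, so it suffices to show the latter tends to $0$.

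\emph{Removing the central projections, the core computation, and the main obstacle.} The element $\pi_\varphi(b^*)p\pi_\varphi(u_i)p\pi_\varphi(a)$ lies in $\pi_\varphi(q)\core_\varphi(M)\pi_\varphi(q)$ (as $b^*=qb^*$, $a=aq$) and is $\Tr_\varphi$-square-integrable. Approximating $p$ in $\|\cdot\|_2$ by elements $p_n$ of the $\|\cdot\|_2$-dense subspace $\spn\{\pi_\varphi(c)\mu:c\in M,\ \mu\in\rL_\varphi(\R),\ \Tr_\varphi(\mu^*\mu)<\infty\}$ with $\|p_n\|_\infty\le 1$ (so $\|p-p_n\|_2\to 0$, $p$ being of finite trace), and using that in each of the two resulting error terms the single swapped factor $p-p_n\in\rL^2$ sits next to operators of uniformly bounded norm, one gets
$$\big\|\rE_{\core_{\varphi_q}(Q)}(\pi_\varphi(b^*)p\pi_\varphi(u_i)p\pi_\varphi(a))-\rE_{\core_{\varphi_q}(Q)}(\pi_\varphi(b^*)p_n\pi_\varphi(u_i)p_n\pi_\varphi(a))\big\|_2\ \le\ 2\|a\|_\infty\|b\|_\infty\,\|p-p_n\|_2,$$
uniformly in $i$, so it is enough to treat a fixed such $p_n$. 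For $p_n=\sum_k\pi_\varphi(c_k)\mu_k$ one expands into finitely many terms $\pi_\varphi(b^*c_k)\mu_k\pi_\varphi(u_ic_l)\mu_l\pi_\varphi(a)$, approximates each fixed $\mu$ $\sigma$-strong${}^{\ast}$ly by superpositions of $\lambda_\varphi(t)$'s (controlling the error as above), and applies the covariance relation $\lambda_\varphi(t)\pi_\varphi(x)=\pi_\varphi(\sigma^\varphi_t(x))\lambda_\varphi(t)$, normality of $\rE_{\core_{\varphi_q}(Q)}$, the identity $\rE_{\core_{\varphi_q}(Q)}|_{\pi_\varphi(qMq)}=\pi_\varphi\circ\rE_Q\circ\pi_\varphi^{-1}$, and $\rE_Q\circ\sigma^{\varphi_q}_t=\sigma^{\varphi_q}_t\circ\rE_Q$. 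This expresses $\rE_{\core_{\varphi_q}(Q)}$ of each approximant as an integral, against fixed $\rL^1$-densities built from the $\mu_k,\mu_l$, of elements $\pi_\varphi\big(\sigma^\varphi_t(\rE_Q(\gamma u_i\delta))\big)\lambda_\varphi(v)$ with $\gamma\in qM$, $\delta\in Mq$; since $(Mq)^*=qM$, the hypothesis gives $\|\rE_Q(\gamma u_i\delta)\|_\varphi=\|\sigma^\varphi_t(\rE_Q(\gamma u_i\delta))\|_\varphi\to 0$ for each such $\gamma,\delta,t$, while $\|\rE_Q(\gamma u_i\delta)\|_\infty$ is bounded uniformly in $i,t,v$. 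A dominated-convergence argument (after a cutoff making the $\mu$-integrals of finite measure) then gives $\|\rE_{\core_{\varphi_q}(Q)}(\pi_\varphi(b^*)p_n\pi_\varphi(u_i)p_n\pi_\varphi(a))\|_2\to 0$, finishing the proof. The genuinely delicate point is this last step: the weight defining $\Tr_\varphi$ on $\rL_\varphi(\R)$ is the asymmetric $e^{-s}$-weight, so that the $\lambda_\varphi(f)$ with $f\in C_c(\R)$ are not themselves $\Tr_\varphi$-square-integrable, and one must choose a dense subspace of $\rL^2(\core_\varphi(M),\Tr_\varphi)$ compatible simultaneously with all the approximation steps and with the covariance computation, while keeping every estimate uniform in $i$; that bookkeeping in the crossed product by $\R$ is where the real work lies.
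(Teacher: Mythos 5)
Your handling of the ``in particular'' clause and your reductions in the main assertion are sound and broadly parallel to the paper's, up to the point where everything rests on showing $\lim_i\|\rE_{\core_{\varphi_q}(Q)}(\pi_\varphi(b^*)\,p\,\pi_\varphi(u_i)\,p\,\pi_\varphi(a))\|_2=0$ for $a,b\in Mq$. The gap is the ``core computation'', and you flag it yourself without closing it. Because you keep the general finite trace projection $p$ adjacent to $\pi_\varphi(u_i)$, you are forced to decompose $p$ itself, and the two approximations you invoke there are not justified. First, replacing a square-integrable $\mu\in\rL_\varphi(\R)$ by a superposition $\lambda_\varphi(h)$ ``controlling the error as above'' requires the swapped factor $\mu-\lambda_\varphi(h)$ to be small in $\|\cdot\|_2$ with its neighbours controlled in $\|\cdot\|_\infty$; but a $\sigma$-strong${}^*$ approximation gives no such bound uniformly in $i$ (the neighbouring factor contains $u_i$ and varies with the net), while a genuine $\|\cdot\|_2$-approximation would require the nontrivial and unproved density of $\{\lambda_\varphi(h):h\in\rL^1(\R)\}\cap\rL^2(\Tr_\varphi)$ in $\rL^2(\rL_\varphi(\R),\Tr_\varphi)$ --- and fails outright for the natural choice $h\in C_c(\R)$, since $\int_\R e^{-s}|\widehat h(s)|^2\,ds=+\infty$ for every nonzero such $h$ (the logarithmic-integral obstruction for functions of exponential type). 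Second, the concluding ``dominated convergence'' cannot be run in $\|\cdot\|_2$, because the individual integrands $\pi_\varphi(\sigma_t^\varphi(\rE_Q(\gamma u_i\delta)))\lambda_\varphi(v)$ are not square-integrable for $\Tr_\varphi$; only their superposition is, so there is nothing in $\rL^2$ to dominate. As written, the last step does not go through.

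The repair is to put the auxiliary finite trace projections in $\rL_\varphi(\R)$ and on the \emph{outside} of the expression, which is what the paper does. Choose an increasing sequence $(p_m)_m$ of finite trace projections in $\rL_\varphi(\R)$ with $p_m\to 1$ $\sigma$-strongly; since $\Tr_\varphi(p)<+\infty$, approximate $px$ and $y^*p$ in $\|\cdot\|_2$ by $x_0p_m$ and $p_my_0^*$ with $x_0,y_0\in\Ball(\mathcal A\,\pi_\varphi(q))$, where $\mathcal A$ is the linear span of $\pi_\varphi(M)\lambda_\varphi(\R)$ (Kaplansky plus $\Tr_\varphi(p_m)<+\infty$). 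The projections $p_m$ commute with $\pi_\varphi(q)$ and pass through $\rE_{\core_{\varphi_q}(Q)}$ by bimodularity, while the middle term is a finite algebraic sum whose expectation is computed exactly:
$$\rE_{\core_{\varphi_q}(Q)}(p_m y_0^*\,\pi_\varphi(u_i)\,x_0p_m)=\sum_{j,k}p_m\lambda_\varphi(t_k')^*\,\pi_\varphi(\rE_Q(b_k^*u_ia_j))\,\lambda_\varphi(t_j)p_m.$$
Each summand tends to $0$ in $\|\cdot\|_2$ because $\rE_Q(b_k^*u_ia_j)\to 0$ $\sigma$-strongly with uniformly bounded norms and $\Tr_\varphi(p_m)<+\infty$. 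No decomposition of $p$, and no approximation of elements of $\rL_\varphi(\R)$ by superpositions of the $\lambda_\varphi(t)$, is ever needed; the asymmetric $e^{-s}$-weight never enters. You should rearrange your approximation scheme along these lines.
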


\begin{proof}
The proof is essentially contained in \cite[Proposition 2.10]{BHR12} (see also \cite[Proposition 5.3]{HR10}). Simply denote by $\Tr = \Tr_\varphi$ the canonical trace on $\core_\varphi(M)$ and by $\|\cdot\|_2$ the $\rL^2$-norm with respect to $\Tr$. Let $x, y \in \Ball(p \core_\varphi(M)\pi_\varphi(q))$ be any elements. Fix an increasing sequence $(p_m)_m$ of finite trace projections in $\rL_\varphi(\R)$ such that $p_m \to 1$ $\sigma$-strongly. Observe that $p_m \pi_\varphi(q) = \pi_\varphi(q) p_m$ for all $m \in \N$, since $q \in M^\varphi$.

Let $\varepsilon > 0$. Since $\Tr(p) < +\infty$, we may choose $m \in \N$ large enough such that 
\begin{equation}\label{equation-intertwining-core1}
\|px - px p_m\|_2 + \|y^* p - p_m y^* p\|_2 < \frac{\varepsilon}{2}.
\end{equation}
Observe that the unital $\ast$-subalgebra 
$$\mathcal A := \left\{ \sum_{j = 1}^n \pi_\varphi(a_j) \lambda_\varphi(t_j) : n \geq 1, a_1, \dots, a_n \in M, t_1, \dots, t_n \in \R \right\}$$
is $\sigma$-$\ast$-strongly dense in $\core_\varphi(M)$. Using Kaplansky's density theorem and since $\Tr(p_m) < +\infty$, there exist $x_0, y_0 \in \Ball( \mathcal A \pi_\varphi(q))$ such that 
\begin{equation}\label{equation-intertwining-core2}
\| px p_m - x_0 p_m\|_2 + \| p_m y^* p - p_m y_0^*\|_2 < \frac{\varepsilon}{2}.
\end{equation}
Using \eqref{equation-intertwining-core1} and \eqref{equation-intertwining-core2}, for all $i \in I$, we have
\begin{equation}\label{equation-intertwining-core3}
\|\rE_{\core_{\varphi_{q}}(Q)}(y^* \, p\pi_\varphi(u_i)p \, x)\|_2 \leq \|\rE_{\core_{\varphi_{q}}(Q)}(p_m y^*_0 \, \pi_\varphi(u_i) \, x_0p_m)\|_2 + \varepsilon.
\end{equation}

Write $x_0 = \sum_{j = 1}^\ell \pi_\varphi(a_j) \lambda_{\varphi}(t_j)$ and $y_0 = \sum_{k = 1}^n \pi_\varphi(b_k) \lambda_{\varphi}(t'_k)$ for some $a_j, b_k \in Mq$ and $t_j, t'_k \in \R$. Since 
$$\rE_{\core_{\varphi_{q}}(Q)}(p_m y_0^*\, \pi_\varphi(u_i) \, x_0p_m) = \sum_{j, k} p_m \lambda_{\varphi}(t'_k)^* \, \pi_\varphi(\rE_{Q}(b_k^* u_i a_j)) \,  \lambda_{\varphi}(t_j) p_m$$
and since $\lim_i \rE_{Q}(b_k^* u_i a_j) = 0$ $\sigma$-strongly for all $j, k$ and since $\Tr(p_m) < +\infty$, we obtain 
\begin{equation}\label{equation-intertwining-core4}
 \lim_i \|\rE_{\core_{\varphi_{q}}(Q)}(p_m y^*_0 \, \pi_\varphi(u_i) \, x_0p_m)\|_2 = 0.
\end{equation}
Then \eqref{equation-intertwining-core3} and \eqref{equation-intertwining-core4} imply that $\limsup_i \|\rE_{\core_{\varphi_{q}}(Q)}(y^* \, p\pi_\varphi(u_i)p \, x)\|_2 \leq \varepsilon$. Since $\varepsilon > 0$ is arbitrary, we finally obtain 
$$\lim_i \|\rE_{\core_{\varphi_{q}}(Q)}(y^* \, p\pi_\varphi(u_i)p \, x)\|_2 = 0.$$

Next, assume that $\psi \in M_\ast$ is any faithful normal state, $r \in M^\psi$ is any nonzero projection, $R \subset r M^\psi r$ is any von Neumann subalgebra such that $R \npreceq_M Q$ and $s \in \rL_\psi(\R)$ is any nonzero finite trace projection. By Theorem \ref{theorem-intertwining}, there exists a net $(v_i)_{j \in J}$ in $\mathcal U(R)$ such that $\lim_j \|\rE_{Q}(b^* v_j a)\|_\varphi = 0$ for all $a, b \in r M q$. Recall that $\Pi_{\varphi, \psi} \circ \pi_\psi = \pi_\varphi$ and $\Tr_\varphi \circ \Pi_{\varphi, \psi} = \Tr_\psi$. Put $p = \Pi_{\varphi, \psi}(s)$. The first part of the proof implies that $\lim_j \|\rE_{\core_{\varphi_{q}}(Q)}(y^* \, \pi_\varphi(v_j)p \, x)\|_2 = 0$ for all $x, y \in p\pi_\varphi(r)\core_\varphi(M)\pi_\varphi(q)$. Since $\pi_\varphi(v_j)p = \Pi_{\varphi, \psi}(\pi_\psi(v_j)s) \in \mathcal U(\Pi_{\varphi, \psi}(\pi_\psi(R)s))$ for all $j \in J$, we obtain that $\Pi_{\varphi, \psi}(\pi_\psi(R)s) \npreceq_{\core_{\varphi}(M)} \core_{\varphi_{q}}(Q)$ by Theorem \ref{theorem-intertwining}. 
\end{proof}

\begin{lem}\label{lemma-intertwining-core-bis}
Let $M$ be any $\sigma$-finite von Neumann algebra and $\varphi, \psi \in M_\ast$ any faithful states. Let $q \in M^\psi$ be any nonzero projection and $Q \subset qMq$ any diffuse von Neumann subalgebra that is globally invariant under the modular automorphism group $\sigma^{\psi_q}$ of $\psi_q = \frac{\psi(q\,\cdot\,q)}{\psi(q)}$. 
Then for every nonzero finite trace projection $p \in \rL_\psi(\R)$, we have $c_{\psi_q}(Q) \subset c_\psi(M)$ naturally and $$\Pi_{\varphi, \psi}(p \core_{\psi_q}(Q) p) \npreceq_{\core_\varphi (M)} \rL_\varphi(\R).$$
\end{lem}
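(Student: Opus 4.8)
\emph{Outline.} The plan is to prove $\Pi_{\varphi,\psi}(p\,\core_{\psi_q}(Q)\,p)\npreceq_{\core_\varphi(M)}\rL_\varphi(\R)$ by exhibiting a net witnessing the failure of condition~(3) in Theorem~\ref{theorem-intertwining}. Applying the canonical $\ast$-isomorphism $\Pi_{\psi,\varphi}=\Pi_{\varphi,\psi}^{-1}$, it suffices to show $p\,\core_{\psi_q}(Q)\,p\npreceq_{\core_\psi(M)}L'$, where $L':=\Pi_{\psi,\varphi}(\rL_\varphi(\R))=\{\,\pi_\psi([D\varphi:D\psi]_t)\lambda_\psi(t):t\in\R\,\}''$ is the corresponding cocycle-twisted copy of $\rL(\R)$ inside $\core_\psi(M)$. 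First one records the routine structural facts: since $q\in M^\psi$ we have $\lambda_\psi(t)\pi_\psi(q)=\pi_\psi(q)\lambda_\psi(t)$, so $\pi_\psi(Q)$ and $\{\lambda_\psi(t)\pi_\psi(q)\}_t$ generate inside $\pi_\psi(q)\core_\psi(M)\pi_\psi(q)$ a copy of $Q\rtimes_{\sigma^{\psi_q|_Q}}\R$ with unit $\pi_\psi(q)$; as $p\pi_\psi(q)\in\rL_{\psi_q}(\R)$ has $\Tr_\psi(p\pi_\psi(q))\le\Tr_\psi(p)<+\infty$, the compression $p\,\core_{\psi_q}(Q)\,p=(p\pi_\psi(q))\core_{\psi_q}(Q)(p\pi_\psi(q))$ is a finite von Neumann algebra, so $\preceq$ makes sense. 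The goal is thus to produce a net of unitaries $(\tilde w_i)$ in $p\,\core_{\psi_q}(Q)\,p$ such that $\rE_{L'}(\tilde b^*\,\tilde w_i\,\tilde a)\to0$ $\sigma$-strongly for all $\tilde a,\tilde b\in p\pi_\psi(q)\,\core_\psi(M)$, where $\rE_{L'}$ is the $\Tr_\psi$-preserving conditional expectation onto $L'$.

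\emph{Reduction of the estimate.} Two features of $\rE_{L'}$ drive everything: it restricts to $\varphi(\,\cdot\,)1$ on $\pi_\psi(M)$, and $\lambda_\psi(t)=\pi_\psi([D\varphi:D\psi]_t)^*\cdot L'_t$ with $L'_t:=\pi_\psi([D\varphi:D\psi]_t)\lambda_\psi(t)\in L'$ a unitary. Using that $\rE_{L'}$ is an $L'$-bimodule map, together with a $\|\cdot\|_{2,\Tr_\psi}$-density argument (both $p\pi_\psi(q)\core_\psi(M)$ and $p\,\core_{\psi_q}(Q)\,p$ consist of $\Tr_\psi$-finite elements) and the fact that the $\tilde w_i$ are uniformly bounded, one absorbs the $\lambda_\psi$'s occurring in $\tilde a,\tilde b$ into $L'$ and reduces the required convergence to: $\rE_{L'}(\pi_\psi(c)\,\tilde w_i\,\pi_\psi(d))\to0$ $\sigma$-strongly for all $c,d\in M$. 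Here the diffuseness of $Q$ and the computation behind Lemma~\ref{lemma-intertwining-core} (cf.\ \cite[Proposition~2.10]{BHR12}, \cite[Proposition~5.3]{HR10}) enter: expanding $\tilde w_i$ along the crossed-product structure of $\core_{\psi_q}(Q)$, the ``Fourier slice'' of $\rE_{L'}(\pi_\psi(c)\,\tilde w_i\,\pi_\psi(d))$ at $t$ is the value of a normal functional on $M$ (depending on $t,c,d$) applied to the slice of $\tilde w_i$ at $t$, so one wants to choose the net $(\tilde w_i)$ so that these slices tend to $0$ while the full $\rL^2(\Tr_\psi)$-norm — an integral over $t\in\R$ that is controlled by the finite-trace projection $p$ via a square-summability (Hilbert--Schmidt type) estimate — also tends to $0$.

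\emph{Construction of the net, and the main obstacle.} When $Q\cap M^\psi=Q^{\psi_q|_Q}$ is diffuse the net is immediate: choose unitaries $v_i\in\mathcal U(Q\cap M^\psi)$ with $v_i\to0$ $\sigma$-weakly in $M$; then $\pi_\psi(v_i)$ commutes with $\rL_\psi(\R)\ni p$ and with $\pi_\psi(q)$, so $\tilde w_i:=\pi_\psi(v_i)\,p\pi_\psi(q)$ is a unitary of $p\,\core_{\psi_q}(Q)\,p$, and the estimate of the previous paragraph is exactly the first part of Lemma~\ref{lemma-intertwining-core} applied to the subalgebra $\C1_M\subset M$ (for which $\core_\varphi(\C1_M)=\rL_\varphi(\R)$ and the preserving expectation is $\varphi$) together with the net $(v_i)$. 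In general, however, $Q\cap M^\psi$ may be trivial and $p$ need \emph{not} commute with $\pi_\psi(Q)$ (the unit $p\pi_\psi(q)$ of the corner is twisted into $\pi_\psi(Q)$ through $\sigma^{\psi_q|_Q}$), so unitaries of the form $\pi_\psi(v_i)\,p\pi_\psi(q)$ are unavailable; one must work with unitaries of $p\,\core_{\psi_q}(Q)\,p$ that genuinely mix $\pi_\psi(Q)$ with the $\R$-direction $\lambda_\psi(\R)$, and run the $\rL^2$-estimate for such a net. Producing an explicit weakly-null net of corner-unitaries (using that $p\,\core_{\psi_q}(Q)\,p$ is a diffuse finite von Neumann algebra and feeding the diffuseness of $Q$ into the slices) and controlling $\rE_{L'}(\pi_\psi(c)\,\tilde w_i\,\pi_\psi(d))$ uniformly over the $\R$-direction is the step I expect to be the main obstacle; once it is in place, everything else reduces to arguments already available in the paper.
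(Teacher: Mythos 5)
Your reduction to exhibiting a weakly-null net of unitaries in $p\,\core_{\psi_q}(Q)\,p$ (condition (3) of Theorem \ref{theorem-intertwining}), and your observation that the case where $Q^{\psi_q}$ is diffuse follows at once from Lemma \ref{lemma-intertwining-core} applied with trivial target algebra, are both correct. But the proposal stops exactly where the lemma actually has content: you acknowledge that when $Q^{\psi_q}$ is small (possibly trivial) you would need unitaries of the corner that genuinely mix $\pi_\psi(Q)$ with the $\lambda_\psi(\R)$-direction, and you do not construct them. This is a genuine gap, not a routine verification: for a type ${\rm III_1}$ factor $Q$ with $\psi_q|_Q$ of trivial centralizer there is no obvious weakly-null net of corner unitaries whose ``Fourier slices'' you can control, and the uniform-in-$t$ estimate you sketch is precisely what you would have to invent. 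The missing idea is that this hard case should not be attacked by constructing unitaries at all.

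The paper's route is a central decomposition. Write $z\in\mathcal Z(Q)$ for the central projection with $Qz$ of type ${\rm I}$ and $Qz^\perp$ without type ${\rm I}$ direct summand; since $\sigma^{\psi_q}$ preserves $Q$ it fixes $\mathcal Z(Q)$ pointwise, so $z\in M^\psi$ and $\pi_\psi(z)$ is central in $p\,\core_{\psi_q}(Q)\,p$, which therefore splits as $p\,\core_{\psi_z}(Qz)\,p\oplus p\,\core_{\psi_{z^\perp}}(Qz^\perp)\,p$. For the summand $Qz^\perp$ no net is needed: since continuous cores are independent of the weight and the core of a type ${\rm III}$ algebra is of type ${\rm II_\infty}$, the corner $p\,\core_{\psi_{z^\perp}}(Qz^\perp)\,p$ has no type ${\rm I}$ direct summand, whereas $\rL_\varphi(\R)$ is abelian; an embedding $A\preceq B$ into (matrices over) an abelian algebra forces a nonzero type ${\rm I}$ corner of $A$, so this summand cannot embed for purely type-classification reasons. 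For the summand $Qz$, diffuseness of the type ${\rm I}$ algebra $Qz$ forces $\mathcal Z(Qz)$ to be diffuse, and $\mathcal Z(Qz)\subset(Qz)^{\psi_z}$ automatically (modular automorphism groups act trivially on centers); so $\pi_\psi(\mathcal Z(Qz))p$ is a unital diffuse subalgebra of the corner lying in the centralizer, and your ``easy case'' net $\pi_\psi(v_i)p$ with $v_i\in\mathcal U(\mathcal Z(Qz))$ weakly null, fed into Lemma \ref{lemma-intertwining-core}, finishes that summand. In short: the obstacle you flagged is dissolved by the type decomposition rather than overcome by a construction, and without that observation your argument does not close.
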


\begin{proof}
Denote by $z \in \mathcal Z(Q)$ the unique central projection such that $Qz$ is of type ${\rm I}$ and $Qz^\perp$ has no type ${\rm I}$ direct summand. Observe that $z \in M^\psi$, $\pi_\psi(z) \in  \mathcal Z(\core_{\psi_q}(Q))$ and 
$$p \core_{\psi_q}(Q) p = p \core_{\psi_q}(Q) p \, \pi_\psi(z) \oplus p \core_{\psi_q}(Q) p \, \pi_\psi(z^\perp) = p \core_{\psi_z}(Qz) p \oplus p \core_{\psi_{z^\perp}}(Qz^\perp) p.$$

Since $Qz^\perp$ has no type ${\rm I}$ direct summand, $p \core_{\psi_{z^\perp}}(Qz^\perp) p$ has no type ${\rm I}$ direct summand either. This follows from the fact that continuous cores are independent of states or even weights (due to Connes's Radon--Nykodym cocycle theorem  \cite[Th\'eor\`eme 1.2.1]{Co72}) as well as the fact that the continuous core of any type ${\rm III}$ von Neumann algebra must be of type ${\rm II_\infty}$ (see \cite[Theorem XII.1.1]{Ta03}). Hence we have 
$$\Pi_{\varphi, \psi}(p \core_{\psi_q}(Q) p \, \pi_\psi(z^\perp)) \npreceq_{\core_\varphi(M)} \rL_\varphi(\R).$$

Since $Qz$ is of type ${\rm I}$ and diffuse, $\mathcal{Z}(Qz) \subset Q^{\psi_q} z = (Qz)^{\psi_z}$ with $\psi_z := \frac{\psi(z\,\cdot\,z)}{\psi(z)}$ is also diffuse and hence $\mathcal{Z}(Qz) \npreceq_M \C1$. Then Lemma \ref{lemma-intertwining-core} (with letting the $Q$ there be the trivial algebra) implies that 
$$\Pi_{\varphi, \psi}(p \core_{\psi_q}(Q) p \, \pi_\psi(z)) \npreceq_{\core_\varphi(M)} \rL_\varphi(\R).$$
Combining the above two facts, we finally obtain that $\Pi_{\varphi, \psi}(p \core_{\psi_q}(Q) p) \npreceq_{\core_\varphi (M)} \rL_\varphi(\R)$.
\end{proof}

\subsection*{Amalgamated free product von Neumann algebras}

For each $i \in \{ 1, 2 \}$, let $B \subset M_i$ be any inclusion of $\sigma$-finite von Neumann algebras with faithful normal conditional expectation $\rE_i : M_i \to B$. The {\em amalgamated free product} $(M, \rE) = (M_1, \rE_1) \ast_B (M_2, \rE_2)$ is a pair of von Neumann algebra $M$ generated by $M_1$ and $M_2$ and faithful normal conditional expectation $\rE : M \to B$ such that $M_1, M_2$ are {\em freely independent} with respect to $\rE$:
$$\rE(x_1 \cdots x_n) = 0 \; \text{ whenever } \; x_j \in M_{i_j}^\circ \; \text{ and } \; i_1 \neq \cdots \neq  i_{n}.$$
Here and in what follows, we denote by $M_i^\circ = \ker(\rE_i)$. We refer to the product $x_1 \cdots x_n$ where $x_j \in M_{i_j}^\circ$ and $i_1 \neq \cdots \neq i_{n}$ as a {\em reduced word} in $M_{i_1}^\circ \cdots M_{i_n}^\circ$ of {\em length} $n \geq 1$. The linear span of $B$ and of all the reduced words in $M_{i_1}^\circ \cdots M_{i_n}^\circ$ where $n \geq 1$ and $i_1 \neq \cdots \neq i_{n}$ forms a unital $\sigma$-strongly dense $\ast$-subalgebra of $M$. We call the resulting $M$ the \emph{amalgamated free product von Neumann algebra} of $(M_1,\rE_1)$ and $(M_2,\rE_2)$.

When $B = \C 1$, $\rE_i = \varphi_i(\cdot) 1$ for all $i \in \{1, 2\}$ and $\rE = \varphi(\cdot) 1$, we will simply denote by $(M, \varphi) = (M_1, \varphi_1) \ast (M_2, \varphi_2)$ and call the resulting $M$ the {\em free product von Neumann algebra} of $(M_1\varphi_1)$ and $(M_2,\varphi_2)$. 

When $B$ is a semifinite von Neumann algebra with faithful normal semifinite trace $\Tr$ and the weight $\Tr \circ \rE_i$ is tracial on $M_i$ for every $i \in \{1, 2\}$, then the weight $\Tr \circ \rE$ is  tracial on $M$ (see \cite[Proposition 3.1]{Po90} for the finite case and \cite[Theorem 2.6]{Ue98a} for the general case). In particular, $M$ is a semifinite von Neumann algebra. In that case, we will refer to $(M, \rE) = (M_1, \rE_1) \ast_B (M_2, \rE_2)$ as a {\em semifinite} amalgamated free product.

Let $\varphi \in B_\ast$ be any faithful normal state. Then for all $t \in \R$, we have $\sigma_t^{\varphi \circ \rE} = \sigma_t^{\varphi \circ \rE_1} \ast \sigma_t^{\varphi \circ \rE_2}$ (see \cite[Theorem 2.6]{Ue98a}). By \cite[Theorem IX.4.2]{Ta03}, there exists a unique $\varphi\circ \rE$-preserving conditional expectation $\rE_{M_1} : M \to M_1$. Moreover, we have $\rE_{M_1}(x_1 \cdots x_n) = 0$ for all the reduced words $x_1 \cdots x_n$ that contain at least one letter from $M_2^\circ$ (see e.g.\ \cite[Lemma 2.1]{Ue10}). We will denote by $M \ominus M_1 = \ker(\rE_{M_1})$. For more on (amalgamated) free product von Neumann algebras, we refer the reader to \cite{BHR12, Po90, Ue98a, Ue10, Ue12, Vo85, VDN92}.

\begin{lem}\label{lemma-control}
For each $i \in \{ 1, 2 \}$, let $B \subset M_i$ be any inclusion of $\sigma$-finite von Neumann algebras with faithful normal conditional expectations $\rE_i : M_i \to B$. Denote by $(M, \rE) = (M_1, \rE_1) \ast_B (M_2, \rE_2)$ the amalgamated free product. 

Let $\psi \in M_\ast$ be any faithful normal state such that $\psi = \psi \circ \rE_{M_1}$. Let $(u_j)_{j\in J}$ be any net in $\Ball((M_1)^\psi)$ such that $\lim_j \rE_1(b^* u_j a) = 0$ $\sigma$-strongly for all $a, b \in M_1$. Then for all $x, y \in M \ominus M_1$, we have that $\lim_j \rE_{M_1}(y^* u_j x) = 0$ $\sigma$-strongly.
\end{lem}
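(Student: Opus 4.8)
The plan is to reduce the statement to a computation on reduced words, exploiting the fact that $\psi = \psi \circ \rE_{M_1}$ forces the modular automorphism group of $\psi$ to split as a free product, so that the $\psi$-orthogonal complement $M \ominus M_1$ has a convenient orthonormal-type decomposition in terms of reduced words whose last letter comes from $M_2^\circ$. Concretely, by linearity, $\sigma$-strong density and a standard $\|\cdot\|_\psi$-approximation argument (noting that on bounded sets the $\sigma$-strong topology agrees with the $\|\cdot\|_\psi$-topology, and that the net $(u_j)$ is bounded so the maps $z \mapsto \rE_{M_1}(y^* u_j x)$ are uniformly bounded), it suffices to prove the conclusion when $x$ and $y$ are reduced words in $M \ominus M_1$, i.e.\ reduced words containing at least one letter from $M_2^\circ$.

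First I would write $x = x' x_n$ and $y = y' y_m$ where $x_n, y_m$ are the rightmost letters; since $x, y \in M \ominus M_1$ we may arrange (using $\rE_{M_1}$-orthogonality and another approximation, or by grouping the tail) that the rightmost letters lie in $M_2^\circ$, while $x', y'$ are (possibly empty) reduced words. Then $y^* u_j x = y_m^* (y')^* u_j x' x_n$ with $u_j \in (M_1)^\psi \subset M_1$. The key point is to analyze $(y')^* u_j x'$: this is a product of a reduced word ending in $M_1$ (or $M_2$), the element $u_j \in M_1$, and a reduced word beginning in $M_1$ (or $M_2$). One expands $(y')^* u_j x'$ into reduced words by repeatedly using $M_1 = \C 1 \oplus M_1^\circ$ and the freeness relations; the crucial subproducts that survive are governed by terms of the form $\rE_1(b^* u_j a)$ with $a, b \in M_1$ coming from the innermost $M_1$-letters of $x'$ and $y'$ adjacent to $u_j$. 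More precisely, after expansion one obtains a sum of reduced words, each of which, when sandwiched between $y_m^*$ on the left and $x_n$ on the right and hit with $\rE_{M_1}$, vanishes \emph{unless} the central portion collapses to a scalar multiple of $1$ — and that scalar is precisely an expression involving $\rE_1(b^* u_j a)$ or (when $x'$, $y'$ have no $M_1$-letter adjacent to $u_j$) $\rE_1(u_j) = \rE_1(u_j \cdot 1)$, all of which tend to $0$ $\sigma$-strongly by hypothesis. When the center does not collapse, the resulting word still contains at least one $M_2^\circ$-letter (namely one coming from $x_n$ or $y_m$), so $\rE_{M_1}$ kills it.

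The main obstacle I anticipate is bookkeeping: controlling all the cross-terms produced when expanding $(y')^* u_j x'$ into reduced words, and verifying uniformly (in $j$) that every term either lies in $\ker(\rE_{M_1})$ or carries a coefficient of the form $\rE_1(b^* u_j a)$ that converges $\sigma$-strongly to $0$. To keep this manageable I would argue by induction on $\ell(x') + \ell(y')$, peeling off the leftmost letter of $(y')^*$ and the rightmost letter of $x'$ one at a time: if the adjacent letter to $u_j$ is in $M_2^\circ$ one uses freeness to pull it out and reduce the length, and if it is in $M_1^\circ$ one absorbs it into $u_j$, replacing the net $(u_j)$ by $(b^* u_j a)$ for fixed $a, b \in M_1$ — noting that $\lim_j \rE_1(b'^* (b^* u_j a) a') = \lim_j \rE_1((bb')^* u_j (aa')) = 0$ by hypothesis, so the inductive hypothesis still applies. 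The base case $x' = y' = 1$ is immediate: then $\rE_{M_1}(y_m^* u_j x_n)$ with $u_j \in M_1$ and $y_m^*, x_n \in M_2^\circ$ equals $\rE_1$ applied to $\rE_1(u_j)$ times the reduced word $y_m^* x_n$ type term, which lies in $\ker \rE_{M_1}$ after one further expansion, or is a scalar multiple of $\rE_1(y_m^* x_n)\rE_1(u_j) \to 0$; either way it tends $\sigma$-strongly to $0$. Finally, reassembling the (finitely many, from the original reduced-word approximants of $x$ and $y$) contributions and passing back from $\|\cdot\|_\psi$ to $\sigma$-strong convergence on the bounded net completes the argument.
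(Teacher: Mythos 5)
Your word-level computation is essentially the right one (the paper does it in one line: for $x = a x' c$, $y = b y' d$ with $a,b,c,d \in M_1$ and $x', y' \in M_2^\circ \cdots M_2^\circ$, freeness gives $\rE_{M_1}(y^* u_j x) = \rE_{M_1}(d^* y'^* \, \rE_1(b^* u_j a) \, x' c)$, since replacing $b^* u_j a$ by its $M_1^\circ$-part produces a reduced word containing $M_2^\circ$-letters, which $\rE_{M_1}$ kills; your induction on word length is a more laborious route to the same conclusion). The genuine gap is in the step you dismiss as a ``standard $\|\cdot\|_\psi$-approximation argument'': the passage from reduced words to general $x, y \in M \ominus M_1$. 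Uniform boundedness of the maps plus agreement of the $\sigma$-strong and $\|\cdot\|_\psi$-topologies on bounded sets does \emph{not} suffice here, because $\psi$ is not a trace. Approximating $x$ from the right is fine: $\|\rE_{M_1}(y^* u_j (x - x_k))\|_\psi \leq \|y\|_\infty \|x - x_k\|_\psi$. But for the $y$-variable one must control $\|(y^* - y_k^*) u_j x\|_\psi$, and there is no bound of the form $\|z w\|_\psi \leq C \|z\|_\psi$ with $C$ depending only on $\|w\|_\infty$; left and right multiplication are not interchangeable in $\|\cdot\|_\psi$ for a non-tracial state. So equicontinuity in $j$ fails for the $y$-slot, and your reduction does not go through as stated.

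This is precisely where the paper uses the two hypotheses your proposal never invokes: $\psi = \psi \circ \rE_{M_1}$ and $u_j \in (M_1)^\psi$ (rather than merely $u_j \in \Ball(M_1)$). The paper's argument runs in three approximation stages: first general $x$ against word $y$ (right multiplication, easy); then general $y$ against \emph{analytic} $x$, using the modular conjugation identity $(y^* - y_k^*) u_j x \, \xi_\psi = J^M \sigma^\psi_{{\rm i}/2}(u_j x)^* J^M (y^* - y_k^*) \xi_\psi$ together with $\sigma^\psi_{{\rm i}/2}(u_j x) = u_j \sigma^\psi_{{\rm i}/2}(x)$ (this is where $u_j$ being in the centralizer enters, yielding the $j$-uniform bound $\|\sigma^\psi_{{\rm i}/2}(x)\|_\infty \|y^* - y_k^*\|_\psi$); then a final approximation of general $x$ by analytic elements. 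Note also that one must approximate $y^*$, not $y$, in $\|\cdot\|_\psi$. To repair your proof you would need to supply this modular-theoretic step (or an equivalent); in the tracial case your shortcut would indeed be standard, but in the general $\sigma$-finite setting it is the crux of the lemma.
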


\begin{proof}
We first prove the $\sigma$-strong convergence when $x, y \in M_1 M_2^\circ \cdots M_2^\circ M_1$ are {\em words} of the form $x = a x' c$ and $y = b y' d$ with $a, b, c, d \in M_1$ and $x', y' \in M_2^\circ \cdots M_2^\circ$. By freeness with amalgamation over $B$, for all $n \in \N$, we have
$$\rE_{M_1}(y^* u_j x) = \rE_{M_1}(d^*y'^* \, b^* u_j a \, x'c) = \rE_{M_1}(d^*y'^* \, \rE_1(b^* u_j a)  \,  x'c).$$
Since $\lim_j\rE_1(b^* u_j a) = 0$ $\sigma$-strongly, we have $\lim_j\rE_{M_1}(y^* u_j x) = 0$ $\sigma$-strongly.

Recall that $\psi = \psi \circ \rE_{M_1}$. We next prove the $\sigma$-strong convergence when $x \in M \ominus M_1$ is any element and $y \in M_1 M_2^\circ \cdots M_2^\circ M_1$ is any {\em word} as above. Indeed, we may choose a sequence $(x_k)_k$, where each $x_k$ is a finite linear combination of {\em words} in $M_1 M_2^\circ \cdots M_2^\circ M_1$, and such that $\lim_{k\to\infty} \|x - x_k\|_\psi = 0$. Then by triangle inequality, for all $j \in J$ and $k \in \N$, we have
\begin{align*}
\|\rE_{M_1}(y^* u_j x)\|_\psi &\leq \|\rE_{M_1}(y^* u_j x_k)\|_\psi + \|\rE_{M_1}(y^* u_j (x - x_k))\|_\psi \\
&\leq \|\rE_{M_1}(y^* u_j x_k)\|_\psi + \|y^* u_n (x - x_k)\|_\psi \\
&\leq \|\rE_{M_1}(y^* u_j x_k)\|_\psi + \|y\|_\infty \|x - x_k\|_\psi.
\end{align*}
The first part of the proof implies that $\limsup_j \|\rE_{M_1}(y^* u_j x)\|_\psi  \leq \|y\|_\infty \|x - x_k\|_\psi$ for all $k \in \N$ and hence $\lim_j \|\rE_{M_1}(y^* u_j x)\|_\psi = 0$.

Recall that $\psi = \psi \circ \rE_{M_1}$ and hence $\sigma_t^\psi(M_1) = M_1$ for all $t \in \R$. 
We next prove the $\sigma$-strong convergence when $x \in M \ominus M_1$ is any analytic element with respect to the modular automorphism group $\sigma^\psi$ and $y \in M \ominus M_1$ is any element. Indeed, we may choose a sequence $(y_k)_k$, where each $y_k$ is a finite linear combination of {\em words} in $M_1 M_2^\circ \cdots M_2^\circ M_1$, and such that $\lim_{k\to\infty} \|y^* - y_k^*\|_\psi = 0$. Then by triangle inequality, for all $j \in J$ and all $k \in \N$, we have 
\begin{align*}
\|\rE_{M_1}(y^* u_j x)\|_\psi &\leq \|\rE_{M_1}(y^*_k u_j x)\|_\psi + \|\rE_{M_1}((y^*-y^*_k) u_n x)\|_\psi \\
&\leq \|\rE_{M_1}(y^*_k u_j x)\|_\psi + \|(y^* - y_k^*) u_j x \|_\psi \\
&= \|\rE_{M_1}(y^*_k u_j x)\|_\psi +  \|J^M \sigma_{{\rm i}/2}^\psi(x)^* u_j^* J^M(y^* - y_k^*) \|_\psi \\
&= \|\rE_{M_1}(y^*_k u_j x)\|_\psi +  \|\sigma_{{\rm i}/2}^\psi(x)\|_\infty \|y^* - y_k^* \|_\psi.
\end{align*}
The second part of the proof implies that $\limsup_j \|\rE_{M_1}(y^* u_j x)\|_\psi  \leq \|\sigma_{{\rm i}/2}^\psi(x)\|_\infty \|y^* - y_k^* \|_\psi$ for all $k \in \N$ and hence $\lim_j \|\rE_{M_1}(y^* u_j x)\|_\psi = 0$.

We finally prove the $\sigma$-strong convergence when $x, y \in M \ominus M_1$ are any elements. Indeed, we may choose a sequence $(x_k)_k$ in $M \ominus M_1$ of analytic elements with respect to the modular automorphism group $\sigma^\psi$ such that $\lim_{k\to\infty} \|x - x_k\|_\psi = 0$. Then by triangle inequality, for all $j \in J$ and all $k \in \N$, we have 
\begin{align*}
\|\rE_{M_1}(y^* u_j x)\|_\psi &\leq \|\rE_{M_1}(y^* u_j x_k)\|_\psi + \|\rE_{M_1}(y^* u_j (x - x_k))\|_\psi \\
&\leq \|\rE_{M_1}(y^* u_j x_k)\|_\psi + \|y^* u_j (x - x_k)\|_\psi \\
&\leq \|\rE_{M_1}(y^* u_j x_k)\|_\psi + \|y\|_\infty \|x - x_k\|_\psi.
\end{align*}
The third part of the proof implies that $\limsup_j \|\rE_{M_1}(y^* u_j x)\|_\psi  \leq \|y\|_\infty \|x - x_k\|_\psi$ for all $k \in \N$ and hence $\lim_j \|\rE_{M_1}(y^* u_j x)\|_\psi = 0$. This finishes the proof of Lemma \ref{lemma-control}.
\end{proof}

The next proposition about controlling the {\em (quasi)-normalizer} of diffuse subalgebras inside free product von Neumann algebras will be very useful in the proof of Theorem \ref{thmA}. This is a variant of \cite[Theorem 1.1]{IPP05} and \cite[Proposition 3.3]{Ue12}, but the proof uses an idea of \cite[Lemma D3]{Va06} and the previous lemma crucially. We point out that the first assertion also generalizes \cite[Corollary 3.2]{Ue10} (with $n=1$, $\pi(x) = uxu^*$ and $v=u$ for $u \in \mathcal{U}(A' \cap M)$ or $\mathcal{N}_M(A)$). A more general, unified statement seems possible in the framework of amalgamated free products because the previous lemma is quite general, but the statements below fit the later use.

\begin{prop}\label{proposition-control} 
For each $i \in \{1, 2\}$, let $(M_i, \varphi_i)$ be any $\sigma$-finite von Neumann algebra endowed with a faithful normal state. Denote by $(M, \varphi) = (M_1, \varphi_1) \ast (M_2, \varphi_2)$ the free product. 
\begin{enumerate}
\item Let $1_Q \in M_1$ be any nonzero projection and $Q \subset 1_Q M_1 1_Q$ any diffuse von Neumann subalgebra with expectation. For every $n \geq 1$, every (not necessarily unital) normal $\ast$-homomorphism $\pi : Q \to \mathbf M_n(M_1)$ and every nonzero partial isometry $v \in (1_QM \otimes \mathbf M_{1, n}(\C))\pi(1_Q)$ such that $x v = v \pi(x)$ for all $x \in Q$, we have 
$$v \in (1_QM_1 \otimes \mathbf M_{1, n}(\C))\pi(1_Q).$$

\item Let $1_A \in M$ be any nonzero projection and $A \subset 1_A M 1_A$ any diffuse von Neumann subalgebra with expectation. For every $n \geq 1$, every (not necessarily unital) normal $\ast$-homomorphism $\pi : A \to \mathbf M_n(M_1)$ such that the inclusion $\pi(A) \subset \pi(1_A) \mathbf M_n(M_1) \pi(1_A)$ is with expectation and every nonzero partial isometry $v \in (1_AM \otimes \mathbf M_{1, n}(\C))\pi(1_A)$ such that $a v = v \pi(a)$ for all $a \in A$, we have 
$$v^*v \in \pi(1_A)\mathbf M_n(M_1)\pi(1_A) \quad \text{and} \quad v^* \mathcal N_{1_AM1_A}(A)\dpr v \subset v^*v\mathbf M_n(M_1)v^*v.$$
\end{enumerate}
\end{prop}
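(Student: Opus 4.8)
The plan is to prove (1) first by a direct computation with reduced words, and then to bootstrap (2) from (1). For (1), write $v = (v_1,\dots,v_n) \in (1_Q M \otimes \mathbf M_{1,n}(\C))\pi(1_Q)$ with $v_k \in 1_Q M$, and note that each $v_k$ decomposes as $v_k = \rE_{M_1}(v_k) + (v_k - \rE_{M_1}(v_k))$ with the second summand lying in $M \ominus M_1$. Let $w = v - \rE_{M_1 \otimes \mathbf M_{1,n}(\C)}(v) \in (1_Q(M\ominus M_1) \otimes \mathbf M_{1,n}(\C))\pi(1_Q)$; it suffices to show $w = 0$. Since $Q$ is diffuse and with expectation, $Q$ is of course a finite von Neumann algebra — more precisely I may first reduce, by cutting with a central projection of $Q$ and using that a diffuse von Neumann algebra always admits a diffuse abelian subalgebra with expectation, to the situation where there is a net (indeed sequence) of unitaries $u_j \in Q$ behaving asymptotically like a diffuse abelian subalgebra. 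Applying Lemma \ref{lemma-centralizer} to $1_Q M_1 1_Q$, choose a faithful normal state $\psi_1$ on $1_Q M_1 1_Q$ close to $(\varphi_1)_{1_Q}$ with diffuse centralizer, pick a diffuse abelian $A_0 \subset (1_Q M_1 1_Q)^{\psi_1}$, and note $A_0 \preceq$-style arguments give a net $u_j \in \mathcal U(A_0)$ with $\rE_{A_0}$-type decay; but what we actually need is simpler: since $A_0$ is diffuse abelian, there is a sequence of Haar unitaries $u_j \in \mathcal U(A_0)$ with $u_j \to 0$ weakly, so $\rE_1(b^* u_j a) \to 0$ $\sigma$-strongly for all $a,b \in M_1$ (using $A_0 \subset M_1$ and that $\rE_1$ is normal). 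The intertwining relation $u_j v = v \pi(u_j)$ gives $u_j w = w \pi(u_j)$ entrywise. Then $\|w\|_\psi^2 = \langle u_j w, u_j w\rangle_{\psi}$ involves $\sum_k \rE_{M_1}(w_k^* u_j w_k)$-type quantities after applying $\rE_{M_1}$; by Lemma \ref{lemma-control} applied to the free product $(M,\psi)$ (with $\psi = \psi \circ \rE_{M_1}$ a suitable state constructed so that $\rE_{M_1}$ is $\psi$-preserving and $M_1$-side matches $\psi_1$), each $\rE_{M_1}(w_k^* u_j w_k) \to 0$ $\sigma$-strongly because $w_k \in M \ominus M_1$. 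Combining with $w\pi(u_j) = u_j w$ and the fact that $\pi(u_j)$ is a partial isometry (so bounded), a standard averaging/$\sigma$-strong-limit argument forces $\|w\|_\psi = 0$, hence $w = 0$, i.e.\ $v \in (1_Q M_1 \otimes \mathbf M_{1,n}(\C))\pi(1_Q)$.

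For (2), the strategy is the usual two-step promotion. First, $v^* v \in \pi(1_A)\mathbf M_n(M)\pi(1_A)$ a priori; I want to show $v^*v \in \pi(1_A)\mathbf M_n(M_1)\pi(1_A)$. Write $p = \pi(1_A)$. From $av = v\pi(a)$ we get $\pi(a) v^* v = v^* a v = v^* v \pi(a)$ for all $a \in A$, so $v^*v \in \pi(A)' \cap p\mathbf M_n(M)p$. Now apply part (1) to the $\ast$-homomorphism $\pi$ restricted to $A$ and to the partial isometry $v$ itself: part (1) says $v \in (1_A M_1 \otimes \mathbf M_{1,n}(\C))p$ — wait, part (1) is stated for $Q \subset 1_Q M_1 1_Q$, so to invoke it I need $A \subset M_1$, which is precisely what we do \emph{not} know. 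So instead the argument must go: the condition $av = v\pi(a)$ together with diffuseness of $A$ and $A \npreceq_M$-trivial-type facts, combined with Lemma \ref{lemma-control}, is applied directly to $A \subset 1_A M 1_A$. The cleanest route: by Lemma \ref{lemma-centralizer} choose a faithful normal state on $1_A M 1_A$ — but $A$ need not be in a centralizer. The right tool is that $A$ diffuse with expectation $\rE_A$ furnishes, for any faithful normal state $\psi$ on $1_A M 1_A$ with $\psi = \psi \circ \rE_A$, a net of unitaries in $A$ tending to $0$ weakly; we then run the same computation as in (1) but now with $\pi(a) \in \mathbf M_n(M_1)$ and the intertwiner $v$, applying Lemma \ref{lemma-control} on the \emph{right} (in $\mathbf M_n(M) = M \otimes \mathbf M_n(\C)$, with free product structure $(M_1 \otimes \mathbf M_n(\C)) \ast (M_2 \otimes \mathbf M_n(\C))$, amalgamated over $\mathbf M_n(\C)$) to the images $\pi(u_j)$, which lie in $\mathbf M_n(M_1)$: for any $x \in \mathbf M_n(M) \ominus \mathbf M_n(M_1)$ and the net $\pi(u_j)$ (asymptotically free from $\mathbf M_n(M)\ominus\mathbf M_n(M_1)$ since $\rE_{M_1}\circ\pi$-type decay holds — here we need $\rE_1(b^*\pi(u_j)a)\to 0$, which holds because $\pi(u_j)\to 0$ weakly in $\mathbf M_n(M_1)$ as $u_j\to 0$ weakly and $\pi$ is normal into a \emph{finite}-index... no, $\pi$ need not have finite index; but $\pi(u_j)\to 0$ $\sigma$-weakly still follows since $\pi$ is normal and unital onto its image, and $u_j\to 0$ $\sigma$-weakly), we get $\rE_{\mathbf M_n(M_1)}(x^* \pi(u_j) y)\to 0$. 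Feeding this into $\|v - \rE_{\mathbf M_n(M_1)}(v)\|^2$-type estimates via $v\pi(u_j) = u_j v$ gives, exactly as in (1), that $v \in (1_A M_1 \otimes \mathbf M_{1,n}(\C))p$ — here I do \emph{not} need $A \subset M_1$, only that $\pi(A) \subset \mathbf M_n(M_1)$. Hence $v^*v \in p\mathbf M_n(M)p$ with $v^*v = v^* v$, and since $v$ has all entries in $M_1$, $v^*v \in \pi(1_A)\mathbf M_n(M_1)\pi(1_A)$.

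Finally, for the quasi-normalizer statement, take $u \in \mathcal N_{1_A M 1_A}(A)$ (and then extend to the weak closure $\mathcal N_{1_A M 1_A}(A)\dpr$ by normality and density). Then $\pi' : A \to \mathbf M_n(M_1)$ defined by $\pi'(a) = \pi(u^* a u)$ is another normal $\ast$-homomorphism with the same image-with-expectation, and $vu^* =: v'$ satisfies $a v' = a v u^* = v \pi(a) u^* = v u^* \pi(u a u^*) = v' \pi'(a)$, i.e.\ $v'$ is an $(A, \pi')$-intertwiner; wait, more carefully we want an intertwiner between the \emph{same} $\pi$. Set $V = v^* u v$; then for $a \in A$, using $au = u a'$ with $a' = u^* a u \in A$ and the relations $a v = v\pi(a)$, $a' v = v\pi(a')$, we compute $\pi(a) V = \pi(a) v^* u v = (v^* a u) v = v^* u a' v = v^* u v \pi(a') = V\pi(u^* a u)$. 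So $V$ intertwines $\pi$ with $\pi \circ \Ad u^*$. Apply the first (already-proved) part of (2) — or rather re-run part (1)'s computation — to conclude $V \in \pi(1_A)\mathbf M_n(M)\pi(1_A)$ \emph{and} that $V$ has entries in $M_1$: indeed $V = v^* u v$ and once we know the $(A,\pi')$-intertwiner $w := u v$ (satisfying $a(uv) = u(u^* a u) v = uv\pi(u^*au)$, so $w$ intertwines $A$ via $a \mapsto \pi(u^* a u)$ into $\mathbf M_n(M_1)$) lies in $(1_A M_1 \otimes \mathbf M_{1,n}(\C))\pi(u^*1_A u)$ by the same argument as above (the image is still inside $\mathbf M_n(M_1)$), we get $v^* u v = v^* w \in v^* \mathbf M_n(M_1) \cdot \text{(finite matrices)} \subset v^*v \mathbf M_n(M_1) v^* v$ after absorbing $v^*$ (whose entries are in $M_1$ by part (1)-for-(2)) and using $v^*v \in \mathbf M_n(M_1)$. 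This yields $v^* \mathcal N_{1_A M 1_A}(A)\dpr v \subset v^*v \mathbf M_n(M_1) v^*v$.

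The main obstacle I anticipate is bookkeeping around the correct choice of faithful normal state $\psi$ on the relevant algebra ($1_Q M 1_Q$, $1_A M 1_A$, or their matrix amplifications) so that simultaneously (a) the conditional expectation onto the "$M_1$-part" is $\psi$-preserving, as required to invoke Lemma \ref{lemma-control}, and (b) the given subalgebra ($Q$, $A$, or $\pi(A)$) either sits in a centralizer or at least admits the needed weakly-null net of unitaries. In the ambient (non-corner) case $1_A = 1$ with the original state $\varphi$ this is automatic since $\varphi = \varphi \circ \rE_{M_1}$; in the corner and matrix-amplified cases one must build $\psi$ from $\varphi$ via the amalgamated-free-product state-construction (using $\sigma^{\varphi\circ\rE} = \sigma^{\varphi\circ\rE_1} \ast \sigma^{\varphi\circ\rE_2}$) and check the centralizer/diffuseness hypotheses survive, which is where Lemma \ref{lemma-centralizer} enters. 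The rest is the standard "asymptotic freeness kills off-diagonal words" computation, structurally identical to \cite[Lemma D3]{Va06}, made rigorous by Lemma \ref{lemma-control}.
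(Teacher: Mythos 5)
Your part (1) is essentially the paper's proof: decompose $v=\rE_{\mathbf M_n(M_1)}(v)+w$, check that $w$ still intertwines, and kill $w$ with a weakly null sequence of unitaries via Lemma \ref{lemma-control}. One slip there: you draw your unitaries $u_j$ from a diffuse abelian $A_0\subset (1_QM_11_Q)^{\psi_1}$, but the intertwining relation $u_jv=v\pi(u_j)$ is only available for $u_j\in Q$, and $A_0$ need not sit inside $Q$. The fix is to apply Lemma \ref{lemma-centralizer} to $Q$ itself (after choosing a state for which $Q$ is $\sigma^\psi$-invariant), so that the unitaries come from a diffuse centralizer $Q^{\psi_Q}\subset Q\cap (M_1)^\psi$; this is what the paper does and the repair is routine. (Also, ``diffuse and with expectation'' does not imply finite, but nothing in your argument actually uses finiteness.)

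Part (2) has a genuine gap. You claim that running the part-(1) computation ``on the right'' with the unitaries $\pi(u_j)\in\mathbf M_n(M_1)$ shows $v\in(1_AM_1\otimes\mathbf M_{1,n}(\C))\pi(1_A)$, asserting that you ``do not need $A\subset M_1$, only that $\pi(A)\subset\mathbf M_n(M_1)$''. This is false both as a statement and as an argument. As a statement: take $A_1\subset M_1$ diffuse with expectation, $u\in\mathcal U(M)$ arbitrary, $A=uA_1u^*$, $n=1$, $\pi(uau^*)=a$ and $v=u$; then $xv=v\pi(x)$ for all $x\in A$, all hypotheses of (2) hold, yet $v=u$ need not lie in $M_1$ --- which is precisely why the proposition only asserts membership of $v^*v$ and $v^*\mathcal N_{1_AM1_A}(A)\dpr v$ in $\mathbf M_n(M_1)$, not of $v$. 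As an argument: the relation $av=v\pi(a)$ does \emph{not} descend to $w:=v-\rE_{\mathbf M_n(M_1)}(v)$, because $\rE_{\mathbf M_n(M_1)}(av)\neq a\,\rE_{\mathbf M_n(M_1)}(v)$ when $a\notin M_1$; in part (1) you used $Q\subset M_1$ on the left in an essential way. The correct route is one you actually write down and then abandon: the two-sided relation $\pi(a)\,(v^*xv)=(v^*xv)\,\pi(x^*ax)$ for $x\in\mathcal N_{1_AM1_A}(A)$ (and, taking $x=1_A$, for $v^*v$). Here \emph{both} multipliers lie in $\mathbf M_n(M_1)$, so the relation does pass to $V-\rE_{\mathbf M_n(M_1)}(V)$ with $V=v^*xv$, and the part-(1) computation carried out inside the amalgamated free product $\mathbf M_n(M)=\mathbf M_n(M_1)\ast_{\mathbf M_n(\C)}\mathbf M_n(M_2)$, with weakly null unitaries $\pi(a_k)$ taken from a diffuse centralizer part of $\pi(A)$, gives $v^*xv\in\mathbf M_n(M_1)$ directly. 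That is the paper's proof; your detour through ``$v$ (and $uv$) have entries in $M_1$'' must be deleted, and with it the final absorption step that relies on it.
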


\begin{proof} $(1)$ As in the proof of Lemma \ref{lemma-centralizer} and since $Q \subset 1_Q M_1 1_Q$ is with expectation, we may choose a faithful normal state $\psi \in M_\ast$ such that $\psi = \psi \circ \rE_{M_1}$, $1_Q \in (M_1)^\psi$, $Q \subset 1_Q M 1_Q$ is globally invariant under the modular automorphism group $\sigma^{\psi_Q}$ and $Q^{\psi_{Q}} \subset 1_Q (M_1)^\psi 1_Q$ is diffuse where $\psi_Q := \frac{\psi(1_Q \, \cdot \, 1_Q)}{\psi(1_Q)}$.

Let $n, \pi, v$ as in the statement. Denote by $\tr_n$ the canonical normalized trace on $\mathbf M_n(\C)$ and write $v = [v_1 \cdots v_n] \in (1_QM \otimes \mathbf M_{1, n}(\C))\pi(1_Q)$. For all $x \in Q$, since $x \, v = v \, \pi(x)$, we have 
$$x \, \rE_{\mathbf M_n(M_1)}( v) = \rE_{\mathbf M_n(M_1)}(x \, v)  = \rE_{\mathbf M_n(M_1)}(v \, \pi(x)) = \rE_{\mathbf M_n(M_1)}(v) \, \pi(x)$$
and hence
\begin{equation}\label{equation-control1}
x \, (v -  \rE_{\mathbf M_n(M_1)}( v)) = (v -  \rE_{\mathbf M_n(M_1)}(v)) \, \pi(x).
\end{equation}
Put $w := v - \rE_{\mathbf M_n(M_1)}(v) \in (1_Q(M \ominus M_1) \otimes \mathbf M_{1, n}(\C))\pi(1_Q)$ and write $w = [w_1 \cdots w_n]$ with $w_1, \dots, w_n \in 1_Q(M \ominus M_1)$.
 Fix a sequence of unitaries $(u_k)_k$ in $\mathcal U(Q^{\psi_Q})$ such that $\lim_{k\to\infty} u_k = 0$ $\sigma$-weakly. By Lemma \ref{lemma-control}, we have 
\begin{equation}\label{equation-control2}
\lim_{k\to\infty} \|\rE_{\mathbf M_n(M_1)}(w^* u_k w)\|_{\psi \otimes \tr_n}^2 = \lim_{k\to\infty} \sum_{i, j = 1}^n \|\rE_{M_1}(w^*_i u_k w_j)\|_\psi^2 = 0.
\end{equation}
Using \eqref{equation-control1} and \eqref{equation-control2} and since $\pi(u_k) \in \mathcal U(\pi(Q))$ and $w^*w \in \pi(Q)' \cap \pi(1_Q) \mathbf M_n(M) \pi(1_Q)$, we have 
\begin{align*}
\|\rE_{\mathbf M_n(M_1)}(w^*w)\|_{\psi \otimes \tr_n} &= \limsup_{k\to\infty} \| \pi(u_k) \, \rE_{\mathbf M_n(M_1)}(w^*w) \|_{\psi \otimes \tr_n} \\
&= \limsup_{k\to\infty} \|\rE_{\mathbf M_n(M_1)}(\pi(u_k) \, w^*w )\|_{\psi \otimes \tr_n} \\
&= \limsup_{k\to\infty} \|\rE_{\mathbf M_n(M_1)}(w^*w \, \pi(u_k))\|_{\psi \otimes \tr_n} \\
&= \lim_{k\to\infty} \|\rE_{\mathbf M_n(M_1)}(w^* \, u_k \, w)\|_{\psi \otimes \tr_n} \\
& = 0.
\end{align*}
This implies that $w^* w = 0$ and hence $w = 0$. Thus $v = \rE_{\mathbf M_n(M_1)}(v) \in (1_QM_1 \otimes \mathbf M_{1, n}(\C))\pi(1_Q)$.

$(2)$ We will be working inside the amalgamated free product von Neumann algebra
$$\mathbf M_n(M) = (\mathbf M_n(M_1), \varphi_1 \otimes \id_n) \ast_{\mathbf M_n(\C)} (\mathbf M_n(M_2), \varphi_2 \otimes \id_n),$$
and substitute formula \eqref{equation-contro13} below for the assumption of item $(1)$ that $xv = v\pi(x)$ for all $x \in Q$. 

Since $\pi(A) \subset \pi(1_A) \mathbf M_n(M_1) \pi(1_A)$ is a diffuse von Neumann subalgebra with expectation, we may choose, as in the proof of item $(1)$, a faithful normal state $\psi \in \mathbf M_n(M)_\ast$ such that $\psi = \psi \circ \rE_{\mathbf M_n(M_1)}$, $\pi(1_A) \in \mathbf M_n(M_1)^\psi$ and $\pi(A) \cap \pi(1_A) \mathbf M_n(M_1)^\psi \pi(1_A)$ is diffuse. Fix a sequence of unitaries $(u_k)_k$ in $\pi(A) \cap \pi(1_A) \mathbf M_n(M_1)^\psi \pi(1_A)$ such that $\lim_{k\to\infty}u_k = 0$ $\sigma$-weakly. For each $k \in \N$, we may write $u_k = \pi(a_k)$ with a unitary $a_k \in A$.

Let now $x \in \mathcal{N}_{1_A M 1_A}(A)$ be any normalizing unitary element. Then for all $a \in A$, we have
\begin{equation}\label{equation-contro13}
v^* x v \, \pi(a) = v^* x a v = v^* (x a x^*) x v = \pi(xax^*) \, v^*x v,
\end{equation}
and hence, as in the proof of item $(1)$, for every $k \in\N$ we have
\begin{equation}\label{equation-control4}
(v^* x v - \rE_{\mathbf M_n(M_1)}(v^* x v)) \, u_k = \pi(xa_k x^*) \, (v^* x v - \rE_{\mathbf M_n(M_1)}(v^* x v)).
\end{equation}
Put $w := v^* x v - \rE_{\mathbf M_n(M_1)}(v^* x v) \in \pi(1_A)(\mathbf M_n(M) \ominus \mathbf M_n(M_1))\pi(1_A)$. Using \eqref{equation-control4} and Lemma \ref{lemma-control} and since $\pi(xa_k x^*) \in \mathcal U(\pi(A))$, we obtain, as in the proof of item $(1)$, that 
$$
\|\rE_{\mathbf M_n(M_1)}(w w^* )\|_\psi = \lim_{k\to\infty} \| \rE_{\mathbf M_n(M_1)}( w \, u_k \, w^* )\|_\psi = 0,
$$
implying that $v^*xv = \rE_{\mathbf M_n(M_1)}(v^* x v) \in  \mathbf M_n(M_1)$ and the desired assertion is immediate.
\end{proof}

\subsection*{Ultraproduct von Neumann algebras}

Let $M$ be any $\sigma$-finite von Neumann algebra. Define
\begin{align*}
\mathcal I_\omega(M) &= \left\{ (x_n)_n \in \ell^\infty(\N, M) : x_n \to 0\ \text{$\ast$-strongly as } n \to \omega \right\}, \\
\mathcal M^\omega(M) &= \left \{ (x_n)_n \in \ell^\infty(\N, M) :  (x_n)_n \, \mathcal I_\omega(M) \subset \mathcal I_\omega(M) \text{ and } \mathcal I_\omega(M) \, (x_n)_n \subset \mathcal I_\omega(M)\right\}.
\end{align*}

We have that the {\em multiplier algebra} $\mathcal M^\omega(M)$ is a C$^*$-algebra and $\mathcal I_\omega(M) \subset \mathcal M^\omega(M)$ is a norm closed two-sided ideal. Following \cite{Oc85}, we define the {\em ultraproduct von Neumann algebra} $M^\omega$ by $M^\omega = \mathcal M^\omega(M) / \mathcal I_\omega(M)$. We denote the image of $(x_n)_n \in \mathcal M^\omega(M)$ by $(x_n)^\omega \in M^\omega$. 

For all $x \in M$, the constant sequence $(x)_n$ lies in the multiplier algebra $\mathcal M^\omega(M)$. We will then identify $M$ with $(M + \mathcal I_\omega(M))/ \mathcal I_\omega(M)$ and regard $M \subset M^\omega$ as a von Neumann subalgebra. The map $\rE_\omega : M^\omega \to M : (x_n)^\omega \mapsto \sigma \text{-weak} \lim_{n \to \omega} x_n$ is a faithful normal conditional expectation. For every faithful normal state $\varphi \in M_\ast$, the formula $\varphi^\omega = \varphi \circ \rE_\omega$ defines a faithful normal state on $M^\omega$. Observe that $\varphi^\omega((x_n)^\omega) = \lim_{n \to \omega} \varphi(x_n)$ for all $(x_n)^\omega \in M^\omega$.

Let $Q \subset M$ be any von Neumann subalgebra with faithful normal conditional expectation $\rE_Q : M \to Q$. Choose a faithful normal state $\varphi \in M_\ast$ such that $\varphi = \varphi \circ \rE_Q$. We have $\ell^\infty(\N, Q) \subset \ell^\infty(\N, M)$, $\mathcal I_\omega(Q) \subset \mathcal I_\omega(M)$ and $\mathcal M^\omega(Q) \subset \mathcal M^\omega(M)$. We will then identify $Q^\omega = \mathcal M^\omega(Q) / \mathcal I_\omega(Q)$ with $(\mathcal M^\omega(Q) + \mathcal I_\omega(M)) / \mathcal I_\omega(M)$ and regard $Q^\omega \subset M^\omega$ as a von Neumann subalgebra. Observe that the norm $\|\cdot\|_{(\varphi |_Q)^\omega}$ on $Q^\omega$ is the restriction of the norm $\|\cdot\|_{\varphi^\omega}$ to $Q^\omega$. Observe moreover that $(\rE_Q(x_n))_n \in \mathcal I_\omega(Q)$ for all $(x_n)_n \in \mathcal I_\omega(M)$ and $(\rE_Q(x_n))_n \in \mathcal M^\omega(Q)$ for all $(x_n)_n \in \mathcal M^\omega(M)$. Therefore, the mapping $\rE_{Q^\omega} : M^\omega \to Q^\omega : (x_n)^\omega \mapsto (\rE_Q(x_n))^\omega$ is a well-defined conditional expectation satisfying $\varphi^\omega \circ \rE_{Q^\omega} = \varphi^\omega$. Hence, $\rE_{Q^\omega} : M^\omega \to Q^\omega$ is a faithful normal conditional expectation.

Put $\mathcal H = \rL^2(M)$. The {\em ultraproduct Hilbert space} $\mathcal H^\omega$ is defined to  be the quotient of $\ell^\infty(\N, \mathcal H)$ by the subspace consisting in sequences $(\xi_n)_n$ satisfying $\lim_{n \to \omega} \|\xi_n\|_{\mathcal H} = 0$. We denote the image of $(\xi_n)_n \in \ell^\infty(\N, \mathcal H)$ by $(\xi_n)_\omega \in \mathcal H^\omega$. The inner product space structure on the Hilbert space $\mathcal H^\omega$ is defined by $\langle (\xi_n)_\omega, (\eta_n)_\omega\rangle_{\mathcal H^\omega} = \lim_{n \to \omega} \langle \xi_n, \eta_n\rangle_{\mathcal H}$. The standard Hilbert space $\rL^2(M^\omega)$ can be embedded into $\mathcal H^\omega$ as a closed subspace {\em via} the mapping $\rL^2(M^\omega) \to \mathcal H^\omega : (x_n)^\omega \xi_{\varphi^\omega} \mapsto (x_n \xi_\varphi)_\omega$. For more on ultraproduct von Neumann algebras, we refer the reader to \cite{AH12, Oc85}.

In Section \ref{proofs-main-results}, we will need the following well-known fact about ultraproducts of semifinite von Neumann algebras. Let $(M, \Tr)$ be any semifinite $\sigma$-finite von Neumann endowed with a faithful normal semifinite trace. Then the ultraproduct von Neumann algebra $M^\omega$ is semifinite and the weight $\Tr \circ \rE_\omega$ is tracial on $M^\omega$ (see \cite[Lemma 4.26]{AH12}).

In Appendix \ref{appendix}, we will need the following result about the centralizer $(M^\omega)^{\varphi^\omega}$ of the ultraproduct state $\varphi^\omega$.

\begin{prop}\label{proposition-centralizer-ultraproduct}
Let $(M, \varphi)$ be any $\sigma$-finite von Neumann algebra endowed with a faithful normal state and $\omega \in \beta(\N) \setminus \N$ any nonprincipal ultrafilter.
\begin{enumerate}
\item If $M \neq \C 1$, then $(M^\omega)^{\varphi^\omega} \neq \C 1$.
\item If $M$ is diffuse, then $(M^\omega)^{\varphi^\omega}$ is diffuse.
\end{enumerate}
\end{prop}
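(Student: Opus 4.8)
The plan is to reduce statement (2) to statement (1) applied to corners, and to prove (1) directly by constructing a suitable nontrivial element of the centralizer of $\varphi^\omega$ from the structure of $M$. For (1), first I would reduce to the case where $M$ is a factor, or at least where $\varphi$ has reasonably understood modular theory, by invoking Lemma \ref{lemma-centralizer}: after perturbing $\varphi$ to a nearby faithful normal state $\psi$ we still have $M^\psi$ diffuse whenever $M$ is diffuse, and in the general (non-diffuse, non-trivial) case of (1) we can at least split off a piece. Actually the cleanest route: if $M\neq\C 1$, then either $M$ has a diffuse summand, or $M$ has a type $\mathrm I_n$ summand with $n\geq 2$, or $M^\varphi\neq\C1$ already. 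In the last case we are done since $M^\varphi\subset (M^\omega)^{\varphi^\omega}$. So assume $M^\varphi=\C1$; then by Connes' theorem $M$ must be a type $\mathrm{III}_1$, $\mathrm{III}_\lambda$ or $\mathrm{III}_0$ factor (or there is a piece forcing this), and I need to produce a nontrivial $\varphi^\omega$-centralizing sequence.

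The key construction: pick a nontrivial partial isometry or unitary-type element $v\in M$ and replace it by a sequence $(v_n)_n$ so that $\sigma^\varphi_t(v_n)$ is close to $v_n$ for $t$ in larger and larger windows, i.e. $v_n$ becomes ``spectrally concentrated near $0$'' for the modular flow. Concretely, since $M\neq\C1$, fix $x\in M$ with $\varphi$-orthogonal real and imaginary analytic parts and apply a Fejér-type averaging: set $y_n=\int_\R f_n(t)\,\sigma^\varphi_t(x)\,dt$ where $f_n\geq0$, $\int f_n=1$, $\widehat{f_n}\to 1$ pointwise, so that $\sigma^\varphi_s(y_n)-y_n\to0$ in $\|\cdot\|_\varphi^\sharp$ for each fixed $s$, uniformly on compacts, while $y_n\not\to\C1$ because the zero Fourier mode of $x$ under the modular flow is nonzero (this is where one uses that $M\neq\C1$: one can choose $x$ whose modular-flow-invariant projection is a nonscalar element, or argue that if every such average collapsed to a scalar then $M^\varphi=\C1$ and moreover $M$ would be too small — here the standard Arveson spectral subspace / almost periodic approximation machinery, or the Connes–Takesaki analysis, gives the needed nonscalar invariant piece; in the genuinely $\mathrm{III}_0$/$\mathrm{III}_1$ case one instead uses the ergodic-theoretic fact that the flow has nontrivial almost-invariant vectors at every scale). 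Then $(y_n)_n\in\mathcal M^\omega(M)$ represents an element of $M^\omega$ that commutes with $\sigma^{\varphi^\omega}_s$ for all $s$, hence lies in $(M^\omega)^{\varphi^\omega}$, and is non-scalar. The main obstacle is exactly ensuring the limiting element is genuinely nonscalar while centralizing: the averaging kills the ``oscillating'' part but one must verify it does not kill everything, which requires an honest lower bound surviving the ultralimit — the cleanest way is to first handle the almost periodic / $\mathrm{III}_\lambda$ case (eigenoperators of $\sigma^\varphi$ with eigenvalue $1$ exist and are nonscalar once $M^\varphi$ is nontrivial, else use an eigenoperator with eigenvalue $\lambda$ and take a slowly-varying power) and then reduce $\mathrm{III}_0$ and $\mathrm{III}_1$ to it via the flow of weights or via the fact that for any $\varepsilon$ there is a nonscalar $v$ with $\|\sigma^\varphi_t(v)-v\|_\varphi<\varepsilon\|v\|_\varphi$ for $|t|\leq 1/\varepsilon$, which is the substance of the classical characterization of these types.

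Finally, for (2): assume $M$ is diffuse. I want $(M^\omega)^{\varphi^\omega}$ diffuse, i.e. it has no minimal projections. Suppose $p\in(M^\omega)^{\varphi^\omega}$ is a nonzero projection; then $pM^\omega p$ with state $\varphi^\omega_p=\frac{\varphi^\omega(p\,\cdot\,p)}{\varphi^\omega(p)}$ is again an ultraproduct-type algebra, and more usefully $pM^\omega p\supset (pMp)$-like pieces. The cleanest argument: represent $p=(p_n)^\omega$ with $p_n\in M$ projections (possible since $p\in(M^\omega)^{\varphi^\omega}$, by the usual spectral-approximation lemma for ultraproducts together with faithfulness of $\rE_\omega$ on the centralizer); the cutdowns $p_nMp_n$ are nontrivial for $\omega$-almost all $n$ unless $p$ is an abstract minimal projection, and since $M$ is diffuse, $p_nMp_n\neq\C p_n$, so by part (1) applied to $p_nMp_n$ with state $\frac{\varphi(p_n\,\cdot\,p_n)}{\varphi(p_n)}$ — together with the identification of $(p M p)^\omega$-centralizer pieces sitting inside $p(M^\omega)^{\varphi^\omega}p$ — we obtain a nonscalar element of $p(M^\omega)^{\varphi^\omega}p$, so $p$ is not minimal in $(M^\omega)^{\varphi^\omega}$. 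Hence $(M^\omega)^{\varphi^\omega}$ has no minimal projections, i.e. it is diffuse. The bookkeeping obstacle here is making the ``$(pMp)^\omega$ sits inside $p M^\omega p$ compatibly with the centralizer'' precise when $p$ is not literally in $M$ but only in $M^\omega$; this is handled by choosing the representing sequence $(p_n)$ inside $M^\varphi$-like sets (after the perturbation of Lemma \ref{lemma-centralizer}, or directly since $p$ commutes with $(\sigma^\varphi_t)$ one can choose $p_n$ nearly fixed by $\sigma^\varphi$) so that the corner constructions are $\varphi$-preserving and pass to the ultraproduct.
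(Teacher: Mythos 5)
Your reduction for (1) starts the same way as the paper's: if $M^\varphi \neq \C 1$ you are done since $M^\varphi \subset (M^\omega)^{\varphi^\omega}$, and otherwise $M$ must be a factor of type ${\rm III_1}$ (this is sharper than what you state: by \cite[Lemma 5.3]{AH12}, $M^\varphi = \C 1$ with $M \neq \C1$ forces $M$ to be a type ${\rm III_1}$ \emph{factor} --- types ${\rm III_0}$ and ${\rm III_\lambda}$, $0<\lambda<1$, always have diffuse centralizers by Connes's thesis, and this sharper dichotomy is what makes the whole proposition collapse to the ${\rm III_1}$ case). But your treatment of that remaining case has a genuine gap, and it is exactly the hard point. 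The F\'ejer-type averages $y_n = \int_\R f_n(t)\sigma^\varphi_t(x)\,dt$ with $\sigma^\varphi_s(y_n)-y_n \to 0$ for each $s$ converge (weakly, along any subnet) to the $\varphi$-preserving conditional expectation of $x$ onto the fixed-point algebra $M^\varphi$, which is $\C1$ in the case at hand; so the construction as written kills everything, as you half-acknowledge. What is needed is a \emph{uniform} lower bound: elements $v_n$ with $\|\sigma^\varphi_t(v_n)-v_n\|_\varphi \to 0$ uniformly on compacts \emph{and} $\|v_n - \varphi(v_n)1\|_\varphi \geq c > 0$ independently of $n$. This does not follow from "the classical characterization of the types" (spectral invariants give no such quantitative almost-invariant, uniformly nonscalar elements); it is the content of Connes--St{\o}rmer transitivity, packaged in \cite[Theorem 4.20]{AH12} as the statement that $(M^\omega)^{\varphi^\omega}$ is a type ${\rm II_1}$ factor when $M$ is ${\rm III_1}$. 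The paper simply cites that theorem; your plan would have to reprove it, and the averaging idea you propose does not do so.

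For (2) your route also diverges from the paper's and runs into a real obstruction. You want to lift a projection $p \in (M^\omega)^{\varphi^\omega}$ to projections $p_n \in M$ "nearly fixed by $\sigma^\varphi$" and apply (1) to the corners $p_nMp_n$. In the only case where (2) does not already follow from classical facts about $M^\varphi$ --- namely when (a summand of) $M$ is a ${\rm III_1}$ factor with $M^\varphi = \C1$ --- there are no nontrivial projections in $M^\varphi$ at all, so the state-compatibility of the corner embeddings is precisely what fails; moreover applying (1) to each $p_nMp_n$ produces elements of $((p_nMp_n)^\omega)^{(\varphi_{p_n})^\omega}$, i.e.\ of an iterated ultrapower, and bringing these back into $pM^\omega p$ requires a diagonalization you do not supply. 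The paper avoids all of this with a central decomposition $\sum_n z_n = 1$ by type: the summand with diffuse center has $\mathcal Z(Mz_0) \subset M^\varphi z_0$; semifinite, ${\rm III_0}$ and ${\rm III_\lambda}$ ($\lambda<1$) factor summands have diffuse centralizers already in $M^\varphi$ by Connes; and the ${\rm III_1}$ summands are handled by \cite[Theorem 4.20]{AH12} directly, with no lifting of projections needed. I would recommend restructuring your argument around that decomposition and accepting the Ando--Haagerup input as a citation rather than attempting to rederive it.
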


\begin{proof}
$(1)$ Assume that $M \neq \C 1$. If $M^\varphi \neq \C 1$, then we also have $(M^\omega)^{\varphi^\omega} \neq \C 1$ since $M^\varphi \subset (M^\omega)^{\varphi^\omega}$. If $M^\varphi = \C 1$, then $M$ is a type ${\rm III_1}$ factor by \cite[Lemma 5.3]{AH12}. By \cite[Theorem 4.20]{AH12}, $(M^\omega)^{\varphi^\omega}$ is a type ${\rm II_1}$ factor and hence $(M^\omega)^{\varphi^\omega} \neq \C1$.

$(2)$ Fix a sequence $(z_n)_n$ of central projections in $\mathcal Z(M)$ such that $\sum_n z_n = 1$, $M z_0$ has a diffuse center and $M z_n$ is a diffuse factor for every $n \geq 1$. Observe that $\mathcal Z(M z_0) \subset M^\varphi z_0$ and hence $M^\varphi z_0$ is diffuse. Next, fix $n \geq 1$ such that $z_n \neq 0$ and put $\varphi_{z_n} = \frac{\varphi (z_n\,\cdot\,z_n)}{\varphi(z_n)} \in (M z_n)_\ast$. If $M z_n$ is a semifinite factor, then $M^\varphi z_n = (M z_n)^{\varphi_{z_n}}$ is diffuse. If $M z_n$ is a type ${\rm III_\lambda}$ factor, with $0 \leq \lambda < 1$, then $M^\varphi z_n = (M z_n)^{\varphi_{z_n}}$ is diffuse by  \cite[Th\'eor\`eme 4.2.1 and Th\'eor\`eme 5.2.1]{Co72}. If $M z_n$ is a type ${\rm III_1}$ factor, then $(M^\omega)^{\varphi^\omega}z_n = ((Mz_n)^\omega)^{\varphi_{z_n}^\omega}$ is a type ${\rm II_1}$ factor by \cite[Theorem 4.20]{AH12}. We finally obtain that $(M^\omega)^{\varphi^\omega}z_n = ((Mz_n)^\omega)^{\varphi_{z_n}^\omega}$ is diffuse for all $n$ and hence $(M^\omega)^{\varphi^\omega}$ is diffuse.
\end{proof}

\section{Asymptotic orthogonality property}\label{section-AOP}

The phenomenon of {\em asymptotic orthogonality property} inside free group factors was discovered by Popa in his seminal work \cite[Lemma 2.1]{Po83}. The main result of this section is the following optimal asymptotic orthogonality property result inside arbitrary free product von Neumann algebras. To fix notation, for each $i \in \{1, 2\}$, let $(M_i, \varphi_i)$ be any $\sigma$-finite von Neumann algebra endowed with a faithful normal state. Denote by $(M, \varphi) = (M_1, \varphi_1) \ast (M_2, \varphi_2)$ the free product. As usual, denote by $\rE_{M_1} : M \to M_1$ the unique $\varphi$-preserving conditional expectation. Let $Q \subset M_1$ be any diffuse von Neumann subalgebra with expectation. Fix a faithful state $\psi \in M_\ast$ such that $\sigma_t^\psi(Q) = Q$ and $\sigma_t^\psi(M_1) = M_1$ for all $t \in \R$. Observe that $\psi = \psi \circ \rE_{M_1}$.

Theorem \ref{theorem-AOP} below is a simultaneous generalization of \cite[Proposition 3.5]{Ue10} (which only deals with $y\in\ker(\varphi_2)$) and \cite[Theorem 3.1]{Ho14} (which requires the centralizer $(M_1)^{\varphi_1}$ to be diffuse).

\begin{thm}\label{theorem-AOP} 
Keep the same notation as above. For all $x \in  Q'\cap M^\omega$ and all $y,z \in M \ominus M_1$, the vectors 
$$y(x-\rE_{M_1^\omega}(x))\xi_{\psi^\omega}, \; (y\rE_{M_1^\omega}(x) - \rE_{M_1^\omega}(x)z)\xi_{\psi^\omega} \; \text{ and } \; (\rE_{M_1^\omega}(x)-x)z\xi_{\psi^\omega}$$
are mutually orthogonal in the standard Hilbert space $\rL^2(M^\omega)$ where $\xi_{\psi^\omega} \in \mathfrak P^{M^\omega}$ is the canonical representing vector of the ultraproduct state $\psi^\omega$.  
\end{thm}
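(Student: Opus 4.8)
The plan is to reduce the statement to a computation with reduced words and to exploit the fact that a central sequence $x \in Q' \cap M^\omega$ can be approximated in $\|\cdot\|_{\psi^\omega}$ by sequences whose entries commute well with a fixed sequence of Haar-like unitaries in $Q$. Concretely, since $Q \subset M_1$ is diffuse and with expectation, Lemma \ref{lemma-centralizer} lets me modify $\psi$ (harmlessly, since the orthogonality assertion is about vectors in a fixed standard space and the modular data only enters through which subalgebras are globally invariant) so that $Q^{\psi}$, or rather the centralizer of the restricted state, is diffuse; hence I may fix a sequence of unitaries $(u_k)_k$ in that centralizer with $u_k \to 0$ $\sigma$-weakly. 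The point of working in the centralizer is that left and right multiplication by $u_k$ are then isometries for $\|\cdot\|_{\psi}$, which is what makes the asymptotic orthogonality estimates symmetric and clean.

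First I would record the standard decomposition $x = (x - \rE_{M_1^\omega}(x)) + \rE_{M_1^\omega}(x)$ and note that $\rE_{M_1^\omega}(x) \in M_1^\omega$ is itself central for $Q$, while $x - \rE_{M_1^\omega}(x) \in (M \ominus M_1)^\omega$ in the appropriate sense. The three vectors in the statement are $\xi_1 := y(x - \rE_{M_1^\omega}(x))\xi_{\psi^\omega}$, $\xi_2 := (y\rE_{M_1^\omega}(x) - \rE_{M_1^\omega}(x)z)\xi_{\psi^\omega}$ and $\xi_3 := (\rE_{M_1^\omega}(x) - x)z\xi_{\psi^\omega}$; note $\xi_1 + \xi_2 + \xi_3 = yx\xi_{\psi^\omega} - xz\xi_{\psi^\omega}$, which situates the statement as a refinement of $\|yx - xz\|$-type estimates. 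For each of the three inner products $\langle \xi_i, \xi_j\rangle$ I would represent $x$ by a representing sequence $(x_n)_n$, write $y, z$ as finite sums of reduced words (by density, after an $\varepsilon$-approximation and using that $y, z \in M \ominus M_1$ so every such word ends, respectively begins, with a letter from $M_2^\circ$), and expand the inner product as a limit over $n \to \omega$ of ordinary inner products in $\rL^2(M)$ computed via $\psi$.

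The key mechanism is then: in each cross term, inserting the unitary $u_k$ and using centrality $u_k x_n - x_n u_k \to 0$ (in $\|\cdot\|_{\psi}$, as $n \to \omega$) lets me move $u_k$ past $x_n$; meanwhile $u_k \to 0$ $\sigma$-weakly forces the relevant expression, which after using freeness has the shape of $\rE_{M_1}(\,\cdot\, u_k \,\cdot\,)$ sandwiched between pieces of reduced words, to tend to $0$ — this is exactly the kind of estimate packaged in Lemma \ref{lemma-control} and its proof. Since $y, z$ lie in $M \ominus M_1$, the reduced-word bookkeeping shows the $M_1$-valued pieces produced by freeness are flanked by genuine $M_2^\circ$-letters, so the alternating condition is met and the freeness relation $\rE(x_1 \cdots x_n) = 0$ applies; this is what makes the three vectors orthogonal rather than merely almost orthogonal, because the limit is identically zero, not just small. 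I would carry out the three cases $\langle \xi_1, \xi_2\rangle$, $\langle \xi_1, \xi_3\rangle$, $\langle \xi_2, \xi_3\rangle$ in that order, the last two being structurally identical to the first by symmetry (replacing $y$ by $z$ and taking adjoints), so essentially one genuine computation is needed.

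The main obstacle I anticipate is the careful handling of the modular/analytic issues: the vectors involve $\xi_{\psi^\omega}$, so "moving $u_k$ to the right" past $\rE_{M_1^\omega}(x)$ or past $z$ requires either that $u_k$ sit in the centralizer (so that $J^{M^\omega} u_k J^{M^\omega}$ behaves well) or an approximation of $z$ by $\sigma^{\psi}$-analytic elements, exactly as in the four-step structure of the proof of Lemma \ref{lemma-control}. So the plan is to borrow that four-step approximation scheme verbatim — reduce to words, then to one-sided analytic elements, then remove the analyticity — and graft onto it the extra insertion of the central sequence and the $\sigma$-weakly null unitaries $u_k$. A secondary technical point is justifying the interchange of the limit $k \to \infty$ with the limit $n \to \omega$; this is handled by first fixing $\varepsilon$, choosing the word-approximations of $y, z$ and $k$ large depending only on $\varepsilon$ and the fixed data, and only then passing to the limit along $\omega$, so that no diagonal argument across $\omega$ is needed.
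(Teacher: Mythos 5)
Your overall skeleton (write $y,z$ as finite sums of reduced words, convert the right multiplication by $z$ into a commutant operator $J^{M^\omega}z'J^{M^\omega}$ via modular theory, and locate the three vectors inside the free--product decomposition of $\rL^2(M)$) matches the paper's, but the key analytic step is missing, and the mechanism you propose in its place does not supply it. Freeness alone does not kill the cross terms: writing $d = x - \rE_{M_1^\omega}(x)$, the term $\langle y\,d\,\xi_{\psi^\omega},\, d\,z\,\xi_{\psi^\omega}\rangle = \psi^\omega(z^*d^*yd)$ carries two copies of $d$, which lives in the $\rL^2(M_1)\otimes\mathfrak{H}\otimes\rL^2(M_1)$ part of $\rL^2(M)$ with \emph{a priori} arbitrary $M_1$-letters at both ends. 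What actually has to be proved is that, because $x$ commutes with the diffuse algebra $Q\subset M_1$, the components of $(x_n-\rE_{M_1}(x_n))\xi_\psi$ along the subspaces $\mathcal{K}_1=(V\xi_{\varphi_1})\otimes\mathfrak{H}\otimes\rL^2(M_1)$ and $\mathcal{K}_2=(\overline{W\xi_{\varphi_1}})\otimes\mathfrak{H}\otimes(V\xi_{\varphi_1})$ vanish as $n\to\omega$, where $V$ is the finite-dimensional span of the relevant $M_1$-letters of $y$ and $z$. Only after that does the reduced-word bookkeeping you describe yield orthogonality.

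Your proposed mechanism --- insert a $\sigma$-weakly null sequence $(u_k)_k$ of centralizer unitaries commuting with $x$ and invoke a Lemma \ref{lemma-control}-type estimate --- cannot produce this. Since $u_k$ commutes exactly with $x$ and with $\rE_{M_1^\omega}(x)$ and preserves the state, the insertion is an exact identity for every $k$: it merely replaces $y,z$ by $u_k^*yu_k$, $u_k^*zu_k$ and leaves each cross inner product unchanged. Extracting decay in $k$ therefore requires an estimate \emph{uniform} over the representing sequence $(x_n)_n$, and that uniformity is precisely the content of the theorem; your plan to ``fix $k$ depending only on $\varepsilon$ and then pass to the limit along $\omega$'' presupposes it. Lemma \ref{lemma-control} addresses the opposite configuration (a weakly null sequence in $M_1$ flanked by \emph{fixed} elements of $M\ominus M_1$), whereas here the moving object is the central sequence itself, which is not weakly null. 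The missing idea is Popa's $\varepsilon$-orthogonality argument: after perturbing $\psi|_Q$ to $\phi$ with $Q^\phi$ diffuse via Lemma \ref{lemma-centralizer} (and controlling $\|\xi_\psi-\xi_\phi\|_{\rL^2(M)}$ by the Araki--Powers--St{\o}rmer inequality, since the statement concerns the fixed vector $\xi_{\psi^\omega}$), one fixes a single $u\in\mathcal{U}(Q^\phi)$ with $u^k\to0$ $\sigma$-weakly, proves $T^k\mathcal{K}_i\perp_\varepsilon\mathcal{K}_i$ for $T=uJ^MuJ^M$ and $k$ large (possible because $V$ is finite dimensional), and combines this with $\lim_{n\to\omega}\|u^kx_nu^{-k}-x_n\|_\phi=0$ and the parallelogram law to force $\lim_{n\to\omega}\|P_{\mathcal{K}_i}x_n\xi_\phi\|_{\rL^2(M)}=0$. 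Finally, $\langle\xi_1,\xi_3\rangle$ is not obtained from $\langle\xi_1,\xi_2\rangle$ ``by symmetry'': it is the one cross term in which both factors carry the off-$M_1$ part of $x$, and it is exactly where the above estimate is indispensable.
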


\begin{proof}
The proof of Theorem \ref{theorem-AOP} is a reconstruction of \cite[Theorem 3.1]{Ho14} and the new input is the `state replacement' procedure developed in \cite{Ue10}.  

Let $(M^\omega, \rL^2(M^\omega), J^{M^\omega}, \mathfrak{P}^{M^\omega})$ be the standard form of the ultraproduct von Neumann algebra $M^\omega$, which is known to be obtained from the standard form $(M,\rL^2(M),J^M,\mathfrak{P}^M)$ of the original von Neumann algebra $M$ in a rather canonical fashion (see \cite[Corollary 3.27]{AH12}). It suffices to prove, instead of the original assertion, that, for all $z' \in M \ominus M_1$ with the given $x,y$ in the original assertion, the vectors
$$
y(x-\rE_{M_1^\omega}(x))\xi_{\psi^\omega}, \; (y\rE_{M_1^\omega}(x) - J^{M^\omega} z' J^{M^\omega}\rE_{M_1^\omega}(x))\xi_{\psi^\omega} \; \text{ and } \; J^{M^\omega} z' J^{M^\omega}(\rE_{M_1^\omega}(x)-x)\xi_{\psi^\omega}
$$ 
are mutually orthogonal in the standard Hilbert space $\rL^2(M^\omega)$. In fact, by a standard approximation argument we may and do assume that the given $z$ in the original assertion is analytic with respect to the modular automorphism group $\sigma^\psi$. By \cite[Theorem 4.1]{AH12} together with \cite[Lemma VIII.3.18 (ii)]{Ta03}, we have 
\begin{align*}
\rE_{M_1^\omega}(x)z\xi_{\psi^\omega} 
&= 
J^{M^\omega} \sigma_{{\rm i}/2}^\psi(z)^* J^{M^\omega} \rE_{M_1^\omega}(x)\xi_{\psi^\omega}, \\
(\rE_{M_1^\omega}(x)-x)z\xi_{\psi^\omega} 
&=
J^{M^\omega} \sigma_{{\rm i}/2}^\psi(z)^* J^{M^\omega} (\rE_{M_1^\omega}(x)-x)\xi_{\psi^\omega},
\end{align*} 
so that the above new assertion immediately gives the desired one.

For all $i \in \{1, 2\}$, denote by $A_i \subset M_i$ the $\sigma$-weakly dense unital $\ast$-subalgebra of all the analytic elements in $M_i$ with respect to the modular automorphism group $\sigma^{\varphi_i}$ and write $A_i^\circ := A_i \cap M_i^\circ$ with the standard notation $M_i^\circ := \ker(\varphi_i)$. As in the proof of \cite[Theorem 3.1]{Ho14}, we may and will assume that the elements $y$ and $z'$ are analytic with respect to the modular automorphism group $\sigma^\varphi$ and $y$ and $\sigma_{{\rm i}/2}^\varphi(z')^*$ are finite sums of reduced words $w_1,\dots,w_\ell$ and $w'_1,\dots,w'_{\ell'}$ in $A_1 A_2^\circ \cdots A_2^\circ A_1$, respectively. {\it Unlike usual, we call an element in $M_1 M_2^\circ \cdots M_2^\circ M_1$ a reduced word in what follows.} 

Let $V$ be the finite dimensional subspace of $M_1$ obtained by looking at the letters coming from $A_1^\circ \cup \{1\}$ appearing in $y$ in the same fashion as in the proof of \cite[Theorem 3.1]{Ho14}. Namely, $V$ is the linear span of the following $A_1$-letters: 
\begin{itemize}
\item the leftmost $A_1$-letters of the reduced words $w_i, w_i^*$, $1 \leq i \leq \ell$; 
\item the rightmost $A_1$-letters of the reduced words $w'_i, \sigma_{-{\rm i}}^{\varphi}(w'_i{}^*)$, $1 \leq i \leq \ell'$;
\item the leftmost $A_1$-letters of all the reduced words appearing in the elements $w_i^* w_j$, $1 \leq i,j \leq \ell$;
\item the rightmost $A_1$-letters of all the reduced words appearing in the elements $w'_i \sigma_{-{\rm i}}^{\varphi}(w'_j{}^*)$, $1 \leq i,j \leq \ell'$. 
\end{itemize}
Choose an orthonormal basis $e_1,\dots,e_{m}$ of $V$ with respect to the inner product $(a|b)_{\varphi_1} := \varphi_1(b^* a)$ on $M_1$. Denote by $W$ the range of the mapping $a \in M_1 \mapsto a - \sum_{i=1}^m (a|e_i)_{\varphi_1}e_i \in M_1$. It follows that $M_1 = V + W$ is an orthogonal decomposition with respect to the inner product $(\cdot|\cdot)_{\varphi_1}$ defined on $M_1$ as above.

Let $\mathfrak{H}$ be the direct sum of all the alternating tensor products in $\rL^2(M_1)^\circ$ and $\rL^2(M_2)^\circ$ starting and ending with  $\rL^2(M_2)^\circ$. Here $\rL^2(M_i)^\circ$ denotes the orthogonal complement of the canonical representing vector $\xi_{\varphi_i} \in \mathfrak P^{M_i}$ of the given state $\varphi_i$. Thanks to $\ast{\rm-alg}(M_1,M_2) = M_1 + \mathrm{span}(M_1 M_2^\circ\cdots M_2^\circ M_1)$ together with the formula of modular conjugation (see \cite[Proposition II-C]{Ue98a}), the standard Hilbert space $\rL^2(M)$ is naturally identified with $\rL^2(M_1) \oplus \rL^2(M_1)\otimes\mathfrak{H}\otimes \rL^2(M_1)$ as $M_1$-$M_1$-bimodules.  Decompose $\rL^2(M_1)\otimes\mathfrak{H}\otimes \rL^2(M_1)$ into three subspaces $\mathcal{K}_1, \mathcal{K}_2, \mathcal{L}$ defined by 
\begin{align*}
\mathcal{K}_1 &:= (V\xi_{\varphi_1})\otimes\mathfrak{H}\otimes \rL^2(M_1), \\
\mathcal{K}_2 &:= (\overline{W\xi_{\varphi_1}})\otimes\mathfrak{H}\otimes(V\xi_{\varphi_1}), \\\mathcal{L} &:= (\overline{W\xi_{\varphi_1}})\otimes\mathfrak{H}\otimes(\overline{W\xi_{\varphi_1}}).
\end{align*}
It is clear that these subspaces are generated by 
\begin{align*}
&V M_2^\circ \cdots M_2^\circ M_1 \xi_\varphi,\\ 
&W M_2^\circ \cdots M_2^\circ V \xi_\varphi,\\ &W M_2^\circ \cdots M_2^\circ W \xi_\varphi,
\end{align*} 
respectively, in $\rL^2(M)$, where $\xi_\varphi \in \mathfrak P^M$ is the canonical representing vector of the free product state $\varphi$. Remark that the direct summand $\rL^2(M_1)$ in $\rL^2(M)$ is given by $\overline{M_1 \xi_\varphi} = \overline{M_1\xi_\psi}$ thanks to $\psi\circ \rE_{M_1} = \psi$ (see e.g.~\cite[Appendix I]{Ko88}).

Let $\delta>0$ be arbitrarily chosen. By Lemma \ref{lemma-centralizer}, choose a faithful state $\phi_1 \in Q_\ast$ such that $\|\psi|_Q - \phi_1\| < \delta$ and $Q^{\phi_1}$ is diffuse. Denote by $\rE_Q^{M_1} : M_1 \to Q$ the unique $\psi$-preserving conditional expectation and put $\phi := \phi_1\circ \rE_Q^{M_1} \circ  \rE_{M_1}$. Then we have $\phi = \phi \circ \rE_{M_1}$, $Q^\phi$ is diffuse and $\Vert \psi - \phi\Vert = \Vert \psi|_Q - \phi_1\Vert < \delta$ so that the canonical representing vectors $\xi_{\psi}, \xi_\phi \in \mathfrak P^M$ of the states $\psi, \phi$ satisfy $\Vert \xi_\psi - \xi_{\phi}\Vert_{\rL^2(M)} < \delta^{1/2}$ by the Araki-Powers-St{\o}rmer inequality (see \cite[Theorem IX.1.2 (iv)]{Ta03}). In what follows, we denote by $P_\mathcal{X}$ the orthogonal projection from $\rL^2(M)$ onto a (closed) subspace $\mathcal{X}$.

Let $(x_n)_n \in \mathcal M^\omega(M)$ such that $x = (x_n)^\omega$ with $C:= \sup_n \Vert x_n\Vert_\infty$. Then for all $n \in \N$ and all $i \in \{1, 2\}$, we have
\begin{equation}\label{Eq1}
\Vert P_{\mathcal{K}_i}x_n\xi_\psi\Vert_{\rL^2(M)} < C\delta^{1/2} + \Vert P_{\mathcal{K}_i}x_n\xi_\phi\Vert_{\rL^2(M)}. 
\end{equation}
For a while, we will be working with $\Vert P_{\mathcal{K}_i}x_n\xi_\phi\Vert_{\rL^2(M)}$ by the same method used in the proof of \cite[Theorem 3.1]{Ho14}. Since $Q^{\phi}$ is diffuse, we can choose a unitary $u \in \mathcal U(Q^{\phi})$ such that $\lim_{k \to \pm\infty} u^k = 0$ $\sigma$-weakly. Consider the unitary transformation $T : \rL^2(M) \to \rL^2(M) : \xi \to  u \, J^M u J^M \xi =: u\cdot\xi\cdot u^*$. Observe that since $u\in \mathcal U(M^\phi)$ and hence $[u,\xi_\phi] = 0$, for all $n \in \N$, all $i \in \{1, 2\}$ and all $k \in \Z$, we have
\begin{equation}\label{Eq2}
T^k P_{\mathcal{K}_i}x_n\xi_\phi = 
u^k \cdot (P_{\mathcal{K}_i}x_n\xi_\phi)\cdot u^{-k} = P_{u^k\cdot\mathcal{K}_i\cdot u^{-k}} u^k x_n u^{-k}\xi_\phi = P_{T^k\mathcal{K}_i} u^k x_n u^{-k}\xi_\phi.
\end{equation} 
Here is a simple claim, which is just a reconstruction of Claim 1 of \cite[\S3]{Ho14}.

\begin{claim}
For any $\varepsilon>0$, there exists $k_0 \in \mathbb{N}$ such that for all $i\in \{1,2\}$, all $\xi,\eta \in \mathcal{K}_i$ and all $k \geq k_0$, we have $|\langle T^k\xi,\eta \rangle_{\rL^2(M)}| \leq \varepsilon \Vert\xi\Vert_{\rL^2(M)}\Vert\eta\Vert_{\rL^2(M)}$, that is, $T^k\mathcal{K}_i \perp_\varepsilon \mathcal{K}_i$ in the sense of \cite[Definition 2.1]{Ho12a}. 
\end{claim}

\begin{proof}[Proof of the Claim]
Denote by $J^{M_1}$ the modular conjugation on $\rL^2(M_1)$. For $\xi = \sum_{i=1}^m (e_i\xi_{\varphi_1})\otimes \xi_i, \eta = \sum_{j=1}^m (e_j\xi_{\varphi_1})\otimes \eta_j \in \mathcal{K}_1$ inside $\rL^2(M_1)\otimes (\mathfrak{H}\otimes \rL^2(M_1))$, we have
\begin{align*}
|\langle T^k\xi,\eta\rangle_{\rL^2(M)}| 
&\leq \sum_{i,j=1}^m |(u^k e_i|e_j)_{\varphi_1}|\,\Vert \xi_{i}\Vert_{\rL^2(M)}\,\Vert\eta_{j}\Vert_{\rL^2(M)} \\
&\leq 
\max_{1\leq i,j \leq m}|(u^k e_i|e_j)_{\varphi_1}| \times \Vert\xi\Vert_{\rL^2(M)}\,\Vert\eta\Vert_{\rL^2(M)}. 
\end{align*}
Similarly, for $\xi' = \sum_{i=1}^m \xi'_i\otimes (e_i\xi_{\varphi_1}), \eta' = \sum_{j=1}^m \eta'_j\otimes (e_j\xi_{\varphi_1}) \in \mathcal{K}_2$ inside $(\rL^2(M_1)\otimes \mathfrak{H})\otimes \rL^2(M_1)$, we have
\begin{align*}
|\langle T^k\xi',\eta'\rangle_{\rL^2(M)}| 
&\leq \sum_{i,j=1}^m \Vert \xi'_i\Vert_{\rL^2(M)}\,\Vert\eta'_j\Vert_{\rL^2(M)}\,|\langle u^{-k} J^{M_1}e_j\xi_{\varphi_1},J^{M_1}e_i\xi_{\varphi_1}\rangle_{\rL^2(M_1)}| \\
&\leq 
\max_{1\leq i,j \leq m}|\langle u^{-k} J^{M_1}e_j\xi_{\varphi_1},J^{M_1}e_i\xi_{\varphi_1}\rangle_{\rL^2(M_1)}| \times \Vert\xi'\Vert_{\rL^2(M)}\,\Vert\eta'\Vert_{\rL^2(M)}. 
\end{align*}

These two facts together with $\lim_{k\to\pm\infty}u^k = 0$ $\sigma$-weakly imply the desired assertion.
\end{proof}

Combining Equation \eqref{Eq2} with the parallelogram law, for all $n \in \N$, all $i \in \{1, 2\}$ and all $k \in \Z$, we have 
\begin{align*}
\Vert P_{\mathcal{K}_i}x_n\xi_\phi\Vert_{\rL^2(M)}^2 
&= \Vert T^k P_{\mathcal{K}_i}x_n\xi_\phi\Vert_{\rL^2(M)}^2 \\
&\leq 
2\Vert(u^k x_n u^{-k} - x_n)\xi_\phi\Vert_{\rL^2(M)}^2 + 2\Vert P_{T^k\mathcal{K}_i} x_n\xi_\phi\Vert_{\rL^2(M)}^2. 
\end{align*}
Thanks to this and the above Claim and since $x\in Q'\cap M^\omega$, the $\varepsilon$-orthogonality technique from \cite[Proposition 2.3]{Ho12a} works to show that $\lim_{n\to\omega}\Vert P_{\mathcal{K}_i}x_n\xi_\phi\Vert_{\rL^2(M)} = 0$ in the same way as in the proof of Claim 2 in \cite[\S3]{Ho14}. Consequently, we have $\lim_{n\to\omega}\Vert P_{\mathcal{K}_i}x_n\xi_\psi\Vert_{\rL^2(M)} \leq C\delta^{1/2}$. Since $\delta>0$ can be arbitrarily small, we finally obtain 
\begin{equation}\label{Eq3}
\lim_{n\to\omega} \Vert P_{\mathcal{K}_i}x_n\xi_\psi\Vert_{\rL^2(M)} = 0, \forall i\in \{1,2\}. 
\end{equation}

It is standard, see \cite[Theorem 3.7]{AH12}, that $\rL^2(M^\omega)$ is embedded into the ultraproduct Hilbert space $\rL^2(M)^\omega$ by $(a_n)^\omega\xi_{\varphi^\omega} \mapsto (a_n\xi_\varphi)_\omega$ for $(a_n)^\omega \in M^\omega$ with representing sequence $(a_n)_n \in \mathcal M^\omega(M)$. Remark that the other mapping $(a_n)^\omega\xi_{\psi^\omega} \mapsto (a_n\xi_\psi)_\omega$ gives exactly the same embedding since we already fix the choice (or realization) of standard forms. By \eqref{Eq3} together with \cite[Proposition 3.15, Corollary 3.27, Corollary 3.28]{AH12}, we obtain 
\begin{align*}
y(x-\rE_{M_1^\omega}(x))\xi_{\psi^\omega} 
&= 
\left(yP_\mathcal{L}x_n\xi_\psi \right)_\omega, \\
(y\rE_{M_1^\omega}(x)- J^{M^\omega} z' J^{M^\omega}\rE_{M_1^\omega}(x))\xi_{\psi^\omega} 
&= \left((y\rE_{M_1}(x_n)-J^{M} z' J^{M}\rE_{M_1}(x_n))\xi_\psi\right)_\omega, \\
J^{M^\omega} z' J^{M^\omega}(\rE_{M_1^\omega}(x)-x)\xi_{\psi^\omega} 
&= 
\left(-J^{M} z' J^{M} P_\mathcal{L}x_n\xi_\psi \right)_\omega
\end{align*}
inside $\rL^2(M)^\omega$. Note that $yP_\mathcal{L}x_n\xi_\psi$ sits in the closed linear span of $w_i WM_2^\circ \cdots M_2^\circ W\xi_\varphi$, $1 \leq i \leq \ell$, and $J^{M} z' J^{M} P_\mathcal{L}x_n\xi_\psi$ sits in the closed linear span of $W M_2^\circ \cdots M_2^\circ W w'_j\xi_\varphi$, $1 \leq j \leq \ell'$. Moreover, note that $(y\rE_{M_1}(x_n)- J^Mz' J^M \rE_{M_1}(x_n))\xi_\psi \in (y+J^Mz'J^M)\overline{M_1\xi_\psi} = (y+J^Mz'J^M)\overline{M_1\xi_\varphi}$ ({\it n.b.}~$\psi = \psi\circ \rE_{M_1}$) as well as that $J^Mz' J^M b\xi_\varphi = b \sigma_{{\rm i}/2}^\varphi(z')^*\xi_\varphi$ for every $b \in M_1$ by \cite[Lemma VIII.3.18 (ii)]{Ta03}. This shows that $(y\rE_{M_1}(x_n)- J^Mz' J^M \rE_{M_1}(x_n))\xi_\psi$ sits in the closed linear span of
$(w_i M_1 + M_1 w'_j)\xi_\varphi$, $1 \leq i \leq \ell$ and $1 \leq j \leq \ell'$.

Observe that the choice of $V$ makes the subspaces $w_i W M_2^\circ \cdots M_2^\circ W\xi_\varphi$, $W M_2^\circ \cdots M_2^\circ W w'_j\xi_\varphi$, $(w_i M_1 + M_1 w'_j)\xi_\varphi$ mutually orthogonal for all $1 \leq i \leq \ell$ and all $1 \leq j \leq \ell'$. This can easily be checked exactly in the same way as in Claim 3 of \cite[\S3]{Ho14} (which looks complicated but not difficult). Therefore, $y(x-\rE_{M_1^\omega}(x))\xi_{\psi^\omega}$, $(y\rE_{M_1^\omega}(x)-J^{M^\omega} z' J^{M^\omega}\rE_{M_1^\omega}(x))\xi_{\psi^\omega}$ and $J^{M^\omega} z' J^{M^\omega}(\rE_{M_1^\omega}(x)-x)\xi_{\psi^\omega}$ are mutually orthogonal in $\rL^2(M^\omega)$. This finishes the proof of Theorem \ref{theorem-AOP}. 
\end{proof}

\section{Proofs of Theorem \ref{thmA} and Corollary \ref{corB}}\label{proofs-main-results}

\subsection*{A key deformation/rigidity result for semifinite von Neumann algebras}

Theorem \ref{deformation/rigidity} below relies on Popa's deformation/rigidity theory \cite{Po01, Po03, Po06} and is an adaptation of Peterson's $\rL^2$-rigidity results \cite[Theorems 4.3 and 4.5]{Pe06} for {\em semifinite} von Neumann algebras using Popa's malleable deformations instead of Peterson's $\rL^2$-derivations. 

Recall from \cite{Po03, Po06} that for any inclusion $\mathcal M \subset \widetilde {\mathcal M}$ of semifinite von Neumann algebras with trace preserving conditional expectation, a trace preserving action $\R \to \Aut(\widetilde {\mathcal M}) : t \mapsto \theta_t$ is called a {\em malleable deformation} if there exists a period two trace preserving $\ast$-automorphism $\beta \in \Aut(\widetilde {\mathcal M})$ such that $\beta \circ \theta_t = \theta_{-t} \circ \beta$ for all $t \in \R$. Denote by $\rE_{\mathcal M} : \widetilde {\mathcal M} \to \mathcal M$ the unique trace preserving conditional expectation. We will simply denote by $\|\cdot\|_2$ the $\rL^2$-norm associated with the ambient faithful normal semifinite trace. By \cite[Lemma 2.1]{Po06}, any malleable deformation automatically satisfies the following {\em transversality property}:
$$\|x - \theta_{2t}(x)\|_2 \leq 2 \|\theta_t(x) - \rE_{\mathcal M}(\theta_t(x))\|_2, \forall x \in \mathcal M \cap \rL^2(\mathcal M, \Tr).$$

The main result of this subsection is the following uniform convergence theorem for malleable deformations.

\begin{thm}\label{deformation/rigidity}
Let $\mathcal B \subset \mathcal M \subset \widetilde {\mathcal M}$ be an inclusion of semifinite von Neumann algebras with trace preserving conditional expectations. Let $\R \to \Aut(\widetilde {\mathcal M}) : t \mapsto \theta_t$ be a trace preserving malleable deformation. Let $p \in \mathcal M$ be any nonzero finite trace projection and $\mathcal Q \subset p \mathcal M p$ any von Neumann subalgebra. Assume that the following conditions hold:
\begin{itemize}
\item[(i)] The $p\mathcal Mp$-$p\mathcal Mp$-bimodule $\rL^2(p\widetilde {\mathcal M}p) \ominus \rL^2(p\mathcal Mp)$ is weakly contained in the coarse $p\mathcal Mp$-$p\mathcal Mp$-bimodule $\rL^2(p\mathcal Mp) \otimes \rL^2(p\mathcal Mp)$.
\item[(ii)] The von Neumann algebra $\mathcal Q$ has no amenable direct summand.
\item[(iii)] There exists a nonprincipal ultrafilter $\omega \in \beta(\N) \setminus \N$ such that $\mathcal Q' \cap (p \mathcal M p)^\omega \npreceq_{\mathcal M^\omega} \mathcal B^\omega$.
\item[(iv)] Denote by $\rE_{\mathcal B} : \mathcal M \to \mathcal B$ the unique trace preserving conditional expectation. For every net $(v_i)_{i \in I}$ of unitaries in $\mathcal U(p \mathcal M p)$ satisfying $\lim_i \|\rE_{\mathcal B}(b^* v_i a)\|_2 = 0$ for all $a, b \in p\mathcal M$, we have $\lim_i \|\rE_{\mathcal M}(d^* v_i c)\|_2 = 0$ for all $c, d \in p(\widetilde {\mathcal M} \ominus \mathcal M)$.
\end{itemize}
Then the map $\R \to \Aut(\widetilde {\mathcal M}) : t \mapsto \theta_t$ converges uniformly on $\Ball(\mathcal Q)$ in $\|\cdot\|_2$ as $t \to 0$.
\end{thm}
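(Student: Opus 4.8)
The plan is to follow Popa's classical spectral gap argument for malleable deformations, adapted to the semifinite setting. Suppose for contradiction that $t \mapsto \theta_t$ does not converge uniformly on $\Ball(\mathcal Q)$ in $\|\cdot\|_2$. By the transversality property, this is equivalent to the statement that $\sup_{x \in \Ball(\mathcal Q)} \|\theta_t(x) - \rE_{\mathcal M}(\theta_t(x))\|_2$ does not tend to $0$ as $t \to 0$; equivalently, there is a constant $c > 0$, a sequence $t_n \to 0$, and elements $x_n \in \Ball(\mathcal Q)$ with $\|\theta_{t_n}(x_n) - \rE_{\mathcal M}(\theta_{t_n}(x_n))\|_2 \geq c$ for all $n$. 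First I would package the deformation vectors $\xi_n := \theta_{t_n}(x_n) - \rE_{\mathcal M}(\theta_{t_n}(x_n)) \in \rL^2(p\widetilde{\mathcal M}p) \ominus \rL^2(p\mathcal Mp)$ and record that, because $t_n \to 0$ and the $x_n$ are uniformly bounded, these vectors are \emph{almost central} for the $p\mathcal Mp$-action: for every fixed $a \in p\mathcal Mp$ one has $\|a \xi_n - \xi_n a\|_2 \to 0$ (this uses that $\|a\theta_{t_n}(x_n) - \theta_{t_n}(x_n)a\|_2 = \|\theta_{t_n}(\theta_{-t_n}(a)x_n - x_n\theta_{-t_n}(a))\|_2 \to \|ax - xa\|_2$ type estimates combined with $\theta_{t_n} \to \id$ pointwise in $\|\cdot\|_2$, plus that $\rE_{\mathcal M}$ is $p\mathcal Mp$-bimodular).

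**Next I would invoke hypothesis (i)** to transport this to the coarse bimodule: since $\rL^2(p\widetilde{\mathcal M}p) \ominus \rL^2(p\mathcal Mp)$ is weakly contained in $\rL^2(p\mathcal Mp) \otimes \rL^2(p\mathcal Mp)$, the almost-central, non-small vectors $\xi_n$ produce a sequence of almost-central vectors of norm bounded below in the coarse bimodule, which by Connes's characterization of amenability for \emph{finite} von Neumann algebras (applied to $p\mathcal Mp$, which is finite) forces the relevant subalgebra — here $\mathcal Q$, or rather a corner of it — to be amenable, or more precisely to have an amenable direct summand. This is where hypothesis (ii) enters to derive the contradiction. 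To make this precise I expect one must first cut down: the coarse-embedding argument combined with almost-centrality typically yields a nonzero central projection $z \in \mathcal Z(\mathcal Q)$ such that $\mathcal Q z$ is amenable, contradicting (ii). The role of hypotheses (iii) and (iv) is more subtle: they are what prevent the deformation from "escaping into $\mathcal B$" — that is, they rule out the alternative that the vectors $\xi_n$, rather than witnessing amenability, witness an intertwining $\mathcal Q' \cap (p\mathcal Mp)^\omega \preceq_{\mathcal M^\omega} \mathcal B^\omega$. Concretely, (iv) is a "malleability-vs-conditional-expectation" compatibility statement (in the style of Lemma \ref{lemma-control}) that lets one pass from control of $\rE_{\mathcal B}$ of a net of unitaries to control of $\rE_{\mathcal M}$ of their images in the bigger algebra; and (iii) says the relative commutant of $\mathcal Q$ in the ultrapower does not intertwine into $\mathcal B^\omega$, which via Theorem \ref{theorem-intertwining}(3) supplies exactly the net of unitaries $(v_i) \subset \mathcal U(\mathcal Q' \cap (p\mathcal Mp)^\omega)$ with $\|\rE_{\mathcal B^\omega}(b^* v_i a)\|_2 \to 0$ needed to feed into (iv).

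**The argument I envision** therefore runs: assume non-uniform-convergence, produce the bounded-below almost-central deformation vectors $\xi_n$; pass to the ultrapower, where they assemble into a single vector $\xi \in \rL^2(p\widetilde{\mathcal M}^\omega p) \ominus \rL^2(p\mathcal M^\omega p)$ (or a sequence) that is genuinely central for $\mathcal Q' \cap (p\mathcal Mp)^\omega$; use (iii)+(iv) to show $\xi$ cannot be supported on the part of the bimodule that comes from $\mathcal B$, i.e. the "intertwining escape route" is closed; conclude that $\xi$ lives in (a weak closure of) the coarse part, so by (i) and Connes's amenability criterion for finite algebras, a corner of $\mathcal Q$ is amenable, contradicting (ii). **The main obstacle** I anticipate is the bookkeeping at the interface of (iii) and (iv): turning "$\mathcal Q' \cap (p\mathcal Mp)^\omega \npreceq \mathcal B^\omega$" into a usable net of unitaries whose $\rE_{\mathcal M}$-images genuinely vanish requires threading the intertwining criterion through the ultrapower carefully, keeping track of the finite-trace projection $p$ and the fact that $\widetilde{\mathcal M} \ominus \mathcal M$ must be handled with the modular-analytic approximations exactly as in the proof of Lemma \ref{lemma-control}. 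A secondary technical point is verifying that the spectral-gap/almost-central vectors remain non-negligible after all these reductions (one must rule out that $\|\xi_n\|_2$ gets killed by the cutdowns), which is handled by the transversality inequality but needs to be checked uniformly in $n$.
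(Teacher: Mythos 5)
There is a genuine gap, and it sits at the very first step. You take arbitrary $x_n \in \Ball(\mathcal Q)$ witnessing the failure of uniform convergence and assert that the deformation vectors $\xi_n = \theta_{t_n}(x_n) - \rE_{\mathcal M}(\theta_{t_n}(x_n))$ are almost central for $p\mathcal Mp$. This is false. Your justification controls $\|a\theta_{t_n}(x_n) - \theta_{t_n}(ax_n)\|_2$ and $\|\theta_{t_n}(x_n)a - \theta_{t_n}(x_n a)\|_2$ via $\theta_{t_n} \to \id$, but the remaining term is $\|\theta_{t_n}(ax_n - x_n a)\|_2 = \|ax_n - x_n a\|_2$, which has no reason to be small: the $x_n$ are arbitrary elements of $\Ball(\mathcal Q)$, not asymptotically central ones, and elements of $\mathcal Q$ do not almost-commute with elements of $p\mathcal Mp$ (nor even with elements of $\mathcal Q$ itself, unless $\mathcal Q$ is abelian). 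If your step were correct, the spectral gap argument would run directly on $\Ball(\mathcal Q)$ and hypotheses (iii) and (iv) would be superfluous --- which should be a warning sign. The whole point of the two extra hypotheses is that the spectral gap mechanism only applies where almost-centrality is available.

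The paper's proof is genuinely two-staged. First (using only (i) and (ii)) it proves uniform convergence of $\theta_t^\omega$ on $\Ball(\mathcal P)$ where $\mathcal P = \mathcal Q' \cap (p\mathcal Mp)^\omega$: there the representing sequences $x_n^{(k)}$ \emph{do} asymptotically commute with $\mathcal Q$, so the cut-down deformation vectors $\eta_i = p\,\xi_i\,p$ are almost $\mathcal Q$-central, left-bounded, and bounded below in norm, and condition (i) plus Connes's amenability criterion forces an amenable direct summand of $\mathcal Q$, contradicting (ii). Second, it descends from $\Ball(\mathcal P)$ to $\Ball(\mathcal Q)$ by Peterson's averaging trick: for fixed $x \in \Ball(\mathcal Q)$ and small $t$, condition (iii) supplies (via Theorem \ref{theorem-intertwining}) unitaries $v_i \in \mathcal U(p\mathcal Mp)$ representing unitaries of $\mathcal P$ which almost commute with $x$, are moved by $\theta_t$ by at most $\varepsilon^2/8$ (by Stage 1), and satisfy $\|\rE_{\mathcal B}(b^* v_i a)\|_2 \to 0$; then one writes
$$\|\delta_t(x)\|_2^2 \leq |\langle v_i\,\delta_t(x)\,v_i^*, \delta_t(x)\rangle| + \|v_i x v_i^* - x\|_2 + 2\|v_i - \theta_t(v_i)\|_2,$$
and condition (iv), applied with $c = d = p\,\delta_t(x) \in p(\widetilde{\mathcal M} \ominus \mathcal M)$, kills the first term since $\langle v_i\,\delta_t(x)\,v_i^*, \delta_t(x)\rangle = \langle \rE_{\mathcal M}(\delta_t(x)^* v_i\,\delta_t(x)), v_i\rangle$. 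Transversality then gives $\|x - \theta_{2t}(x)\|_2 \leq 2\|\delta_t(x)\|_2 \leq \varepsilon$. Your description of (iii) and (iv) as closing an ``escape route into $\mathcal B$'' for a single ultraproduct vector does not reflect this mechanism and, as written, your argument does not go through.
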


\begin{proof}
Put $\mathcal P = \mathcal Q' \cap (p \mathcal M p)^\omega$. For every $t \in \R$, define $\theta_t^\omega \in \Aut(\widetilde {\mathcal M}^\omega)$ by $\theta_t^\omega((x_n)^\omega) = (\theta_t(x_n))^\omega$. We note that the map $\R \to \Aut(\widetilde {\mathcal M}^\omega) : t \mapsto \theta_t^\omega$ need not be continuous. However, exploiting Popa's spectral gap argument \cite{Po06}, we can show the following uniform convergence result.

\begin{claim}\label{claim2-convergence-ultraproduct}
The map $\R \to \Aut(\widetilde {\mathcal M}^\omega) : t \mapsto \theta_t^\omega$ converges uniformly  on $\Ball(\mathcal P)$ in $\|\cdot\|_2$ as $t \to 0$.
\end{claim}

\begin{proof}[Proof of Claim]
For the Claim, we will only use Conditions (i),(ii). Assume by contradiction that the map $\R \to \Aut(\widetilde {\mathcal M}^\omega) : t \mapsto \theta_t^\omega$ does not converge uniformly on $\Ball(\mathcal P)$ in $\|\cdot\|_2$ as $t \to 0$. Thus there exist $c > 0$, a sequence $(t_k)_k$ of positive reals such that $\lim_k t_k = 0$ and a sequence $(X_k)_k$ in $\Ball(\mathcal P)$ such that $\|X_k - \theta_{2t_k}^\omega(X_k)\|_2 \geq 2c$ for all $k \in \N$. Write $X_k = (x_{n}^{(k)})^\omega$ with $x_{n}^{(k)} \in \Ball(p \mathcal M p)$ satisfying $\lim_{n \to \omega} \|y x_n^{(k)} - x_{n}^{(k)} y\|_2 = 0$ and $2c \leq \|X_k - \theta_{2t_k}^\omega(X_k)\|_2 = \lim_{n \to \omega} \|x_n^{(k)} - \theta_{2t_k}(x_n^{(k)})\|_2$ for all $k \in \N$ and all $y \in \mathcal Q$.

Denote by $I$ the directed set of all pairs $(\mathcal F, \varepsilon)$ with $\mathcal F \subset \Ball(\mathcal Q)$ finite subset and $\varepsilon  > 0$. Let $i = (\mathcal F, \varepsilon) \in I$ and put $\delta = \min(\frac{\varepsilon}{6}, \frac{c}{8})$. Choose $k \in \N$ large enough so that $\|p - \theta_{t_k}(p)\|_2 \leq \delta$ and $\|a - \theta_{t_k}(a)\|_2 \leq \varepsilon/6$ for all $a \in \mathcal F$. Then choose $n \in \N$ large enough so that $\|x_n^{(k)} - \theta_{2t_k}(x_n^{(k)})\|_2 \geq c$ and $\|a x_n^{(k)} - x_n^{(k)} a\|_2 \leq \varepsilon /3$ for all $a \in \mathcal F$.
  
Put $\xi_i = \theta_{t_k}(x_n^{(k)}) - \rE_{ \mathcal M }(\theta_{t_k}(x_n^{(k)})) \in \rL^2( \widetilde{\mathcal M} ) \ominus \rL^2( \mathcal M )$ and $\eta_i = p \xi_i p \in \rL^2(p \widetilde{\mathcal M} p) \ominus \rL^2(p \mathcal M p)$. By the transversality property of the malleable deformation $(\theta_t)$, we have 
$$
    \|\xi_i\|_2 \geq \frac{1}{2} \|x_n^{(k)} - \theta_{2t_k} (x_n^{(k)})\|_2 \geq \frac{c}{2}.
$$
Observe that $\|p\theta_{t_k}(x_n^{(k)})p - \theta_{t_k}(x_n^{(k)})\|_2 \leq 2 \|p - \theta_{t_k}(p)\|_2 \leq 2 \delta$. Since $p \in \mathcal M$, by Pythagoras theorem, we moreover have  
$$\|p\theta_{t_k}(x_n^{(k)})p - \theta_{t_k}(x_n^{(k)})\|_2^2 = \|\rE_{ \mathcal M }(p\theta_{t_k}(x_n^{(k)})p - \theta_{t_k}(x_n^{(k)}))\|_2^2 + \|\eta_i - \xi_i\|_2^2$$ 
and hence $\|\eta_i - \xi_i\|_2 \leq 2\delta$. This implies that 
$$\|\eta_i\|_2 \geq \|\xi_i\|_2 - \|\eta_i - \xi_i\|_2 \geq  \frac{c}{2} - 2 \delta \geq \frac{c}{4}.$$
For all $x \in p \mathcal M p$, we have  
$$\|x \eta_i\|_2 = \|(1 - \rE_{ \mathcal M}) (x \theta_{t_k} (x_n^{(k)})p) \|_2 \leq \|x \theta_{t_k}(x_n^{(k)})p\|_2 \leq \|x\|_2.$$
 By Popa's spectral gap argument \cite{Po06}, for all $a \in \mathcal F \subset \Ball(p \mathcal M p)$, we have  
  \begin{align*}
    \| a \eta_i - \eta_i a \|_2
    & =
    \|(1 - \rE_{\mathcal M}) (a \theta_{t_k}( x_n^{(k)} )p - p\theta_{t_k}( x_n^{(k)}) a)\|_2 \\
    & \leq \|a \theta_{t_k}( x_n^{(k)})p - p\theta_{t_k}( x_n^{(k)}) a\|_2 \\
    & \leq
    2 \|a - \theta_{t_k}(a)\|_2 + 2 \|p - \theta_{t_k}(p)\|_2 + \|a x_n^{(k)} - x_n^{(k)} a\|_2 \\
    & \leq \frac{\varepsilon}{3} + \frac{\varepsilon}{3} + \frac{\varepsilon}{3}  = \varepsilon.
  \end{align*}
Hence $\eta_i \in \rL^2(p \widetilde {\mathcal M} p) \ominus \rL^2(p \mathcal M p)$ is a net of vectors satisfying $\limsup_i \|x \eta_i\|_2 \leq \|x\|_2$ for all $x \in p \mathcal M p$, $\liminf_i \|\eta_i\|_2 \geq~\frac{c}{4}$ and $\lim_i \|a \eta_i - \eta_i a\|_2 = 0$ for all $a \in \mathcal Q$. By Condition (i), it follows that $\mathcal Q \subset p \mathcal M p$ has an amenable direct summand by Connes's characterization of amenability \cite{Co75} for finite von Neumann algebras (see also \cite[Lemma 2.3]{Io12}). This is a contradiction to Condition (ii) and finishes the proof of the Claim.
\end{proof}

Next, we use an idea due to Peterson \cite{Pe06} in combination with the above Claim to bring down the uniform convergence to $\Ball(\mathcal Q)$.  In what follows, we will use Conditions (iii),(iv). Let $\varepsilon > 0$. By the above Claim, there exists $t_0 > 0$ such that $\|v - \theta_t^\omega(v)\|_2 < \frac{\varepsilon^2}{16}$ for all $v \in \mathcal U(\mathcal P)$ (recall $\mathcal{P} = \mathcal{Q}'\cap (p\mathcal{M}p)^\omega$) and all $t \in [-t_0, t_0]$. Fix $x \in \Ball(\mathcal Q)$ and $t \in [-t_0, t_0]$. We will show that $\|x - \theta_{2t}(x)\|_2 \leq \varepsilon$.

Denote by $I$ the directed set of all pairs $(\mathcal F, \delta)$ with $\mathcal F \subset \Ball(p \mathcal M)$ finite subset and $\delta > 0$. Fix $i = (\mathcal F, \delta) \in I$. By Condition (iii), we have that $\mathcal P \npreceq_{\mathcal M^\omega} \mathcal B^\omega$. This implies, in particular, that there exists a unitary $u \in \mathcal U(\mathcal P)$ such that $\|\rE_{\mathcal B^\omega}(b^* u a)\|_2 < \delta$ for all $a, b \in \mathcal F$. Since $p \mathcal M p$ is a finite von Neumann algebra, we may write $u = (u_n)^\omega \in \mathcal U(\mathcal P)$ for some $(u_n)_n \in \ell^\infty(\N, p \mathcal M p)$ such that $u_n \in \mathcal U(p \mathcal M p)$ for all $n \in \N$. Observe that $\lim_{n \to \omega} \|u_n x - x u_n\|_2 = \|u x - x u\|_2 = 0$, $\|\rE_{\mathcal B^\omega}(b^* u a)\|_2 = \lim_{n \to \omega} \|\rE_{\mathcal B}(b^* u_n a)\|_2 < \delta$ for all $a, b \in \mathcal F$ and $\|u - \theta_t^\omega(u)\|_2 = \lim_{n \to \omega} \|u_n - \theta_t(u_n)\|_2$. Thus, there exists $n \in \N$ large enough such that $v_i := u_n \in \mathcal U(p\mathcal M p)$ satisfies the following properties:
\begin{itemize}
\item $\|v_i x - x v_i\|_2 \leq \delta$,
\item $\|\rE_{\mathcal B}(b^* v_i a)\|_2 \leq \delta$ for all $a, b \in \mathcal F$ and
\item $\|v_i - \theta_t(v_i)\|_2 \leq \|u - \theta_t^\omega(u)\|_2 + \frac{\varepsilon^2}{16} \leq \frac{\varepsilon^2}{8}$.
\end{itemize}

Put $\delta_t(y) = \theta_t(y) - \rE_{\mathcal M}(\theta_t(y)) \in \widetilde {\mathcal M} \ominus \mathcal M$ for all $y \in p\mathcal Mp$. For all $i \in I$, we have
\begin{align}\label{mixing-delta}
\|\delta_t(x)\|_2^2 = \langle \delta_t(x), \delta_t(x)\rangle &\leq | \langle \delta_t(v_i x v_i^*), \delta_t(x)\rangle | + \|v_i x v_i^* - x\|_2 \\ \nonumber
& \leq | \langle v_i \delta_t(x) v_i^*, \delta_t(x)\rangle | + \|v_i x v_i^* - x\|_2 + 2 \|v_i - \theta_t(v_i)\|_2 \\ \nonumber
& \leq | \langle v_i \delta_t(x) v_i^*, \delta_t(x)\rangle | +  \|v_i x v_i^* - x\|_2 + \frac{\varepsilon^2}{4}.
\end{align}
Since $\lim_i \|\rE_{\mathcal B}(b^* v_i a)\|_2 = 0$ for all $a, b \in p \mathcal M$, we have $\lim_i \|\rE_{\mathcal M}(d^* v_i c)\|_2 = 0$ for all $c, d \in p (\widetilde{\mathcal M} \ominus \mathcal M)$ by Condition (iv). In particular, using Cauchy-Schwarz inequality in $\rL^2(\widetilde{\mathcal M})$, we have 
\begin{align}\label{Cauchy-Schwarz}
\limsup_i | \langle v_i \delta_t(x) v_i^*, \delta_t(x)\rangle | &= \limsup_i | \langle \delta_t(x)^* v_i \delta_t(x), v_i\rangle |  \\ \nonumber
&= \limsup_i |\langle \rE_{ \mathcal M}(\delta_t(x)^* \, v_i \, \delta_t(x)), v_i\rangle |  \\ \nonumber
&\leq \limsup_i \| \rE_{ \mathcal M }((p\delta_t(x))^* \, v_i \, p\delta_t(x)) \|_2 \, \| v_i \|_2  \\ \nonumber
& = 0.
\end{align} 
Combining \eqref{mixing-delta} and \eqref{Cauchy-Schwarz} with the first property of the net $(v_i)_{i \in I}$ and the transversality property of the malleable deformation $(\theta_t)$, we obtain
$$\|x - \theta_{2t}(x)\|_2 \leq 2 \|\delta_t(x)\|_2 \leq \varepsilon.$$
Since the above inequality holds for all $x \in \Ball(\mathcal Q)$ and all $t \in [-t_0, t_0]$, we have obtained that the map $\R \to \Aut(\widetilde {\mathcal M}) : t \mapsto \theta_t$ converges uniformly on $\Ball(\mathcal Q)$ in $\|\cdot\|_2$ as $t \to 0$. This finishes the proof of Theorem \ref{deformation/rigidity}.
\end{proof}

As a corollary to Theorem \ref{deformation/rigidity}, we obtain the following `location' result for subalgebras in semifinite amalgamated free product von Neumann algebras. For each $i \in \{1, 2\}$, let $\mathcal B \subset \mathcal M_i$ be an inclusion of $\sigma$-finite semifinite von Neumann algebras with expectation $\rE_i : \mathcal M_i \to \mathcal B$. Let $\Tr_{\mathcal B}$ be a faithful normal semifinite trace such that the weight $\Tr_{\mathcal B} \circ \rE_i$ is tracial on $\mathcal M_i$ for all $i \in \{1, 2\}$. Then the amalgamated free product $(\mathcal M, \rE) = (\mathcal M_1, \rE_1) \ast_{\mathcal B} (\mathcal M_1, \rE_1)$ is semifinite and the weight $\Tr = \Tr_{\mathcal B} \circ \rE$ is tracial on $\mathcal M$ as remarked in Section \ref{preliminaries}.

\begin{cor}\label{deformation/rigidity-AFP}
Keep the same notation as above. Assume moreover that $\mathcal B$ is amenable. Let $p \in \mathcal M$ be any nonzero finite trace projection and $\mathcal Q \subset p \mathcal M p$ any von Neumann subalgebra with no amenable direct summand such that $\mathcal Q' \cap (p \mathcal Mp)^\omega \npreceq_{\mathcal M^\omega} \mathcal B^\omega$ for some nonprincipal ultrafilter $\omega \in \beta(\N) \setminus \N$.

Then for every nonzero projection $z \in \mathcal Q' \cap p \mathcal M p$, there exists $i \in \{1, 2\}$ such that $\mathcal Qz \preceq_{\mathcal M} \mathcal M_i$.
\end{cor}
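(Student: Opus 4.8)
The strategy is to build a malleable deformation on an enlargement $\widetilde{\mathcal M} \supset \mathcal M$ and apply Theorem \ref{deformation/rigidity}, then convert uniform convergence into intertwining. First I would construct the relevant deformation. Following the standard recipe for amalgamated free products (as in \cite{IPP05}), embed $\mathcal M = \mathcal M_1 \ast_{\mathcal B} \mathcal M_2$ into $\widetilde{\mathcal M} = \widetilde{\mathcal M}_1 \ast_{\mathcal B} \widetilde{\mathcal M}_2$, where $\widetilde{\mathcal M}_i = \mathcal M_i \ast_{\mathcal B} (\mathcal B \ovt \rL(\Z))$ (or $\mathcal M_i \ast_{\mathcal B} (\mathcal B \ovt \rL(\R))$), all amalgamated free products being semifinite by the trace-preservation property recalled in Section \ref{preliminaries}. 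Using a generator $u_i$ of the $i$-th copy of $\rL(\Z)$, set $u_i = \exp(\mathrm{i} h_i)$ with $h_i = h_i^*$ affiliated to $\mathcal B \ovt \rL(\Z)$, and define $\theta_t$ on each $\widetilde{\mathcal M}_i$ by conjugating $\mathcal M_i$ trivially and rotating $u_i \mapsto \exp(\mathrm{i} t h_i) u_i$; this yields a trace-preserving malleable deformation $\theta_t$ of $\widetilde{\mathcal M}$ with the period-two flip $\beta$ that sends $u_i \mapsto u_i^*$. One checks the standard properties: $\theta_t \to \id$ pointwise in $\|\cdot\|_2$, $\beta \circ \theta_t = \theta_{-t} \circ \beta$, and the mixing/relative-mixing condition (iv) of Theorem \ref{deformation/rigidity} — namely that $\rE_{\mathcal B}(b^* v_i a) \to 0$ forces $\rE_{\mathcal M}(d^* v_i c) \to 0$ for $c,d \in p(\widetilde{\mathcal M} \ominus \mathcal M)$ — which follows from the word-computation in the amalgamated free product exactly as in Lemma \ref{lemma-control} applied to $\widetilde{\mathcal M} = \widetilde{\mathcal M}_1 \ast_{\mathcal B} \widetilde{\mathcal M}_2$. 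Condition (i), weak containment of $\rL^2(p\widetilde{\mathcal M}p) \ominus \rL^2(p\mathcal Mp)$ in the coarse bimodule, is the standard bimodule computation for free products (the complement decomposes into bimodules of the form $\rL^2(p\mathcal Mp) \otimes_{\mathcal B} (\cdots) \otimes_{\mathcal B} \rL^2(p\mathcal Mp)$, and amenability of $\mathcal B$ lets one dominate the middle piece by the coarse bimodule).

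With the deformation in place, Conditions (ii) and (iii) are exactly the hypotheses on $\mathcal Q$ ($\mathcal Q$ has no amenable direct summand; $\mathcal Q' \cap (p\mathcal Mp)^\omega \npreceq_{\mathcal M^\omega} \mathcal B^\omega$), so Theorem \ref{deformation/rigidity} applies and gives: $\theta_t \to \id$ uniformly on $\Ball(\mathcal Q)$ in $\|\cdot\|_2$ as $t \to 0$.

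Next I would run the standard ``malleability + uniform convergence $\Rightarrow$ intertwining'' argument of Popa \cite{Po03,Po06} (in the form adapted to semifinite amalgamated free products, cf.\ \cite{IPP05, Ue12}). Fix $t_0 > 0$ small enough that $\|x - \theta_{t_0}(x)\|_2 < \tfrac14 \|p\|_2$ (say) uniformly over $x \in \mathcal U(\mathcal Q)$; then the map $x \mapsto \theta_{t_0}(x)$ does not converge to $0$ weakly when averaged over $\mathcal U(\mathcal Q)$, which, by a $\|\cdot\|_2$-closed-convex-hull argument, produces a nonzero element in $\mathcal Q' \cap \theta_{t_0}(\mathcal Q)' \cap \langle \widetilde{\mathcal M}, e_{\mathcal M}\rangle$ of finite trace, i.e.\ $\mathcal Q \preceq_{\widetilde{\mathcal M}} \theta_{t_0}(\mathcal M)$. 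Applying $\theta_{-t_0/2}$ and using transversality/malleability upgrades this to $\mathcal Q \preceq_{\widetilde{\mathcal M}} \mathcal M$ via a continuity/compactness argument on the parameter, and then the structure of the amalgamated free product $\widetilde{\mathcal M} = \widetilde{\mathcal M}_1 \ast_{\mathcal B} \widetilde{\mathcal M}_2$ — specifically the location theorem for relative commutants and intertwiners of rigid/deformed subalgebras (the amalgamated free product analogue of \cite[Theorem 1.1]{IPP05}, or \cite[Theorem 4.3]{Ue12}) — forces that for the given central projection $z \in \mathcal Q' \cap p\mathcal Mp$, either $\mathcal Qz \preceq_{\mathcal M} \mathcal B$ (hence $\preceq_{\mathcal M} \mathcal M_i$ for either $i$, as $\mathcal B \subset \mathcal M_i$ with expectation), or $\mathcal Qz \preceq_{\mathcal M} \mathcal M_i$ for some $i \in \{1,2\}$. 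One has to run this uniformly in $z$ — i.e.\ first deduce the dichotomy for $\mathcal Q$ itself, then pass to the cut-down by $z$ using that $z \in \mathcal Q' \cap p\mathcal Mp$ so $\mathcal Q z \subset z\mathcal M z$ inherits the non-amenability and the relative-commutant-non-embedding hypotheses on the corner.

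The main obstacle I expect is the passage from ``$\mathcal Q \preceq_{\widetilde{\mathcal M}} \mathcal M$'' back to the usable conclusion inside $\mathcal M$ itself: one needs the amalgamated-free-product location machinery to be available for subalgebras that are merely \emph{with expectation} (not sitting in a centralizer) and for the ambient algebra semifinite rather than finite, and one must be careful that the intertwining $\mathcal Q \preceq_{\widetilde{\mathcal M}} \mathcal M$ produced by the deformation really descends to an intertwining of $\mathcal Q$ into one of the $\mathcal M_i$ inside $\mathcal M$ (rather than only into $\mathcal M$ inside $\widetilde{\mathcal M}$). The mixing Condition (iv), which is what rules out the ``$\mathcal Q \preceq \mathcal B$ only'' degenerate outcome from collapsing the argument, is precisely the technical input that makes this work, and verifying it together with the bimodule Condition (i) in the semifinite amalgamated setting — while routine in spirit — is where the care is needed.
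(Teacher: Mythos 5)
Your overall route coincides with the paper's: enlarge $\mathcal M$ to $\widetilde{\mathcal M} = \mathcal M \ast_{\mathcal B}(\mathcal B \ovt \rL(\F_2))$ (your $\widetilde{\mathcal M}_1 \ast_{\mathcal B}\widetilde{\mathcal M}_2$ with $\widetilde{\mathcal M}_i = \mathcal M_i \ast_{\mathcal B}(\mathcal B\ovt\rL(\Z))$ is the same algebra), verify conditions (i)--(iv) of Theorem \ref{deformation/rigidity} essentially as you indicate --- (i) from amenability of $\mathcal B$ as in \cite[Proposition 3.1]{CH08}, (iv) by a word computation relative to the decomposition $\mathcal M \ast_{\mathcal B}(\mathcal B\ovt\rL(\F_2))$ (note: it is this splitting, not $\widetilde{\mathcal M}_1\ast_{\mathcal B}\widetilde{\mathcal M}_2$, that is relevant for (iv), since the hypothesis and conclusion there concern $\rE_{\mathcal B}$ on $\mathcal M$ and $\rE_{\mathcal M}$ on $\widetilde{\mathcal M}\ominus\mathcal M$; this is \cite[Theorem 2.5, Claim]{BHR12}) --- and conclude uniform convergence of $\theta_t$ on $\Ball(\mathcal Q)$, hence on $\Ball(\mathcal Q z)$ since $z$ is a single fixed element. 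One slip to correct: the deformation as you literally describe it, fixing each $\mathcal M_i$ pointwise and moving only the Haar unitaries $u_i$, restricts to the identity on $\mathcal M$, so uniform convergence on $\Ball(\mathcal Q)$ would be vacuous and yield no intertwining (and $u_i\mapsto \exp(\mathrm{i}th_i)u_i$ does not define an automorphism of $\rL(\Z)$ in any case). The free malleable deformation of \cite{IPP05} does the opposite: $\theta_t$ fixes the $u_i$ and conjugates $\mathcal M_i$ by $u_i^t=\exp(\mathrm{i}th_i)$, while it is the period-two flip $\beta$ that fixes $\mathcal M$ and sends $h_i\mapsto -h_i$.

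The more substantive issue is the final stage. Everything you sketch after uniform convergence --- the averaging argument giving $\mathcal Q\preceq_{\widetilde{\mathcal M}}\theta_{t_0}(\mathcal M)$, the upgrade via malleability, the descent from $\widetilde{\mathcal M}$ to $\mathcal M$, and the location of the intertwiner in one of the $\mathcal M_i$ in the semifinite amalgamated setting, i.e.\ exactly the list of obstacles you flag at the end --- is the content of \cite[Theorem 3.3]{BHR12}, which is stated for precisely this semifinite amalgamated free product, this deformation, and a corner $\mathcal Q z\subset p\mathcal Mp$ with $p$ of finite trace, with uniform convergence on $\Ball(\mathcal Qz)$ as its \emph{only} hypothesis. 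The paper simply invokes it and is done; in particular there is no need to re-verify nonamenability or the relative-commutant condition for the cut-down $\mathcal Qz$ as you propose (and the latter would not obviously pass to the corner anyway). As written, your argument does not close this step: you correctly identify where the difficulty lies but leave it open, so the proof is incomplete unless you either cite \cite[Theorem 3.3]{BHR12} or reproduce its proof.
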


\begin{proof}
Put
$$\widetilde {\mathcal M} = \mathcal M \ast_{\mathcal B} (\mathcal B \ovt \rL(\F_2))$$ 
and consider the trace preserving free malleable deformation $(\theta_t)$ from \cite[Section 2]{IPP05} on $\widetilde {\mathcal M}$ (see \cite[Section 3]{BHR12} for further details). 

We now check that we can apply Theorem \ref{deformation/rigidity} to our situation.
\begin{itemize}
\item[(i)] Since $\mathcal B$ is amenable, the $p\mathcal Mp$-$p\mathcal Mp$-bimodule $\rL^2(p\widetilde {\mathcal M}p) \ominus \rL^2(p\mathcal Mp)$ is weakly contained in the coarse $p\mathcal Mp$-$p\mathcal Mp$-bimodule $\rL^2(p\mathcal Mp) \otimes \rL^2(p\mathcal Mp)$ (see e.g.\ the proof of \cite[Proposition 3.1]{CH08}).
\item[(ii)] By assumption, the von Neumann algebra $\mathcal Q$ has no amenable direct summand.
\item[(iii)] By assumption, we have $\mathcal Q' \cap (p \mathcal Mp)^\omega \npreceq_{\mathcal M^\omega} \mathcal B^\omega$ for some nonprincipal ultrafilter $\omega \in \beta(\N) \setminus \N$.
\item[(iv)] Let $(v_i)_{i \in I}$ be any net of unitaries in $\mathcal U(p \mathcal M p)$ such that $\lim_i \|\rE_{\mathcal B}(b^* v_i a)\|_2 = 0$ for all $a, b \in p \mathcal M$. Since $\widetilde {\mathcal M} = \mathcal M \ast_{\mathcal B} (\mathcal B \ovt \rL(\F_2))$, the proof of \cite[Theorem 2.5, Claim]{BHR12} implies that $\lim_i \|\rE_{\mathcal M}(d^* v_i c)\|_2 = 0$ for all $c, d \in p(\widetilde {\mathcal M} \ominus \mathcal M)$.
\end{itemize}
Therefore, Theorem \ref{deformation/rigidity} implies that the map $\R \to \Aut(\widetilde{\mathcal M}) : t \mapsto \theta_t$ converges uniformly on $\Ball(\mathcal Q)$ in $\|\cdot\|_2$ as $t \to 0$. Fix now any nonzero projection $z \in \mathcal Q' \cap p\mathcal Mp$. We still have that the map $\R \to \Aut(\widetilde{\mathcal M}) : t \mapsto \theta_t$ converges uniformly on $\Ball(\mathcal Q z)$ in $\|\cdot\|_2$ as $t \to 0$. Then, \cite[Theorem 3.3]{BHR12} implies that there exists $i \in \{1, 2\}$ such that $\mathcal Qz \preceq_{\mathcal M} \mathcal M_i$.
\end{proof}

\subsection*{Proof of Theorem \ref{thmA}}

Theorem \ref{thmA} will be a consequence of the following optimal result that generalizes \cite[Theorem D]{Ho14} to arbitrary free product von Neumann algebras.

\begin{thm}\label{theorem-general}
For each $i \in \{1, 2\}$, let $(M_i, \varphi_i)$ be any $\sigma$-finite von Neumann algebra endowed with a faithful normal state. Denote by $(M, \varphi) = (M_1, \varphi_1) \ast (M_2, \varphi_2)$ the free product. Let $Q \subset M$ be any von Neumann subalgebra with separable predual and with expectation such that $Q \cap M_1$ is diffuse and with expectation. Let $\omega \in \beta(\N) \setminus \N$ be any nonprincipal ultrafilter on $\N$.

Denote by $z \in \mathcal Z(Q' \cap M^\omega)$ the unique central projection such that $(Q' \cap M^\omega)z$ is diffuse and $(Q' \cap M^\omega)z^\perp$ is atomic. Then the following conditions hold:
\begin{itemize}
\item $z \in \mathcal Z(Q' \cap M) = \mathcal Z( Q' \cap M_1)$,
\item $Qz \subset z M_1 z$ and
\item $(Q' \cap M^\omega)z^\perp = (Q' \cap M)z^\perp = (Q' \cap M_1)z^\perp$.
\end{itemize}
\end{thm}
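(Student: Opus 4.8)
Set $N:=Q\cap M_1$; by hypothesis $N$ is diffuse and with expectation, hence, composing with $\rE_{M_1}$, also with expectation in $M$. After the routine reduction to $Q$ unital in $M$, I first dispose of the two assertions that do not refer to the ultraproduct dichotomy. Applying Proposition~\ref{proposition-control}(2) with $n=1$, with $\pi$ the inclusion $N\hookrightarrow M_1$ and with the partial isometry $v=1$ (so that $av=v\pi(a)$ holds trivially), I obtain $\mathcal N_{M}(N)\dpr\subset M_1$. Since every unitary of $N'\cap M$ normalizes $N$, this gives $Q'\cap M\subset N'\cap M\subset M_1$, hence $Q'\cap M=Q'\cap M_1$ and $\mathcal Z(Q'\cap M)=\mathcal Z(Q'\cap M_1)$; this also settles the equalities ``$=(Q'\cap M_1)z^\perp$'' once $z^\perp\in M$ is known. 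I then fix, via Lemma~\ref{lemma-centralizer} and the available expectations, a single faithful normal state on $M$ whose modular automorphism group globally preserves $M_1$, $N$ and $Q$, so that Theorem~\ref{theorem-AOP} applies to $N\subset M_1$ and so that the continuous cores interact correctly through the Masuda--Tomatsu sandwich $\core_\varphi(M)\subset\core_{\varphi^\omega}(M^\omega)\subset(\core_\varphi(M))^\omega$.

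The heart of the proof is a dichotomy applied to nonzero projections $p\in\mathcal Z(Q'\cap M^\omega)$: \emph{either} $(Q'\cap M^\omega)p$ has a nonzero diffuse corner, \emph{or} it is atomic. \textbf{Diffuse corner.} Here I claim that a suitable nonzero $p'\leq p$ satisfies $Qp'\subset p'M_1p'$. Decomposing along the maximal amenable direct summand of $Qp'$ (the relevant central projection lies in $\mathcal Z(Q)\subset Q'\cap M\subset M_1$, so it is compatible with the structure and keeps $N$ diffuse on each piece), I treat two cases. \emph{Amenable part:} $Q$ is then AFD, hence has a Cartan subalgebra $A$, and the AFD structure produces central sequences of $Q$ lying in $A^\omega$, which are in particular central sequences of $N$, so Theorem~\ref{theorem-AOP} and its $\varepsilon$-orthogonality consequences apply; combining this with Popa's intertwining Theorem~\ref{theorem-intertwining} and the descent given by Proposition~\ref{proposition-control}(1)--(2), I show that every corner of $A$ embeds with expectation into $M_1$ inside $M$, and regularity of the Cartan inclusion $A\subset Q$ together with a maximality argument upgrades this to $Qp'\subset p'M_1p'$. \emph{Non-amenable part:} I pass to the continuous core, use that $\core_\varphi(M)$ is the semifinite amalgamated free product of $\core_{\varphi_1}(M_1)$ and $\core_{\varphi_2}(M_2)$ over the amenable algebra $\rL_\varphi(\R)$, and invoke Corollary~\ref{deformation/rigidity-AFP}: its ultraproduct-relative-commutant hypothesis is precisely the diffuseness of $(Q'\cap M^\omega)p$, transported into the core by Lemmas~\ref{lemma-intertwining-core} and \ref{lemma-intertwining-core-bis} together with Masuda--Tomatsu, and its conclusion (the factor $\core_{\varphi_2}(M_2)$ being excluded since $N\subset M_1$ is diffuse and $\core_\varphi(N)\npreceq_{\core_\varphi(M)}\rL_\varphi(\R)$) yields that every finite corner of $\core_\varphi(Q)p'$ embeds into $\core_\varphi(M_1)$; a maximality argument inside the core then gives $\core_\varphi(Q)p'\subset\core_\varphi(M_1)p'$, whence $Qp'\subset p'M_1p'$ after intersecting with $M$.

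\textbf{Atomic corner.} If $(Q'\cap M^\omega)p$ is atomic I must show $(Q'\cap M^\omega)p\subset M$; cutting $p$ down I may assume $p$ is a minimal projection of $Q'\cap M^\omega$, so that $Qp$ has trivial relative commutant in $pM^\omega p$, and a spectral-gap/asymptotic-orthogonality analysis at this minimal corner shows that such a central sequence cannot be genuinely new, i.e.\ is already realised in $M$; in particular $p\in M$ and $(Q'\cap M^\omega)p=(Q'\cap M)p$. Assembling the dichotomy by exhausting $\mathcal Z(Q'\cap M^\omega)$ and combining with the maximality arguments, the atomic part of $Q'\cap M^\omega$ is shown to lie in $M$, hence equals $(Q'\cap M)z^\perp$ with $z^\perp,z\in M$; applying the diffuse-corner analysis to $z$ then yields $Qz\subset zM_1z$, and together with the first paragraph these are exactly the three asserted bullets, in particular $z\in\mathcal Z(Q'\cap M)=\mathcal Z(Q'\cap M_1)$. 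I expect the main obstacle to be two-fold: first, the bookkeeping needed to choose one faithful state simultaneously taming $M_1$, $N$, $Q$, their continuous cores and all relevant ultraproducts; and second, in the non-amenable case, the faithful transfer of the (non-)intertwining relations between $M^\omega$ and $(\core_\varphi(M))^\omega$ through the Masuda--Tomatsu sandwich, which is exactly what Lemmas~\ref{lemma-intertwining-core} and \ref{lemma-intertwining-core-bis} are designed to carry out. The atomic case, while softer, still requires the spectral-gap input to preclude genuinely new atomic central sequences.
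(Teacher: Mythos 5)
Your overall architecture matches the paper's: control of $Q'\cap M$ via Proposition \ref{proposition-control}, an amenable/non-amenable dichotomy handled respectively by the AFD--Cartan--asymptotic-orthogonality argument and by passing to the continuous core with Corollary \ref{deformation/rigidity-AFP}, and maximality arguments to globalize. But there is a genuine gap in your treatment of the atomic part of $Q'\cap M^\omega$. You assert that if $p$ is a minimal projection of $Q'\cap M^\omega$ then ``a spectral-gap/asymptotic-orthogonality analysis at this minimal corner shows that such a central sequence cannot be genuinely new, i.e.\ is already realised in $M$.'' Neither tool does this job: spectral gap is unavailable when $Q$ is amenable (the case of main interest for the maximal amenability application), and Theorem \ref{theorem-AOP} only locates elements of $Q'\cap M^\omega$ relative to the decomposition $M=M_1\oplus(M\ominus M_1)$ --- it says nothing about whether an atom of $Q'\cap M^\omega$ is realised inside $M$. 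The facts you need --- that $z\in\mathcal Z(Q'\cap M)$ and $(Q'\cap M^\omega)z^\perp=(Q'\cap M)z^\perp$ --- constitute a general ultraproduct statement with no relation to the free product structure, and the paper obtains them by citing \cite[Theorem 2.3]{HR14}. The same citation is also needed inside the non-amenable case, to upgrade diffuseness of $Q'\cap(qMq)^\omega$ to diffuseness of $Q'\cap((qMq)^\omega)^{\psi_q^\omega}$ so as to produce a unitary there tending to $0$ weakly; your phrase about ``transporting diffuseness into the core'' via Lemmas \ref{lemma-intertwining-core} and \ref{lemma-intertwining-core-bis} presupposes exactly this step.

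A secondary gap: you ``fix a single faithful normal state on $M$ whose modular automorphism group globally preserves $M_1$, $N$ and $Q$'' as if this were routine. Choosing $\psi$ with $\sigma^\psi(Q)=Q$ and $\sigma^\psi(Q\cap M_1)=Q\cap M_1$ is indeed routine (compose the given expectations), but there is no a priori reason such a $\psi$ also satisfies $\sigma^\psi(M_1)=M_1$. The paper derives this from the Connes cocycle $u_t=[D\psi:D\varphi]_t$: the relation $x\,u_t=u_t\,(u_t^*xu_t)$ for $x\in Q\cap M_1$, with $u_t^*(Q\cap M_1)u_t\subset M_1$, together with Proposition \ref{proposition-control}(1) forces $u_t\in\mathcal U(M_1)$, whence $\sigma_t^\psi(M_1)=M_1$ and $\psi=\psi\circ\rE_{M_1}$. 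This step must be supplied before Theorem \ref{theorem-AOP} and the core computations can even be set up. The rest of your outline (the normalizer control giving $Q'\cap M=Q'\cap M_1$, the amenable and non-amenable branches, and the final assembly) is consistent with the paper's proof via Lemmas \ref{lemma-amenable-case} and \ref{lemma-nonamenable-case}, provided the amenable branch is run after reducing to the factor pieces $Qz_n$ and handling the diffuse-center summand $Qz_0$ separately by Proposition \ref{proposition-control}(1).
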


Throughout the rest of this section, let $(M, \varphi) = (M_1, \varphi_1) \ast (M_2, \varphi_2)$ be as in Theorem \ref{theorem-general}. Observe that $M_1$ is diffuse by assumption. Proposition \ref{proposition-control} (1) implies that $(M_1)' \cap M \subset M_1$. Therefore, there exists a unique faithful normal conditional expectation $\rE_{M_1} : M \to M_1$ by \cite[Th\'eor\`eme 1.5.5]{Co72}. We fix a nonprincipal ultrafilter $\omega \in \beta(\N) \setminus \N$. 

For Lemmas \ref{lemma-amenable-case} and \ref{lemma-nonamenable-case} below, we moreover fix a faithful state $\psi \in M_\ast$ such that $\psi = \psi \circ \rE_{M_1}$. Whenever $q \in M^\psi$ is a nonzero projection, put $\psi_q = \frac{\psi(q\,\cdot\,q)}{\psi(q)} \in (qMq)_\ast$. 

\begin{lem}\label{lemma-amenable-case}
Let $q \in (M_1)^{\psi}$ be any nonzero projection and $Q \subset qMq$ any non type ${\rm I}$ subfactor with separable predual that is amenable and globally invariant under the modular automorphism group $\sigma^{\psi_q}$ and such that $Q \cap qM_1q$ is diffuse. Then $Q \subset qM_1q$.
\end{lem}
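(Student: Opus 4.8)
The plan is to exploit the fact that an amenable non type ${\rm I}$ subfactor $Q$ with separable predual is approximately finite dimensional (by Connes's theorem, together with its type ${\rm III}$ analogues), hence possesses a Cartan subalgebra $A \subset Q$ with conditional expectation; moreover, since $Q$ is not of type ${\rm I}$, one can arrange that $A$ itself is diffuse and that $Q = \mathcal N_{qMq}(A)\dpr$. First I would pass to a faithful state for which $A$ is globally invariant under the modular automorphism group and has diffuse centralizer: concretely, using Lemma \ref{lemma-centralizer} applied inside $A$ and composing with the relevant conditional expectations, replace $\psi$ by a nearby state $\phi$ with $\phi = \phi \circ \rE_{M_1} \circ \rE_Q \circ \cdots$ so that $A^{\phi_q}$ is diffuse, $A$ is $\sigma^{\phi_q}$-invariant, and still $A \cap qM_1q$ (equivalently, a diffuse piece coming from $Q \cap qM_1q$) is present. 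The point of moving to such a state is to make the asymptotic orthogonality machinery of Theorem \ref{theorem-AOP} applicable.

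Next, the core step: I would show that $A \preceq_M M_1$, i.e.\ a corner of $A$ embeds with expectation into $M_1$ inside $M$. Suppose not; then by Theorem \ref{theorem-intertwining} there is a net of unitaries $(u_k)_k$ in $\mathcal U(A^{\phi_q})$ with $\rE_{M_1}(b^* u_k a) \to 0$ $\sigma$-strongly for all $a, b \in qM$. Because $A \subset Q$ is Cartan, such unitaries in $A^{\phi_q}$ give rise, via the regularity of $A$ together with a standard local quantization/averaging argument, to a nontrivial central sequence $x = (u_{k}) ^\omega \in A^\omega \subset Q' \cap M^\omega$; more precisely, one uses that $A^\omega$ contains elements commuting with $Q$ (this is where amenability and the AFD structure are essential, exactly as in the strategy outlined in the introduction and in \cite{Ho14}). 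Applying Theorem \ref{theorem-AOP} to this $x$ and to any $y, z \in M \ominus M_1$ forces $y(x - \rE_{M_1^\omega}(x))\xi_{\psi^\omega} \perp (\rE_{M_1^\omega}(x) - x)z\xi_{\psi^\omega}$; combined with the bimodularity and a Pythagoras computation this yields $x = \rE_{M_1^\omega}(x) \in M_1^\omega$, contradicting the fact that $x$ was built from unitaries that are asymptotically orthogonal to $M_1$ under $\rE_{M_1}$. Hence $A \preceq_M M_1$.

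From $A \preceq_M M_1$ I would extract, via Theorem \ref{theorem-intertwining}(2), projections, a matrix amplification, a partial isometry $v$ and a $\ast$-homomorphism $\pi : A \to q\mathbf M_n(M_1)q$ with $av = v\pi(a)$. Since $A$ is diffuse, Proposition \ref{proposition-control}(1) (applied with a corner of $A$ in place of $Q$ there, after cutting so that $\pi$ lands in $M_1$-matrices and is with expectation) forces $v \in (1 \cdot M_1 \otimes \mathbf M_{1,n}(\C))\pi(1)$; then Proposition \ref{proposition-control}(2), applied with $\mathcal N_{qMq}(A)$, gives $v^* \mathcal N_{qMq}(A)\dpr v \subset v^*v\, \mathbf M_n(M_1)\, v^*v$. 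Because $Q = \mathcal N_{qMq}(A)\dpr$, this says a corner of $Q$ sits inside a corner of $M_1$ (up to the amplification), and a standard maximality argument — pushing the partial isometry $v$ around by elements of $Q$ and $\mathcal N_{qMq}(A)$ to enlarge its support projection, using that $Q$ is a factor and $A$ is regular — upgrades this to $Q \subset qM_1q$. The main obstacle I anticipate is the first half of the core step: producing a genuine nontrivial element of $Q' \cap M^\omega$ (a central sequence) out of a net of unitaries in $A^{\phi_q}$ that are asymptotically $\rE_{M_1}$-orthogonal, i.e.\ verifying carefully that the AFD/Cartan structure of the amenable subfactor $Q$ really does supply central sequences living inside $A^\omega$ with the required orthogonality property — this is precisely the delicate point that makes the type ${\rm III}$ case work and where the separability of the predual of $Q$ gets used.
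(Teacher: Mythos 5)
Your overall architecture (Cartan subalgebra $A\subset Q$ from amenability, intertwine a corner of $A$ into $M_1$, then use regularity of $A$ plus Proposition \ref{proposition-control} and a maximality argument) is the same as the paper's strategy, and indeed coincides with the paper's treatment of the type ${\rm III}$ case (the paper handles type ${\rm II_1}$ and ${\rm II_\infty}$ separately with finite-dimensional subfactors $Q_n\cong\mathbf M_{2^n}(\C)$ rather than a Cartan subalgebra, but that is only a packaging difference). However, the step you yourself flag as the main obstacle is a genuine gap as written. If $A\npreceq_M M_1$, Theorem \ref{theorem-intertwining} only hands you a net of unitaries in $\mathcal U(A)$ with $\rE_{M_1}(b^*u_ia)\to 0$; there is no reason such unitaries asymptotically commute with $Q$, and no ``local quantization/averaging argument'' will convert them into elements of $Q'\cap M^\omega$ while preserving the orthogonality to $M_1$. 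The two properties you need --- asymptotic centrality for $Q$ and asymptotic $\rE_{M_1}$-orthogonality --- must be produced \emph{simultaneously}, and that is exactly what your sketch does not supply.

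The paper's mechanism is concrete and is the missing idea: write $\mathcal R=\bigvee_n\mathcal R_n$ with $\mathcal R_n$ finite subequivalence relations, so $Q_n=\rL(\mathcal R_n)$ exhausts $Q$ and, by the Kadison diagonalization \eqref{equation-inclusion}, the inclusion $Q_n'\cap A\subset A$ has \emph{essentially finite index}. Lemma \ref{lemma-finite-index} then transfers the hypothesis $Az\npreceq_M M_1$ down to $(Q_n'\cap A)z\npreceq_M M_1$ for every $n$, so one can pick $u_n\in\mathcal U((Q_n'\cap A)z)$ with $\|\rE_{M_1}(u_n)\|_\psi\le\frac1{n+1}$; these commute with $Q_n$ \emph{exactly}, hence $(u_n)^\omega\in(Qz)'\cap(Az)^\omega$ with $\rE_{M_1^\omega}((u_n)^\omega)=0$, and the AOP contradiction runs (note also that the contradiction is not that ``$x\in M_1^\omega$ for arbitrary $y,z$'': one must take $y=z=u_n-\rE_{M_1}(u_n)$ and use $[u,u_n]=0$ to get the lower bound $\|z\|_\psi-\|\rE_{M_1}(u_n)\|_\psi$). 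Two further points you gloss over: the intertwining claim must be proved for $Az^\perp$ for \emph{every} nonzero $z^\perp\in\mathcal Z(Q'\cap qMq)$, not just for $A$ itself, or the maximality argument does not close; and in the type ${\rm III}$ case one needs the extra step of replacing $v$ so that $vv^*$ becomes central in $z^\perp\mathcal N_{qMq}(A)\dpr z^\perp$ (using that this algebra is type ${\rm III}$) before Proposition \ref{proposition-control} (1) can be applied to $(Q\cap qM_1q)z^\perp$ to pull $v$ into $M_1$. Also note $Q=\mathcal N_Q(A)\dpr\subset\mathcal N_{qMq}(A)\dpr$ but equality need not hold; fortunately only the inclusion is needed.
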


\begin{proof}
The proof of Lemma \ref{lemma-amenable-case} is inspired by the one of \cite[Theorem 8.1]{Ho12b}. We will consider successively the cases when $Q$ is of type ${\rm II_1}$, of type ${\rm II_\infty}$ and of type ${\rm III}$. 

\begin{caseII_1}
Assume that $q \in (M_1)^{\psi}$ is any nonzero projection and $Q \subset qMq$ is any type ${\rm II_1}$ subfactor with separable predual that is amenable and globally invariant under the modular automorphism group $\sigma^{\psi_q}$ and such that $Q \cap qM_1q$ is diffuse. Then we have $Q \subset qM_1q$.
\end{caseII_1}

We start by showing the following claim.

\begin{claim}\label{claim1-intertwining1}
For any nonzero projection $z \in \mathcal Z(Q' \cap qMq)$, we have $Qz \preceq_M M_1$.
\end{claim}

\begin{proof}[Proof of the Claim]
By contradiction, assume that there exists a nonzero projection $z \in \mathcal Z(Q' \cap qMq)$ such that $Qz \npreceq_M M_1$.  Since $Q' \cap qMq \subset q M_1 q$ by Proposition \ref{proposition-control} (1) and $Q' \cap qMq \subset q M_1 q$ is globally invariant under the modular automorphism group $\sigma^{\psi_q}$, we have $z \in (M_1)^{\psi}$. Write $Q = \bigvee_{n \in \N} Q_n$ where $(Q_n)_n$ is an increasing sequence of finite dimensional subfactors of $Q$ of the form $Q_n \cong \mathbf M_{2^n}(\C)$. Since the inclusion$$(Q_n' \cap Q)z \subset Qz \quad \cong \quad Q_n' \cap Q \subset Q$$ 
({\it n.b.}~$Q$ is a factor) has finite index, Lemma \ref{lemma-finite-index} implies that $(Q_n' \cap Q) z \npreceq_M M_1$ for all $n \in \N$. 

Then for every $n \in \N$, choose a unitary $u_n \in \mathcal U((Q_n' \cap Q)z)$ such that $\|\rE_{M_1}(u_n)\|_\psi \leq \frac{1}{n + 1}$. Since $Qz$ is finite with expectation, 
we have $(u_n)_n \in \mathcal M^\omega(zMz)$ and hence we may define $u = (u_n)^\omega \in (zMz)^\omega = z M^\omega z \subset M^\omega$. We then have $u \in (Qz)' \cap (Qz)^\omega$ and $\rE_{M_1^\omega}(u)  = 0$ since
$$\|\rE_{M_1^\omega}(u)\|_{\psi^\omega} = \lim_{n \to \omega} \|\rE_{M_1}(u_n )\|_\psi = 0.$$

Observe that $(Qz \cap zM_1z) \oplus z^\perp M_1 z^\perp \subset M_1$ is a diffuse von Neumann subalgebra that is globally invariant under the modular automorphism group $\sigma^{\psi}$. Since $u \in (Qz)' \cap (Qz)^\omega$, we have $u \in ((Qz \cap zM_1z) \oplus z^\perp M_1 z^\perp)' \cap M^\omega$. For all $n \in \N$, since we moreover have $u \, u_n = u_n \, u$ and $u^*u = z$, Theorem \ref{theorem-AOP} implies that 
\begin{align*}
\|\rE_{M_1}(u_n) \, u - u \, \rE_{M_1}(u_n)\|_{\psi^\omega}  
&= \|(\rE_{M_1}(u_n)-u_n)u + u(u_n - \rE_{M_1}(u_n)) \|_{\psi^\omega} \\
&\geq \|u \, (u_n  - \rE_{M_1}(u_n ))\|_{\psi^\omega} \quad (\text{use Theorem \ref{theorem-AOP} here}) \\
&\geq \| z \|_\psi - \|\rE_{M_1}(u_n)\|_\psi .
\end{align*}

Observe that $\lim_{n\to\infty}\rE_{M_1}(u_n) = 0$ $\sigma$-strongly. By taking the limit as $n \to \infty$ in the above inequality, we obtain $z = 0$, a contradiction.
This finishes the proof of the Claim.
\end{proof}

The set $\mathfrak{R}$ of projections $r \in Q' \cap qMq = Q' \cap qM_1q$ (by Proposition \ref{proposition-control} (1)) such that $Qr \subset rM_1r$ attains its maximum in a unique projection $z$ that belongs to $\mathcal Z(Q' \cap qMq) = \mathcal Z(Q' \cap qM_1q)$. (In fact, $\mathfrak{R}$ is invariant under the adjoint action of $\mathcal U(Q' \cap q M_1 q)$, and $z := \bigvee_{r \in \mathfrak{R}} r \in \mathcal Z(Q' \cap qM_1q)$ must satisfy $xz = \rE_{M_1}(x)z = z \rE_{M_1}(x)z$ for all $x \in Q$.) Assume by contradiction that $z \neq q$. Put $z^\perp := q - z \in \mathcal Z(Q' \cap qMq)$. By assumption, we have $z^\perp \neq 0$.

By the previous Claim, we have that $Qz^\perp \preceq_M M_1$. Then there exist $n \geq 1$, a projection $p \in \mathbf M_n(M_1)$, a nonzero partial isometry $v \in (z^\perp M \otimes \mathbf M_{1, n}(\C))p$ and a unital normal $\ast$-homomorphism $\pi : Qz^\perp \to p\mathbf M_n(M_1)p$ such that the inclusion $\pi(Qz^\perp) \subset p\mathbf M_n(M_1)p$ is with expectation (see Theorem \ref{theorem-intertwining} due to the first named author and Isono \cite{HI15} for this important property) and $a v = v \pi(a)$ for all $a \in Qz^\perp$. By Proposition \ref{proposition-control} (1), we obtain that $v \in (z^\perp  M_1 \otimes \mathbf M_{1, n}(\C))p$ and hence $vv^* \in z^\perp(Q' \cap qMq) z^\perp = z^\perp(Q' \cap qM_1q) z^\perp$ and $Qz^\perp \, vv^* \subset vv^* \, z^\perp M_1 z^\perp \, vv^*$. Since $vv^* \leq z^\perp$, $vv^* \neq 0$ and $Q(z+ vv^*) \subset (z + vv^*) M_1 (z + vv^*)$, this contradicts the maximality of $z \in Q' \cap qM_1q$ and finishes the proof in the case when $Q$ is of type~${\rm II_1}$. 

\begin{caseII_infty}
Assume that $q \in (M_1)^{\psi}$ is any nonzero projection and $Q \subset qMq$ is any type ${\rm II_\infty}$ subfactor with separable predual that is amenable and globally invariant under the modular automorphism group $\sigma^{\psi_q}$ and such that $Q \cap qM_1q$ is diffuse. Then we have $Q \subset qM_1q$.
\end{caseII_infty}

Choose a faithful normal semifinite trace $\Tr$ on $Q$ and write $\psi_q = \Tr(T\, \cdot\,)$ for some positive nonsingular operator $T \in \rL^1(Q, \Tr)_+$ (see e.g.\ \cite[Corollary VIII.3.6, Lemma IX.2.12]{Ta03}). Define the abelian von Neumann subalgebra $B = \{T^{{\rm i}t} : t \in \R\}\dpr \subset Q$. Since $\sigma_t^{\psi_q} = \Ad(T^{{\rm i}t})$ for all $t \in \R$, we have $Q^{\psi_q} = B' \cap Q$. Observe that since the inclusion $Q \cap qM_1q \subset Q$ is globally invariant under the modular automorphism group $\sigma^{\psi_q}$, the diffuse von Neumann subalgebra $Q \cap qM_1q \subset Q$ is also semifinite and hence its centralizer $(Q \cap qM_1q)^{\psi_q}$ is diffuse (see e.g.\ \cite[Lemma 11]{Ue98b}). By Proposition \ref{proposition-control} (1) and since $B$ is abelian, we have 
$$B \subset (Q^{\psi_q})' \cap Q^{\psi_q} \subset ((Q \cap qM_1q)^{\psi_q})' \cap Q^{\psi_q} \subset Q^{\psi_q} \cap qM_1q = (Q \cap qM_1q)^{\psi_q}.$$

For every $k \in \N$, we denote by $q_k$ the spectral projection of $T$ for the interval $[\frac{1}{k},+\infty)$. Then all $q_k$  are $\Tr$-finite projections in $B$ such that $q_k \to q$, the unit of $Q$, $\sigma$-strongly as $k \to \infty$. Since $q_k \in (Q \cap qM_1q)^{\psi_q}$, the type ${\rm II_1}$ subfactor $q_k Q q_k \subset q_k M q_k$ is amenable and globally invariant under the modular automorphism group $\sigma^{\psi_{q_k}}$ and $q_k Q q_k \cap q_k M_1 q_k = q_k(Q \cap qM_1q)q_k$ is diffuse. We may then apply the result obtained in the first case to the ${\rm II_1}$ subfactor $q_k Q q_k \subset q_k M q_k$ and we have that $q_k Q q_k \subset q_k M_1 q_k$ for all $k \in \N$. Since $q_k \to q$ $\sigma$-strongly as $k \to \infty$, we obtain $Q \subset qM_1q$. This finishes the proof in the case when $Q$ is of type ${\rm II_\infty}$.

\begin{caseIII}
Assume that $q \in (M_1)^{\psi}$ is any nonzero projection and $Q \subset qMq$ is any type ${\rm III}$ subfactor with separable predual that is amenable and globally invariant under the modular automorphism group $\sigma^{\psi_q}$ and such that $Q \cap qM_1q$ is diffuse. Then we have $Q \subset qM_1q$.
\end{caseIII}

By combining results on the classification theory of amenable factors \cite{Co72, Co75, Ha85} together with \cite{FM75, Kr75}, there exists a hyperfinite ergodic nonsingular equivalence relation $\mathcal R$ defined on a standard probability space $(X, \mu)$ such that  $Q = \rL(\mathcal R)$. Put $A = \rL^\infty(X)$ and denote by $\rE_A : Q \to A$ the unique faithful normal conditional expectation. Denote by $\rE_Q : qMq \to Q$ the unique $\psi_q$-preserving conditional expectation. Choose any faithful state $\tau_A \in A_\ast$ and put $\phi = \tau_A \circ \rE_A \circ \rE_Q \in (qMq)_\ast$. Observe that $A \subset (qMq)^{\phi}$ and $Q$ is globally invariant under the modular automorphism group $\sigma^\phi$.

Let $(\mathcal R_n)_{n \in \N}$ be an increasing sequence of finite subequivalence relations of $\mathcal R$ such that $\mathcal R = \bigvee_{n \in \N} \mathcal R_n$. Put $Q_n = \rL(\mathcal R_n)$ for all $n \in \N$ so that $Q = \bigvee_{n \in \N} Q_n$. Note that $A \subset Q_n$ is still a Cartan subalgebra and $Q_n$ is globally invariant under the modular automorphism group $\sigma^\phi$ for all $n \in \N$ because $\phi |_Q = \tau_A \circ \rE_A$. Observe that since $\mathcal R_n$ is finite, that is, $\mathcal R_n$ has finite orbits almost everywhere, $Q_n$ is a countable direct sum of finite type ${\rm I}$ von Neumann algebras. Therefore using \cite[Corollary 3.19]{Ka82}, up to conjugating by a unitary in $\mathcal U(Q_n)$, the inclusion $A \subset Q_n$ is of the following form:
\begin{equation}\label{equation-inclusion}
(A \subset Q_n) \cong \left( \bigoplus_{k \in \N} \mathcal Z_n^{(k)} \otimes \C^{\oplus k} \subset \bigoplus_{k \in \N} \mathcal Z_n^{(k)} \otimes \mathbf M_k(\C) \right),
\end{equation}
where $\mathcal Z_n^{(k)}$ is a diffuse abelian von Neumann algebra for all $n, k \in \N$.

\begin{claim}
For any nonzero projection $z \in \mathcal Z(Q' \cap qMq)$, we have $Az \preceq_M M_1$.
\end{claim}

\begin{proof}[Proof of the Claim]
By contradiction, assume that there exists a nonzero projection $z \in \mathcal Z(Q' \cap qMq)$ such that $Az \npreceq_M M_1$. Observe that $z \in (qMq)^\phi \cap (M_1)^{\psi}$. Using the structure of the inclusion $A \subset Q_n$ as in \eqref{equation-inclusion}, we see that the inclusion $Q_n' \cap A \subset A$ is of the form:
\begin{equation}\label{equation-inclusion-commutant}
(Q_n' \cap A \subset A) \cong \left( \bigoplus_{k \in \N} \mathcal Z_n^{(k)} \otimes \C1 \subset \bigoplus_{k \in \N} \mathcal Z_n^{(k)} \otimes \C^{\oplus k} \right).
\end{equation}
Using \eqref{equation-inclusion-commutant}, it follows that the inclusion 
$$(Q_n' \cap A)z \subset Az \quad \cong \quad Q_n' \cap A \subset A$$
has essentially finite index and Lemma \ref{lemma-finite-index} implies that $(Q_n' \cap A)z \npreceq_M M_1$ for all $n \in \N$. (Remark that this can easily be confirmed directly, since $Q_n' \cap A \subset A$ are commutative.)

Then for every $n \in \N$, choose a unitary $u_n \in \mathcal U((Q_n' \cap A)z)$ such that $\|\rE_{M_1}(u_n)\|_\psi \leq \frac{1}{n + 1}$. Since $u_n \in (zMz)^{\phi_z}$ for all $n \in \N$, 
we may define $u = (u_n)^\omega \in (zMz)^\omega = z M^\omega z \subset M^\omega$. We then have $u \in (Qz)' \cap (Az)^\omega$ and $\rE_{M_1^\omega}(u)  = 0$. 
Observe that $u \in ((Qz \cap zM_1z) \oplus z^\perp M_1 z^\perp)' \cap M^\omega$. 
For all $n \in \N$, 
Theorem \ref{theorem-AOP} implies, as in Case type $\rm II_1$, that
$$
\|\rE_{M_1}(u_n) \, u - u \, \rE_{M_1}(u_n)\|_{\psi^\omega} 
\geq \| z \|_\psi - \|\rE_{M_1}(u_n)\|_\psi
$$
and hence $z = 0$, a contradiction. This finishes the proof of the Claim.
\end{proof}

The set of projections $r \in Q' \cap qMq = Q' \cap qM_1q$ (by Proposition \ref{proposition-control} (1)) such that $Qr \subset rM_1r$ attains its maximum in a unique projection $z$ that belongs to $\mathcal Z(Q' \cap qMq) = \mathcal Z(Q' \cap qM_1q)$ (see Case type $\rm II_1$). Assume by contradiction that $z \neq q$. Put $z^\perp := q - z \in \mathcal Z(Q' \cap qMq)$. By assumption, we have $z^\perp \neq 0$ and moreover $z^\perp \in Q' \cap qMq \subset A' \cap qMq$.

By the previous Claim, we have that $Az^\perp \preceq_M M_1$. Then there exist $n \geq 1$, a projection $p \in \mathbf M_n(M_1)$, a nonzero partial isometry $v \in (z^\perp M \otimes \mathbf M_{1, n}(\C))p$ and a unital normal $\ast$-homomorphism $\pi : Az^\perp \to p\mathbf M_n(M_1)p$ such that the inclusion $\pi(Az^\perp) \subset p\mathbf M_n(M_1)p$ is with expectation (see Theorem \ref{theorem-intertwining}) and $a v = v \pi(a)$ for all $a \in Az^\perp$. Since $z^\perp \in Q' \cap qMq \subset A' \cap qMq$, we may define the unital normal $\ast$-homomorphism $\iota : A \to Az^\perp : a \mapsto az^\perp$. Then $\pi \circ \iota : A \to p\mathbf M_n(M_1)p$ is unital normal $\ast$-homomorphism such that the inclusion $(\pi \circ \iota)(A) \subset p\mathbf M_n(M_1)p$ is with expectation and $a \, v = \iota(a) \, v = v \, \pi(\iota(a)) = v \, (\pi\circ \iota)(a)$ for all $a \in A$. 

Put $N = \mathcal N_{qMq}(A)\dpr$ and observe that $Q \subset N$. Since $v^*v \in (\pi \circ \iota)(A)' \cap p\mathbf M_n(M)p$ and since $(\pi \circ \iota)(A) \subset p\mathbf M_n(M_1)p$ is diffuse and with expectation, we have $v^*v \in (\pi \circ \iota)(A)' \cap p\mathbf M_n(M_1)p$ by Proposition \ref{proposition-control} (2) and hence we may assume that $p = v^* v$. Since the inclusion $A \subset N$ is regular, we moreover have $v^* \,  N  \, v \subset  p\mathbf M_n(M_1)p$ by Proposition \ref{proposition-control} (2). 

We have $vv^* \in (Az^\perp)' \cap z^\perp M z^\perp = z^\perp(A' \cap qMq)z^\perp \subset z^\perp N z^\perp$. Since the inclusion $Q \subset N$ is with expectation (because so is $Q \subset qMq$)
and since $Q$ is of type ${\rm III}$, it follows that $N$ is also of type ${\rm III}$ (see \cite[Lemma V.2.29]{Ta02}) and so is $z^\perp N z^\perp$. If we denote by $r \in \mathcal Z(z^\perp N z^\perp)$ the central support in $z^\perp N z^\perp$ of the projection $vv^* \in z^\perp N z^\perp$, we have $vv^* \sim r$ in $z^\perp N z^\perp$. There exists a  partial isometry $u \in z^\perp N z^\perp$ such that $u^*u = vv^*$ and $uu^* = r$. We have $(uv)^* Nr \, (uv) \subset  p\mathbf M_n(M_1)p$. So, up to replacing $v$ by $uv$, we may assume that $v^* \, z^\perp N z^\perp \, v \subset  p\mathbf M_n(M_1)p$, $vv^* \in \mathcal Z(z^\perp N z^\perp)$ and $p = v^*v$ .

This implies that $ z^\perp N z^\perp \, v \subset  v \, p\mathbf M_n(M_1)p$ and hence $ Q z^\perp \, v \subset  v \, p\mathbf M_n(M_1)p$. This further implies that $(Q \cap qM_1q)z^\perp \, v \subset  v \, p\mathbf M_n(M_1)p$. Since $p = v^* v$, $vv^* \in \mathcal{Z}(z^\perp N z^\perp)$ and $Q \subset N$,
the mapping $\rho : (Q \cap qM_1q)z^\perp \to p\mathbf M_n(M_1)p : x \mapsto v^* x v$ defines a unital normal $\ast$-homomorphism such that $xv = v\rho(x)$ for all $x \in (Q \cap qM_1q)z^\perp$.  Observe that $z \in M^\psi$ and hence $(Q \cap qM_1q)z^\perp \subset z^\perp M z^\perp$ is with expectation. By Proposition \ref{proposition-control} (1), we obtain that $v \in (z^\perp  M_1 \otimes \mathbf M_{1, n}(\C))p$ and hence $vv^* \in z^\perp(Q' \cap qMq) z^\perp = z^\perp(Q' \cap qM_1q) z^\perp$ and $Qz^\perp \, vv^* \subset vv^* \, z^\perp M_1 z^\perp \, vv^*$. Since $vv^* \leq z^\perp$, $vv^* \neq 0$ and $Q(z+ vv^*) \subset (z + vv^*) M_1 (z + vv^*)$, this contradicts the maximality of $z \in Q' \cap qM_1q$ and finishes the proof in the case when $Q$ is of type ${\rm III}$.

Since we have successively treated the cases when $Q$ is of type ${\rm II_1}$, of type ${\rm II_\infty}$ and of type ${\rm III}$, this finishes the proof of Lemma \ref{lemma-amenable-case}.
\end{proof}

\begin{lem}\label{lemma-nonamenable-case}
Let $q \in (M_1)^{\psi}$ be any nonzero projection and $Q \subset qMq$ any subfactor with separable predual that is not amenable and globally invariant under the modular automorphism group $\sigma^{\psi_q}$ and such that $Q \cap qM_1q$ and $Q' \cap (qMq)^\omega$ are diffuse. Then $Q \subset qM_1q$.
\end{lem}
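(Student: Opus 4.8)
The plan is to pass to continuous cores and deploy Popa's deformation/rigidity theory there, as outlined in the introduction. Since $\psi=\psi\circ\rE_{M_1}$, one first checks that $\psi=\psi_1\ast\varphi_2$ with $\psi_1:=\psi|_{M_1}$ (every reduced word containing a letter from $M_2^\circ=\ker\varphi_2$ is killed by $\rE_{M_1}$, and $\psi|_{M_2}=\varphi_2$), hence $\sigma^\psi_t=\sigma^{\psi_1}_t\ast\sigma^{\varphi_2}_t$ and the continuous core is a \emph{semifinite} amalgamated free product
$$\mathcal M:=\core_\psi(M)=\mathcal M_1\ast_{\mathcal B}\mathcal M_2,\qquad \mathcal M_1:=\core_{\psi_1}(M_1),\ \ \mathcal M_2:=\core_{\varphi_2}(M_2),\ \ \mathcal B:=\rL_\psi(\R),$$
over the abelian, hence amenable, subalgebra $\mathcal B$, with tracial weight $\Tr:=\Tr_\psi$. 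Identifying $q$ with $\pi_\psi(q)\in(M_1)^\psi\subset\mathcal M$, the global $\sigma^{\psi_q}$-invariance of $Q$ and of $Q\cap qM_1q$ yields inclusions $\core_{\psi_q}(Q)\subset q\mathcal Mq$ and $\core_{\psi_q}(Q\cap qM_1q)\subset q\mathcal M_1q$ sharing the common copy of $\R$. Moreover $Q\subset qM_1q$ is \emph{equivalent} to $\core_{\psi_q}(Q)\subset q\mathcal M_1q$: one implication is obvious, and the converse follows by taking fixed-point algebras under the dual trace-scaling action, which restricts to the dual actions of $\core_{\psi_q}(Q)$ and of $\mathcal M_1$ with fixed-point algebras $\pi_\psi(Q)$ and $\pi_\psi(M_1)$. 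So the goal becomes this inclusion of cores.

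The decisive step is the following intertwining statement inside $\mathcal M$: for a finite-trace projection $p\in\core_{\psi_q}(Q)$ of full central support and $\mathcal Q:=p\,\core_{\psi_q}(Q)\,p$, one has $\mathcal Qz\preceq_{\mathcal M}\mathcal M_1$ for every nonzero projection $z\in\mathcal Q'\cap p\mathcal Mp$. I would derive this from Corollary~\ref{deformation/rigidity-AFP} applied to $\mathcal M=\mathcal M_1\ast_{\mathcal B}\mathcal M_2$ and $\mathcal Q$; its remaining hypotheses are (ii) and (iii). Condition (ii): $\mathcal Q$ has no amenable direct summand, because $Q$ is a non-amenable factor, so $\core_{\psi_q}(Q)$ --- being $Q$ crossed by the amenable group $\R$, with centre carrying the ergodic flow of weights of $Q$ --- has no amenable direct summand, and $p$ has full central support. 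Condition (iii): $\mathcal Q'\cap(p\mathcal Mp)^\omega\npreceq_{\mathcal M^\omega}\mathcal B^\omega$. Here the hypothesis that $Q'\cap(qMq)^\omega$ is diffuse comes in: this algebra is globally $\sigma^{\psi_q^\omega}$-invariant, and using the Masuda-Tomatsu inclusion $\core_{\psi^\omega}(M^\omega)\subset\mathcal M^\omega$ with trace-preserving expectation \cite{MT13} (after, if necessary, replacing $\psi$ by a nearby state as in Lemma~\ref{lemma-centralizer} so as to make the relevant ultraproduct centralizer diffuse, while keeping $Q$, $Q\cap qM_1q$ and their relative commutants modular-invariant), one exhibits inside $\core_{\psi_q}(Q)'\cap q\mathcal M^\omega q$ a diffuse subalgebra of the type handled by Lemma~\ref{lemma-intertwining-core-bis}, which therefore does not embed with expectation into $\mathcal B^\omega=\rL_\psi(\R)^\omega$; cutting by the full-support projection $p$ gives (iii). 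Corollary~\ref{deformation/rigidity-AFP} then produces $i\in\{1,2\}$ with $\mathcal Qz\preceq_{\mathcal M}\mathcal M_i$. The alternative $i=2$ is ruled out: $Q\cap qM_1q$ is diffuse and, after a further state perturbation, has diffuse centralizer $R\subset q(M_1)^\psi q$ with $R\npreceq_M M_2$, so Lemma~\ref{lemma-intertwining-core} gives $\pi_\psi(R)s\npreceq_{\mathcal M}\mathcal M_2$ for a finite-trace $s\in\mathcal B$; since $\pi_\psi(R)s$ sits inside $\core_{\psi_q}(Q)$ with expectation and $\mathcal M_2\subset\mathcal M$ is with expectation, $\mathcal Qz\preceq_{\mathcal M}\mathcal M_2$ would force $\pi_\psi(R)s\preceq_{\mathcal M}\mathcal M_2$, a contradiction. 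Hence $\mathcal Qz\preceq_{\mathcal M}\mathcal M_1$.

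To conclude I run the maximality argument from the proof of Lemma~\ref{lemma-amenable-case}, now inside the semifinite amalgamated free product $\mathcal M$. Let $z_0$ be the largest projection in $\core_{\psi_q}(Q)'\cap q\mathcal Mq$ --- which, by the amalgamated-free-product analogue of Proposition~\ref{proposition-control} (available since Lemma~\ref{lemma-control} is already stated over an arbitrary amalgam), equals $\core_{\psi_q}(Q)'\cap q\mathcal M_1q$ --- with $\core_{\psi_q}(Q)z_0\subset z_0\mathcal M_1z_0$. If $z_0\neq q$, apply the intertwining step to a full-support finite-trace projection $p\leq q-z_0$ in $\core_{\psi_q}(Q)(q-z_0)$ with $z:=p$: the case $i=2$ is impossible, while in the case $i=1$ the amalgamated Proposition~\ref{proposition-control} produces a nonzero partial isometry $v$ with $vv^*\leq q-z_0$, $vv^*\in\core_{\psi_q}(Q)'\cap q\mathcal M_1q$ and $\core_{\psi_q}(Q)vv^*\subset vv^*\mathcal M_1vv^*$, contradicting the maximality of $z_0$. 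Therefore $z_0=q$, i.e.\ $\core_{\psi_q}(Q)\subset q\mathcal M_1q$, whence $Q\subset qM_1q$.

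The step I expect to be the main obstacle is the verification of condition (iii), namely deducing $\core_{\psi_q}(Q)'\cap(q\mathcal Mq)^\omega\npreceq_{\mathcal M^\omega}\rL_\psi(\R)^\omega$ from diffuseness of $Q'\cap(qMq)^\omega$. The subtlety is that the continuous core $\core_{\psi_q}(Q)$ and the relative commutant $Q'\cap(qMq)^\omega$ interact only through their common copy of $\rL(\R)$ rather than commuting, so $\core_{\psi_q^\omega}(Q'\cap(qMq)^\omega)$ is not directly available inside $\core_{\psi_q}(Q)'$, and one must filter everything through the centralizer of the ultraproduct state and the Masuda-Tomatsu identification of $\core_{\psi^\omega}(M^\omega)$ before Lemma~\ref{lemma-intertwining-core-bis} becomes applicable; checking that the requisite state perturbations keep $Q$, $Q\cap qM_1q$ and their relative commutants globally modular-invariant is a related nuisance. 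Excluding the $i=2$ alternative, through Lemma~\ref{lemma-intertwining-core} and the passage of $\preceq$ along inclusions with expectation, is the secondary delicate point.
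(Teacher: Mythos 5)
Your overall strategy is the same as the paper's (pass to the continuous core viewed as a semifinite amalgamated free product over a copy of $\rL(\R)$, apply Corollary \ref{deformation/rigidity-AFP}, exclude $i=2$, run a maximality argument, descend via the dual action), but there is a genuine error at the very first step. The identity $\psi=\psi_1\ast\varphi_2$ is false in general: freeness of $M_1$ and $M_2$ with respect to $\psi$ would require $\psi$ to vanish on words alternating in $\ker\psi_1$ and $\ker\varphi_2$, whereas $\psi=\psi\circ\rE_{M_1}$ only kills words that are reduced with respect to $\varphi$, i.e.\ alternating in $\ker\varphi_1$ and $\ker\varphi_2$. Concretely, for $y\in\ker\psi_1$ and $x_1,x_2\in\ker\varphi_2$ one computes $\psi(x_1yx_2)=\varphi_1(y)\,\varphi_2(x_1x_2)$, which is nonzero as soon as $\psi_1\neq\varphi_1$. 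Consequently $\core_\psi(M)$ is \emph{not} an amalgamated free product over $\rL_\psi(\R)$: the free product decomposition of the core (\cite[Theorem 5.1]{Ue98a}) lives over $\rL_\varphi(\R)$ inside $\core_\varphi(M)$, and the canonical isomorphism $\Pi_{\varphi,\psi}$ does not carry $\rL_\psi(\R)$ onto $\rL_\varphi(\R)$ --- the paper warns about exactly this point in Section \ref{preliminaries}. This is why the paper's proof works inside $\mathcal M=\core_\varphi(M)=\core_\varphi(M_1)\ast_{\rL_\varphi(\R)}\core_\varphi(M_2)$ and transports every $\psi$-constructed object (the finite trace projection $r\in\rL_\psi(\R)$, the algebra $r\core_{\psi_q}(Q)r$, the unitaries coming from the relative commutant) through $\Pi_{\varphi,\psi}$. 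Your application of Corollary \ref{deformation/rigidity-AFP} with $\mathcal B=\rL_\psi(\R)$ therefore does not go through as written; it can be repaired, but only by adopting this bookkeeping.

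The second gap is the one you flag yourself: condition (iii). You name the right ingredients, but the missing step is not routine and Lemma \ref{lemma-intertwining-core-bis} is not the statement that does the job (it concerns the core of a diffuse subalgebra of $M$, not a subalgebra of the ultraproduct relative commutant). In the paper the Claim occupies a full page: one first reduces, by a diagonal argument in the style of \cite[Lemma 9.5]{Io12}, to testing the intertwining condition against finite subsets of $\mathcal M$ rather than of $\mathcal M^\omega$; one then invokes \cite[Theorem 2.3]{HR14} to produce unitaries of $Q'\cap((qMq)^\omega)^{\psi_q^\omega}$ tending weakly to $0$, checks that their images $(p\pi_\varphi(u_n)p)^\omega$ are unitaries of $\mathcal P$, and applies Lemma \ref{lemma-intertwining-core} (with trivial target algebra) to make the expectations onto $\rL_\varphi(\R)$ vanish. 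Finally, both your exclusion of $i=2$ and the ``amalgamated analogue of Proposition \ref{proposition-control}'' invoked in your maximality step are supplied in the paper by \cite[Theorem 2.5]{BHR12} together with the fact that $\Pi_{\varphi,\psi}(r\core_{\psi_q}(Q\cap qM_1q)r)\npreceq_{\mathcal M}\rL_\varphi(\R)$ (Lemma \ref{lemma-intertwining-core-bis}); over a nontrivial amalgam, diffuseness of a subalgebra is not by itself enough --- what is needed is non-embeddability into the amalgam, and your proposal does not establish that bridge.
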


\begin{proof}
The proof, inspired by the one of \cite[Theorem E]{Ho12b}, relies on Connes-Takesaki's structure theory \cite{Co72, Ta03} and uses Corollary \ref{deformation/rigidity-AFP}. 

The novel aspect of the proof consists in combining \cite[Theorem 4.1]{AH12} and \cite[Theorem 2.10]{MT13} in order to obtain the following canonical inclusions of semifinite von Neumann algebras with trace preserving conditional expectations:
$$\core_{\phi}(M) \subset \core_{\phi^\omega}(M^\omega) \subset (\core_{\phi}(M))^\omega$$
with $\phi = \varphi$ or $\phi = \psi$. 
More precisely, if we denote by $\rE_\omega^\phi : (\core_{\phi}(M))^\omega \to \core_{\phi}(M)$ the canonical faithful normal conditional expectation and by $\Tr_{\phi}$ (resp.\ $\Tr_{\phi^\omega}$) the canonical faithful normal semifinite trace on $\core_{\phi}(M)$ (resp.\ $\core_{\phi^\omega}(M^\omega)$), we have that $\Tr_{\phi} \circ \rE_\omega^\phi$ is a faithful normal semifinite trace on $(\core_{\phi}(M))^\omega$ and $(\Tr_{\phi} \circ \rE_\omega^\phi) |_{\core_{\phi^\omega}(M^\omega)} = \Tr_{\phi^\omega}$. We will simply use the notation $\|\cdot\|_2$ for the $\rL^2$-norm associated with any of the faithful normal semifinite traces considered above. We will use throughout the proof the identification $\rL_{\phi}(\R) = \rL_{\phi^\omega}(\R) \subset \core_{\phi^\omega}(M^\omega)$.

Since $q \in M^\psi$ and $Q \subset qMq$ is globally invariant under the modular automorphism group $\sigma^{\psi_q}$, we may define $\core_{\psi_q}(Q) = Q \rtimes_{\sigma^{\psi_q}} \R$ and regard $\core_{\psi_q}(Q) \subset \pi_\psi(q) \core_\psi(M) \pi_\psi(q)$ naturally. Fix an arbitrary nonzero finite trace projection $r \in \rL_{\psi}(\R)$ and put $\mathcal M = \core_{\varphi}(M)$, $p = \Pi_{\varphi, \psi}(r) \in \mathcal M$, $\mathcal Q = \Pi_{\varphi, \psi}(r\core_{\psi_q}(Q)r)$ and $\mathcal{P} = \mathcal Q' \cap (p\pi_\varphi(q)\mathcal M \pi_\varphi(q)p)^\omega$. Observe that 
$$p\pi_\varphi(q) = \Pi_{\varphi, \psi}(r \pi_\psi(q)) = \Pi_{\varphi, \psi}(\pi_\psi(q) r) = \pi_\varphi(q)p$$ 
defines a nonzero projection in $\mathcal M$ and is the unit of $\mathcal Q$.

\begin{claim}
We have $\mathcal{P} \npreceq_{\mathcal M^\omega} (\rL_{\varphi}(\R))^\omega$.
\end{claim}

\begin{proof}[Proof of the Claim]
The proof uses an idea of \cite[Lemma 9.5]{Io12}. By contradiction, assume that $\mathcal{P} \preceq_{\mathcal M^\omega} (\rL_{\varphi}(\R))^\omega$. By \cite[Lemma 2.3]{BHR12}, there exist $\delta > 0$ and a finite subset $\mathcal F \subset p\pi_\varphi(q) \mathcal M^\omega$ such that 
\begin{equation}\label{equation-intertwining1}
\sum_{a, b \in \mathcal F} \|\rE_{(\rL_{\varphi}(\R))^\omega}(b^* u a)\|_2^2 > \delta, \forall u \in \mathcal U(\mathcal{P}).
\end{equation}
For each $a \in \mathcal F$, write $a = (a_n)^\omega$ with a fixed sequence $(a_n)_n \in p\pi_\varphi(q)\mathcal M^\omega(\mathcal M)$.

We next show that there exists $n \in \N$ such that 
\begin{equation}\label{equation-intertwining2}
\sum_{a, b \in \mathcal F} \|\rE_{(\rL_{\varphi}(\R))^\omega}(b_n^* u a_n)\|_2^2 \geq \delta, \forall u \in \mathcal U(\mathcal{P}).
\end{equation}
Assume by contradiction that this is not the case. Then for each $n \in \N$, there exists $u_n \in \mathcal U(\mathcal{P})$ such that 
$$\sum_{a, b \in \mathcal F} \|\rE_{(\rL_{\varphi}(\R))^\omega}(b_n^* u_n a_n)\|_2^2 < \delta.$$ 
Since $p\pi_\varphi(q)\mathcal M \pi_\varphi(q)p$ is a finite von Neumann algebra, we may write  $u_n = (u^{(n)}_m)^\omega$ for some sequence $(u^{(n)}_m)_m \in \ell^\infty(\N, p\pi_\varphi(q)\mathcal M \pi_\varphi(q)p)$ such that $u_m^{(n)} \in \mathcal U(p\pi_\varphi(q) \mathcal M \pi_\varphi(q)p)$ for all $m \in \N$. Then we have  
$$\lim_{m \to \omega} \sum_{a, b \in \mathcal F} \|\rE_{\rL_{\varphi}(\R)}(b_n^* u^{(n)}_m a_n)\|_2^2 < \delta.$$
Fix a $\|\cdot\|_2$-dense countable subset $\{y_n : n \in \N\} \subset \mathcal Q$. Since $\lim_{m \to \omega} \|y_j u_m^{(n)} - u_m^{(n)} y_j\|_2 = \|y_j u_n - u_n y_j\|_2 = 0$ for all $n \in \N$ and all $0 \leq j \leq n$, we may choose $m_n \in \N$ large enough so that $v_n := u_{m_n}^{(n)} \in \mathcal U(p\pi_\varphi(q)\mathcal M \pi_\varphi(q)p)$ satisfies $\|y_j v_n - v_n y_j\|_2 \leq \frac{1}{n + 1}$ for all $0 \leq j \leq n$ and $\sum_{a, b \in \mathcal F} \|\rE_{\rL_{\varphi_q}(\R)}(b_n^* v_n a_n)\|_2^2 \leq \delta$. Since $p\pi_\varphi(q)\mathcal M \pi_\varphi(q)p$ is finite, we may define $v := (v_n)^\omega \in (p\pi_\varphi(q) \mathcal M \pi_\varphi(q) p)^\omega$. We moreover have $v \in \mathcal U(\mathcal{P})$ and 
\begin{equation}\label{equation-intertwining3}
\sum_{a, b \in \mathcal F} \|\rE_{(\rL_{\varphi}(\R))^\omega}(b^* v a)\|_2^2 = \lim_{n \to \omega} \sum_{a, b \in \mathcal F} \|\rE_{\rL_{\varphi}(\R)}(b_n^* v_n a_n)\|_2^2 \leq \delta.
\end{equation}
Equations \eqref{equation-intertwining1} and \eqref{equation-intertwining3} give a contradiction. This shows that Equation \eqref{equation-intertwining2} holds. Therefore, up to replacing the finite subset $\mathcal F \subset p\pi_\varphi(q) \mathcal M^\omega$ by $\{a_n : a \in \mathcal F\} \subset p\pi_\varphi(q) \mathcal M$, we may assume that $\mathcal F \subset p\pi_\varphi(q) \mathcal M$ in Equation \eqref{equation-intertwining1}.

Since $Q' \cap (qMq)^\omega$ is diffuse and $Q$ is globally invariant under the modular automorphism group $\sigma^{\psi_q}$, we know that $Q' \cap ((qMq)^\omega)^{\psi_q^\omega}$ is diffuse by \cite[Theorem 2.3]{HR14}. We may then choose a sequence $(u_n)_n \in \mathcal M^\omega(qMq)$ such that $(u_n)^\omega \in \mathcal U(Q' \cap ((qMq)^\omega)^{\psi_q^\omega})$ and $\lim_{n\to\infty} u_n = 0$ $\sigma$-weakly (see the first and second paragraphs in the proof of \cite[Theorem A]{HR14}). Observe that $(p \pi_\varphi(u_n) p)_n \in \ell^\infty(\N, p\pi_\varphi(q)\mathcal M \pi_\varphi(q)p)$ and 
\begin{align*}
\pi_{\varphi^\omega}((u_n)^\omega) p &= \Pi_{\varphi^\omega, \psi^\omega}(\pi_{\psi^\omega}((u_n)^\omega) r) \\
&= \Pi_{\varphi^\omega, \psi^\omega}(r\pi_{\psi^\omega}((u_n)^\omega) r) \\
&= (\Pi_{\varphi, \psi}(r \pi_\psi(u_n) r))^\omega \\
&= (p \pi_\varphi(u_n) p)^\omega \in \mathcal U(\mathcal P).
\end{align*}
Since $\mathcal F \subset p\pi_\varphi(q) \mathcal M$, using Lemma \ref{lemma-intertwining-core} (with letting the $Q$ there be the trivial algebra), we obtain $\lim_{n \to \omega} \|\rE_{\rL_{\varphi}(\R)}(b^* \, p\pi_\varphi(u_n)p \,  a)\|_2 = 0$ for all $a, b \in \mathcal F$. This implies that 
\begin{equation}\label{equation-intertwining4}
\sum_{a, b \in \mathcal F} \|\rE_{(\rL_{\varphi}(\R))^\omega}(b^*\, \pi_{\varphi^\omega}((u_n)^\omega) p \, a)\|_2^2 = \lim_{n \to \omega} \sum_{a, b \in \mathcal F} \|\rE_{\rL_{\varphi}(\R)}(b^*\, p\pi_\varphi(u_n) p \, a)\|_2^2 = 0.
\end{equation}
Equation \eqref{equation-intertwining1} with $\mathcal F \subset p\pi_\varphi(q) \mathcal M$ and Equation \eqref{equation-intertwining4} give a contradiction. This finishes the proof of the Claim.
\end{proof}

Next, for each $i \in \{1, 2\}$, put $\mathcal M_i = \core_{\varphi}(M_i)$. We have $\mathcal M = \mathcal M_1 \ast_{\rL_\varphi(\R)} \mathcal M_2$ (see \cite[Theorem 5.1]{Ue98a}). Observe that since $M_1$ is globally invariant under $\sigma^\psi$, we have $\Pi_{\varphi, \psi}(\core_\psi(M_1)) = \core_\varphi(M_1) = \mathcal M_1$. Since $r \in \rL_\psi(\R) \subset \core_\psi(M_1)$, we have $p = \Pi_{\varphi, \psi} (r) \in \mathcal M_1$. Since $Q \cap qM_1q$ is diffuse and globally invariant under $\sigma^{\psi_q}$, we have $\Pi_{\varphi, \psi}(r \core_{\psi_q}(Q \cap qM_1q) r) \npreceq_{\mathcal M} \rL_{\varphi}(\R)$ by Lemma \ref{lemma-intertwining-core-bis}. Then \cite[Theorem 2.5]{BHR12} implies that $(\Pi_{\varphi, \psi}(r \core_{\psi_q}(Q \cap qM_1q) r))' \cap p\pi_\varphi(q)\mathcal M\pi_\varphi(q)p \subset p\pi_\varphi(q)\mathcal M_1\pi_\varphi(q)p$ and hence 
$$\mathcal Q' \cap p\pi_\varphi(q)\mathcal M\pi_\varphi(q)p = \mathcal Q' \cap p\pi_\varphi(q)\mathcal M_1\pi_\varphi(q)p.$$ 
The set of projections $s \in \mathcal Q' \cap p\pi_\varphi(q) \mathcal M \pi_\varphi(q)p = \mathcal Q' \cap p\pi_\varphi(q) \mathcal M_1 \pi_\varphi(q)p$ such that $\mathcal Qs \subset s\mathcal M_1s$ attains its maximum in a unique projection $z$ that belongs to $\mathcal Z(\mathcal Q' \cap p\pi_\varphi(q) \mathcal M \pi_\varphi(q)p) = \mathcal Z(\mathcal Q' \cap p\pi_\varphi(q) \mathcal M_1\pi_\varphi(q)p)$. Assume by contradiction that $z \neq p\pi_\varphi(q)$. Put $z^\perp := p\pi_\varphi(q) - z \in \mathcal Z(\mathcal Q' \cap p\pi_\varphi(q)\mathcal M\pi_\varphi(q)p)$. By assumption, we have $z^\perp \neq 0$.

Observe that since $Q \subset q M q$ is a subfactor that is not amenable, $\mathcal Q = \Pi_{\varphi, \psi}( r \core_{\psi_q}(Q) r)$ has no amenable direct summand by \cite[Proposition 2.8]{BHR12}. By the previous Claim, we moreover have $\mathcal{P} \npreceq_{\mathcal M^\omega} (\rL_\varphi(\R))^\omega$. Then Corollary \ref{deformation/rigidity-AFP} implies that there exists $i \in \{1, 2\}$ such that $\mathcal Q z^\perp \preceq_{\mathcal M} \mathcal M_i$.  Hence, there exist $n \geq 1$, a finite trace projection $f \in \mathbf M_n(\mathcal M_i)$ (with respect to the canonical trace $\Tr_{\varphi_i} \otimes \tr_n$), a nonzero partial isometry $v \in (z^\perp \mathcal M \otimes \mathbf M_{1, n}(\C))f$ and a unital normal $\ast$-homomorphism $\pi : \mathcal Qz^\perp \to f \mathbf M_n(\mathcal M_i) f$ such that $xv = v \pi(x)$ for all $x \in \mathcal Q z^\perp$. In particular, we have $\Pi_{\varphi, \psi}(r \core_{\psi_q}(Q \cap qM_1q) r) \, v \subset  v \,  f \mathbf M_n(\mathcal M_i) f$. Since $\Pi_{\varphi, \psi}(r \core_{\psi_q}(Q \cap qM_1q) r) \npreceq_{\mathcal M} \rL_{\varphi}(\R)$, \cite[Theorem 2.5]{BHR12} and its Claim imply that $i = 1$ and $v \in (z^\perp \mathcal M_1 \otimes \mathbf M_{1, n}(\C))f$. Therefore we have $vv^* \in \mathcal Q' \cap p\pi_\varphi(q)\mathcal M_1 \pi_\varphi(q)p$, $vv^* \neq 0$, $vv^* \leq z^\perp$ and $\mathcal Q (z + vv^*) \subset (z +vv^*) \mathcal M_1 (z + vv^*)$. This contradicts the maximality of the projection $z \in \mathcal Q' \cap p\pi_\varphi(q)\mathcal M_1\pi_\varphi(q)p$.

Thus, we have $z = p\pi_\varphi(q)$ and hence 
$$\Pi_{\varphi, \psi}(r\core_{\psi_q}(Q)r) = \mathcal Q \subset p\pi_\varphi(q)\mathcal M_1\pi_\varphi(q)p = \Pi_{\varphi, \psi}(r \core_{\psi_q}(qM_1q) r).$$ 
This implies that $r\core_{\psi_q}(Q)r \subset r \core_{\psi_q}(qM_1q) r$. Since this holds for every nonzero finite trace projection $r \in \rL_\psi(\R)$, we obtain $\core_{\psi_q}(Q) \subset \core_{\psi_q}(qM_1q)$.  Observe that $\pi_{\psi_q}(qMq) \subset \core_{\psi_q}(qMq)$ is the fixed point algebra by an action of $\R$, called the dual action of $\sigma^{\psi_q}$, (see \cite[Theorem X.2.3 (i)]{Ta03}) and hence there exists a (non-normal) conditional expectation ${\mathrm F} :  \core_{\psi_q}(qMq) \to \pi_{\psi_q}(qMq)$ such that ${\mathrm F}(\core_{\psi_q}(q M_1 q)) = \pi_{\psi_q}(q M_1 q)$. By applying the conditional expectation $\mathrm F$ to $\pi_{\psi_q}(Q) \subset \core_{\psi_q}(Q) \subset \core_{\psi_q}(qM_1q)$, we obtain $\pi_{\psi_q}(Q) \subset \pi_{\psi_q}(qM_1q)$ and hence $Q \subset qM_1q$. This finishes the proof of Lemma \ref{lemma-nonamenable-case}.
\end{proof}

\begin{proof}[Proof of Theorem \ref{theorem-general}]
Since both $Q$ and $Q \cap M_1$ are with expectation in $M$, we may choose a faithful state $\psi \in M_\ast$ such that both $Q$ and $Q \cap M_1$ are globally invariant under the modular automorphism group $\sigma^\psi$. Denote by $\R \to \mathcal U(M) : t \mapsto u_t = [D\psi : D\varphi]_t$ the Connes Radon-Nikodym cocycle (see \cite[Th\'eor\`eme 1.2.1]{Co72}) satisfying $\sigma_t^\psi = \Ad(u_t) \circ \sigma_t^\varphi$ for all $t \in \R$.

Fix any $t \in \R$. Define the unital normal $\ast$-isomorphism $\pi_t : Q \cap M_1 \to M : x \mapsto u_t{}^* x u_t$. Observe that 
$$\pi_t(Q \cap M_1) = u_t{}^* \, Q \cap M_1 \, u_t = u_t{}^* \, \sigma_t^\psi(Q \cap M_1) \, u_t = \sigma_t^\varphi(Q \cap M_1) \subset \sigma_t^\varphi(M_1) = M_1$$
and $x \, u_t = u_t \, \pi_t(x)$ for all $x \in Q \cap M_1$. Since $Q \cap M_1 \subset M_1$ is diffuse and with expectation, Proposition \ref{proposition-control} (1) implies that $u_t \in \mathcal U(M_1)$. Since this holds for every $t \in \R$, we obtain 
$$\sigma_t^\psi(M_1) = u_t \, \sigma_t^\varphi(M_1) \, u_t{}^* = u_t \, M_1 \, u_t{}^* = M_1.$$
This implies that $\psi = \psi \circ \rE_{M_1}$ where $\rE_{M_1} : M \to M_1$ is the unique faithful normal conditional expectation.

Since $Q \cap M_1 \subset M_1$ is diffuse and with expectation, we have $Q' \cap M \subset (Q \cap M_1)' \cap M = (Q \cap M_1)' \cap M_1$ by Proposition \ref{proposition-control} (1) and hence $Q' \cap M = Q' \cap M_1$. Denote by $z \in \mathcal Z(Q' \cap M^\omega)$ the unique central projection such that $(Q' \cap M^\omega)z$ is diffuse and $(Q' \cap M^\omega)z^\perp$ is atomic. By \cite[Theorem 2.3]{HR14}, we have $z \in \mathcal Z(Q' \cap M) = \mathcal Z(Q' \cap M_1)$ and $(Q' \cap M^\omega)z^\perp = (Q' \cap M)z^\perp = (Q' \cap M_1)z^\perp$. Observe that $z \in (M_1)^\psi$.

Denote by $(z_n)_n$ a sequence of central projections in $\mathcal Z(Qz)$ such that $\sum_n z_n = z$, $Q z_0$ has a diffuse center and $Qz_n$ is a diffuse factor for all $n \geq 1$. We have $\mathcal Z(Qz) \subset (Qz)' \cap zM^\psi z = z(Q' \cap M^\psi)z = z(Q' \cap (M_1)^{\psi})z$. Moreover, since $\mathcal Z(Qz) z_0 \subset z_0M_1z_0$ is diffuse and globally invariant under the modular automorphism group $\sigma^{\psi_{z_0}}$, we have $Qz_0 \subset (\mathcal Z(Qz)z_0)' \cap z_0Mz_0 = (\mathcal Z(Qz)z_0)' \cap z_0M_1z_0$ by Proposition \ref{proposition-control} (1). Finally, for all $n \geq 1$, since $Qz_n \subset z_n M z_n$ is a non type ${\rm I}$ subfactor that is globally invariant under the modular automorphism group $\sigma^{\psi_{z_n}}$ and such that $Qz_n \cap z_n M_1 z_n = (Q \cap M_1)z_n$ and $(Qz_n)' \cap (z_n M z_n)^\omega = (Q' \cap M^\omega)z_n$ are diffuse, Lemma \ref{lemma-amenable-case}, in the case when $Qz_n$ is amenable, and Lemma \ref{lemma-nonamenable-case}, in the case when $Qz_n$ is nonamenable, imply that $Qz_n \subset z_n M_1 z_n$. Therefore, we have $Qz \subset zM_1 z$. This finishes the proof of Theorem \ref{theorem-general}.
\end{proof}

We can finally deduce the main results of this paper.

\begin{proof}[Proof of Theorem \ref{thmA}]
By applying Theorem \ref{theorem-general} to the case when the projection $z \in \mathcal Z(Q' \cap M^\omega)$ satisfies $z = 1$, we obtain $Q \subset M_1$.
\end{proof}

\begin{proof}[Proof of Corollary \ref{corB}]
Since both $Q$ and $Q \cap M_1$ are with expectation and $Q \cap M_1$ is diffuse, using Lemma \ref{lemma-centralizer}, we may choose a faithful state $\psi \in M_\ast$ such that both $Q$ and $Q \cap M_1$ are globally invariant under the modular automorphism group $\sigma^\psi$ and the centralizer $(Q \cap M_1)^\psi$ is diffuse. Note that by the proof of Theorem \ref{theorem-general}, $M_1$ is also globally invariant under the modular automorphism group $\sigma^\psi$. Next, choose a diffuse abelian von Neumann subalgebra with separable predual $A \subset (Q \cap M_1)^\psi$.  

Let $x \in Q$ be any element. Denote by $Q_0 \subset M$ the von Neumann subalgebra generated by the set $\{ \sigma_t^\psi(y) : t \in \R, \, y = x \text{ or } y \in A\}$. Observe that $Q_0 \subset M$ has separable predual and is globally invariant under the modular automorphism group $\sigma^\psi$. Since $Q$ is amenable and $Q_0 \subset Q$ is with expectation, it follows that $Q_0$ is also amenable. (It is true even in the non-separable case that amenability implies injectivity. See \cite{Co76}.) Since $A \subset (Q_0 \cap M_1)^\psi$ and since $A$ is diffuse, $(Q_0 \cap M_1)^\psi$ is diffuse and so is $Q_0 \cap M_1$ (see e.g.\ \cite[Theorem IV.2.2.3]{Bl06}).

Since $Q_0$ is diffuse, amenable and with separable predual, the central sequence algebra $Q_0' \cap Q_0^\omega$ is diffuse (see e.g.\ \cite[Proposition 2.6]{Ho14}). Since $Q_0 \subset M$ is with expectation, the inclusion $Q_0' \cap Q_0^\omega \subset Q_0' \cap M^\omega$ is with expectation and hence $Q_0' \cap M^\omega$ is diffuse. Since $Q_0 \cap M_1$ is moreover diffuse and with expectation, we obtain that $Q_0 \subset M_1$ by Theorem \ref{thmA} and hence $x \in M_1$. Since this holds true for all $x \in Q$, we deduce $Q \subset M_1$.
\end{proof}

\appendix

\section{Bicentralizer problem for free product von Neumann algebras}\label{appendix} Let $(M, \varphi)$ be any $\sigma$-finite von Neumann algebra endowed with a faithful normal state. Following \cite{Ha85}, the {\em asymptotic centralizer} of $\varphi$ is defined by 
$$\AC(M, \varphi) = \left\{ (x_n)_n \in \ell^\infty(\N, M) : \lim_{n \to \infty} \|x_n \varphi - \varphi x_n\| = 0 \right\}$$
and the {\em bicentralizer} of $\varphi$ is defined by 
$$\Bic(M, \varphi) = \left\{ a \in M : \lim_{n \to \infty} \|a x_n - x_n a\|_\varphi = 0, \forall (x_n)_n \in \AC(M, \varphi) \right\}.$$
Haagerup showed in \cite{Ha85} that any amenable type ${\rm III_1}$ factor with separable predual has trivial bicentralizer. It is an open problem, known as Connes's bicentralizer problem, to decide whether any type ${\rm III_1}$ factor with separable predual has trivial bicentralizer.

It was recently showed in \cite[Proposition 3.3]{HI15} that $\Bic(M, \varphi) = ((M^\omega)^{\varphi^\omega})' \cap M$ for every nonprincipal ultrafilter $\omega \in \beta(\N) \setminus \N$. Using this characterization, we give a short proof of an unpublished result due to the second named author showing that Connes's bicentralizer problem has a positive solution for all type ${\rm III_1}$ free product factors.

For each $i \in \{1, 2\}$, let $(M_i, \varphi_i)$ be any nontrivial $\sigma$-finite von Neumann algebra endowed with a faithful normal state. Assume moreover that $\ker(\sigma^{\varphi_1})\cap\ker(\sigma^{\varphi_2}) = \{0\}$. Denote by $(M, \varphi) = (M_1, \varphi_1) \ast (M_2, \varphi_2)$ the  free product. By \cite[Theorem 4.1]{Ue10}, we have $M = M_c \oplus M_d$ where $M_c$ is a type ${\rm III_1}$ factor and $M_d = 0$ or $M_d$ is a multimatrix algebra. Put $\varphi_c = \frac{1}{\varphi(1_{M_c})} \varphi|_{M_c}$.

\begin{thm}\label{bicentralizer}
Keep the same notation as above. Then $\Bic(M_c, \varphi_c) = \C 1_{M_c}$.
\end{thm}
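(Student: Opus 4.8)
The plan is to use the characterization $\Bic(N,\phi)=((N^\omega)^{\phi^\omega})'\cap N$, valid for any $\sigma$-finite von Neumann algebra $N$ with faithful normal state $\phi$ (see \cite[Proposition 3.3]{HI15}). Since $1_{M_c}$ is a $\varphi^\omega$-invariant central projection of $M^\omega$, we have $(M_c^\omega)^{\varphi_c^\omega}=1_{M_c}(M^\omega)^{\varphi^\omega}1_{M_c}$, hence $\Bic(M_c,\varphi_c)=1_{M_c}\bigl(((M^\omega)^{\varphi^\omega})'\cap M\bigr)1_{M_c}$, so it suffices to show that every $a\in((M^\omega)^{\varphi^\omega})'\cap M$ satisfies $1_{M_c}\,a\,1_{M_c}\in\C 1_{M_c}$. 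The first step is to observe that $M_1^\omega$ and $M_2^\omega$ are freely independent inside $(M^\omega,\varphi^\omega)$: given a reduced word $x_1\cdots x_n$ with $x_j\in M_{i_j}^\omega$, $\varphi_{i_j}^\omega(x_j)=0$ and $i_1\neq\cdots\neq i_n$, one lifts each $x_j$ to a representing sequence and recenters it --- changing it by an element of $\mathcal I_\omega(M)$ --- so that each entry lies in $M_{i_j}^\circ$; freeness of $M_1,M_2$ in $(M,\varphi)$ then gives $\varphi^\omega(x_1\cdots x_n)=\lim_{k\to\omega}\varphi(x_1^{(k)}\cdots x_n^{(k)})=0$. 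Consequently $\widehat M:=M_1^\omega\vee M_2^\omega$, equipped with $\widehat\varphi:=\varphi^\omega|_{\widehat M}$, is canonically the free product $(M_1^\omega,\varphi_1^\omega)\ast(M_2^\omega,\varphi_2^\omega)$; moreover $M\subseteq\widehat M$, each $M_i^\omega$ is globally invariant under $\sigma^{\widehat\varphi}$, and the $\widehat\varphi$-preserving conditional expectation $\rE_{M_i^\omega}\colon\widehat M\to M_i^\omega$ restricts on $M$ to $\rE_{M_i}$.

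Fix $a\in((M^\omega)^{\varphi^\omega})'\cap M\subseteq\widehat M$ and set $P_i:=(M_i^\omega)^{\varphi_i^\omega}=(M^\omega)^{\varphi^\omega}\cap M_i^\omega$, so that $a$ commutes with $P_1$ and with $P_2$, and $P_i\neq\C 1$ by Proposition \ref{proposition-centralizer-ultraproduct}(1). Suppose first that $P_1$ is diffuse. Since $P_1\subseteq M_1^\omega$ is diffuse and with expectation, Proposition \ref{proposition-control}(1), applied inside the free product $\widehat M$ with $Q=P_1$, $n=1$ and $\pi=\id$, shows that every unitary of $P_1'\cap\widehat M$ lies in $M_1^\omega$; as $P_1'\cap\widehat M$ is generated by its unitaries, $a\in P_1'\cap\widehat M\subseteq M_1^\omega$, and therefore $a=\rE_{M_1^\omega}(a)=\rE_{M_1}(a)\in M_1$. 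Choosing $b\in P_2$ with $\varphi_2^\omega(b)=0$ and $b\neq 0$ (possible because $P_2\neq\C 1$) and writing $a=\varphi_1(a)1+a^\circ$ with $\varphi_1(a^\circ)=0$, the relation $a^\circ b=ba^\circ$ equates the reduced word $a^\circ b$, of type $M_1^\omega M_2^\omega$, with the reduced word $ba^\circ$, of type $M_2^\omega M_1^\omega$; these lie in mutually orthogonal subspaces of $\rL^2(\widehat M)$, so by freeness $\|a^\circ\|_{\varphi_1}^2\,\|b\|_{\varphi_2^\omega}^2=\|a^\circ b\,\xi_{\widehat\varphi}\|_{\rL^2(\widehat M)}^2=0$, whence $a^\circ=0$ and $a\in\C 1$. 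The case where $P_2$ is diffuse is identical by symmetry.

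The remaining case, where neither $P_1$ nor $P_2$ is diffuse, is where I expect the main difficulty to lie. Here Proposition \ref{proposition-centralizer-ultraproduct}(2), in the contrapositive form that a diffuse $M_i$ has a diffuse centralizer in its ultrapower, shows that both $M_1$ and $M_2$ have a minimal projection; combined with the standing hypothesis $\ker(\sigma^{\varphi_1})\cap\ker(\sigma^{\varphi_2})=\{0\}$, a short structural check rules out that $P_1$ and $P_2$ are both isomorphic to $\C^2$ with the uniform trace, so that $P_1\ast P_2\subseteq(M^\omega)^{\varphi^\omega}$ is never the amenable free product $\C^2\ast\C^2$ and always contains a diffuse von Neumann subalgebra $D$ with $D\npreceq_{\widehat M}M_1^\omega$ and $D\npreceq_{\widehat M}M_2^\omega$. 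Since $a$ commutes with all of $P_1\ast P_2$, and in particular with $D$, a finer analysis of the relative commutant of $P_1\ast P_2$ inside $\widehat M=M_1^\omega\ast M_2^\omega$ --- by an $\rL^2$-estimate on reduced words in the spirit of the proof of Proposition \ref{proposition-control}(1) applied to $D$, together with free-product techniques à la \cite{IPP05}, or alternatively a deformation/rigidity argument in the spirit of Corollary \ref{deformation/rigidity-AFP} --- should confine $a$ to the finite-dimensional algebra $\mathcal Z(P_1\ast P_2)\subseteq(M^\omega)^{\varphi^\omega}$; since $1_{M_c}\,a\,1_{M_c}$ then both lies in and commutes with the type ${\rm II}_1$ factor $(M_c^\omega)^{\varphi_c^\omega}$ (Ando--Haagerup \cite[Theorem 4.20]{AH12}), it is a scalar multiple of $1_{M_c}$, which finishes the proof. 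Controlling this relative commutant when the $P_i$ are atomic is the one genuinely delicate point; the diffuse cases treated above are comparatively soft consequences of Proposition \ref{proposition-control}(1).
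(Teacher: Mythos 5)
Your treatment of the cases where one of $P_i=(M_i^\omega)^{\varphi_i^\omega}$ is diffuse is essentially the paper's argument: free independence of $M_1^\omega$ and $M_2^\omega$ inside $(M^\omega,\varphi^\omega)$, Proposition \ref{proposition-control} (1) to push $a$ into $M_1^\omega\cap M=M_1$, and orthogonality of the reduced words $a^\circ b$ and $ba^\circ$ to kill $a^\circ$ (the paper phrases this last step with an invertible $w\in P_2$ and the identity $\varphi^\omega(w^*y^*yw)=\varphi^\omega(w^*y^*wy)=0$, but it is the same computation). The divergence, and the genuine gap, is in how the remaining configurations are handled.

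The paper never meets your third case. It first disposes of the situation where both $M_1$ and $M_2$ are atomic by quoting \cite[Theorem 2.2]{Ue11}: there $\varphi_c$ is almost periodic and already $((M_c)^{\varphi_c})'\cap M_c^\omega=\C 1_{M_c}$, which contains the bicentralizer. In every other configuration some $M_i$, say $M_1$, has a diffuse direct summand, and the paper cuts $M$ down by the central projection of $M_1$ supporting that summand (using \cite[Lemma 2.2]{Ue10} and the fact that $M_c$ is a type ${\rm III}$ factor, so that triviality of the bicentralizer can be tested in such a corner); after this reduction $M_1$ is diffuse, hence $P_1$ is diffuse by Proposition \ref{proposition-centralizer-ultraproduct} (2), and one is back in your first case. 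Your third case, by contrast, is a program rather than a proof, and its key assertions are unsubstantiated: you do not show that $P_1\ast P_2$ always contains a diffuse $D$ with $D\npreceq_{\widehat M}M_i^\omega$ for both $i$; the ``short structural check'' is not given, and ruling out that both $P_i$ are $\C^2$ with the uniform trace would not by itself yield what you need (tracial free products of two two-dimensional abelian algebras with non-uniform weights are still amenable up to atoms); and $\mathcal Z(P_1\ast P_2)$ need not be finite dimensional --- for uniform weights it is diffuse, as in $\rL(\Z/2\ast\Z/2)$ --- so the final confinement step is not even correctly stated. Since the case does occur (take $M_1=M_2=\mathbf M_2(\C)\oplus\rL^\infty[0,1]$ with suitable states: neither $P_i$ is diffuse and neither $M_i$ is atomic), the proposal as written does not prove the theorem. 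The missing ingredient is precisely the paper's reduction to the diffuse situation via \cite{Ue11} in the atomic case and the corner-cutting argument otherwise.
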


\begin{proof}
In the case when both $M_1$ and $M_2$ are atomic, $\varphi_c$ is an almost periodic state such that $((M_c)^{\varphi_c})' \cap M_c^\omega = \C 1_{M_c}$ by \cite[Theorem 2.2]{Ue11}. Then we have $\Bic(M_c, \varphi_c) \subset ((M_c)^{\varphi_c})' \cap M_c= \C 1_{M_c}$.

Next, we may assume that $M_1$ has a diffuse direct summand. Since $M_c$ is of type ${\rm III}$ and using \cite[Lemma 2.2]{Ue10}, up to cutting down $M$ by the central projection in $M_1$ that supports the diffuse direct summand of $M_1$, we may assume without loss of generality that $M_1$ is diffuse. In that case, we have $M = M_c$. Observe that $M_1^\omega$ and $M_2^\omega$ are both globally invariant under the modular automorphism group $\sigma^{\varphi^\omega}$ and are $\ast$-free inside $M^\omega$ with respect to the state $\varphi^\omega$ (see \cite[Proposition 4]{Ue00}). Letting $P = M_1^\omega \vee M_2^\omega$, we have $(P, \varphi^\omega|_P ) \cong (M_1^\omega, \varphi_1^\omega) \ast (M_2^\omega, \varphi_2^\omega)$ and $M \subset P \subset M^\omega$.

Since $M_1$ is diffuse and $M_2 \neq \C 1$, we have that $(M_1^\omega)^{\varphi_1^\omega}$ is diffuse and $(M_2^\omega)^{\varphi_2^\omega} \neq \C 1$ by Proposition \ref{proposition-centralizer-ultraproduct}. Using \cite[Proposition 3.3]{HI15} and Proposition \ref{proposition-control} (1), we have
$$\Bic(M, \varphi) = ((M^\omega)^{\varphi^\omega})' \cap M \subset ((M_1^\omega)^{\varphi_1^\omega})' \cap P \cap  M \subset ((M_1^\omega)^{\varphi_1^\omega})' \cap M_1^\omega \cap M \subset M_1.$$
Next, one can choose an invertible element $w \in (M_2^\omega)^{\varphi_2^\omega}$ such that $\varphi_2^\omega(w) = 0$. For all $y \in \Bic(M, \varphi) \subset M_1$ such that $\varphi_1(y) = 0$, using the freeness with respect to $\varphi^\omega$ and since $yw = wy$, we have 
$$
\varphi^\omega(w^* y^*y w) = \varphi^\omega(w^* y^* w y) = 0.
$$
Therefore $w^* y^*yw = 0$ and hence $y = 0$ since $w$ is invertible. It immediately follows that $\Bic(M, \varphi) = \C 1$. This finishes the proof of Theorem \ref{bicentralizer}.
\end{proof}

\bibliographystyle{plain}

\end{document}